\DeclareSymbolFont{largesymbol}{OMX}{yhex}{m}{n}
\DeclareMathAccent{\Widehat}{\mathord}{largesymbol}{"62}
\definecolor{verylight}{gray}{0.97}
\definecolor{light}{gray}{0.9}
\definecolor{medium}{gray}{0.85}
\definecolor{dark}{gray}{0.6}
 \def\G{{\mathcal G}}
 \def\supp{{\mathcal supp}}
 \def\opn#1#2{\def#1{\operatorname{#2}}} 
 \opn\chara{char} \opn\length{\ell} \opn\pd{pd} \opn\rk{rk}
 \opn\projdim{proj\,dim} \opn\injdim{inj\,dim} \opn\rank{rank}
 \opn\depth{depth} \opn\grade{grade} \opn\height{height}
 \opn\embdim{emb\,dim} \opn\codim{codim}
 \opn\Tr{Tr} \opn\bigrank{big\,rank}
 \opn\superheight{superheight}\opn\lcm{lcm}
 \opn\trdeg{tr\,deg}
 \opn\reg{reg} \opn\lreg{lreg} \opn\ini{in} \opn\lpd{lpd}
 \opn\size{size} \opn\sdepth{sdepth}
 \opn\link{link}\opn\fdepth{fdepth}\opn\lex{lex}
 \opn\tr{tr}
 \opn\type{type}
 \opn\Borel{Borel}
\opn\cdeg{cdeg}
 \opn\div{div} \opn\Div{Div} \opn\cl{cl} \opn\Cl{Cl}
 \opn\Spec{Spec} \opn\Supp{Supp} \opn\supp{supp} \opn\Sing{Sing}
 \opn\Ass{Ass} \opn\Min{Min}\opn\Mon{Mon}
 \opn\Ann{Ann} \opn\Rad{Rad} \opn\Soc{Soc}
 \opn\Im{Im} \opn\Ker{Ker} \opn\Coker{Coker} \opn\Am{Am}
 \opn\Hom{Hom} \opn\Tor{Tor} \opn\Ext{Ext} \opn\End{End}
 \opn\Aut{Aut} \opn\id{id}
 \opn\nat{nat}
 \opn\pff{pf}
 \opn\Pf{Pf} \opn\GL{GL} \opn\SL{SL} \opn\mod{mod} \opn\ord{ord}
 \opn\Gin{Gin} \opn\Hilb{Hilb}\opn\sort{sort}
 \opn\PF{PF}\opn\Ap{Ap}
 \opn\aff{aff} \opn
\opn\relint{relint} \opn\st{st}
 \opn\lk{lk} \opn\cn{cn} \opn\core{core} \opn\vol{vol}  \opn\inp{inp} \opn\nilpot{nilpot}
 \opn\link{link} \opn\star{star}\opn\lex{lex}\opn\set{set}
 \opn\width{wd}
 \opn\Fr{F}
 \opn\QF{QF}
 \opn\G{G}
 \opn\type{type}\opn\res{res}
 \opn\gr{gr}
  \def\cdeg{deg}
 \def\pot#1#2{#1[\kern-0.28ex[#2]\kern-0.28ex]}
 \opn\dirlim{\underrightarrow{\lim}}
 \opn\inivlim{\underleftarrow{\lim}}
 \def\Implies{\ifmmode\Longrightarrow \else
         \unskip${}\Longrightarrow{}$\ignorespaces\fi}
 \def\implies{\ifmmode\Rightarrow \else
         \unskip${}\Rightarrow{}$\ignorespaces\fi}
 \def\iff{\ifmmode\Longleftrightarrow \else
         \unskip${}\Longleftrightarrow{}$\ignorespaces\fi}
 \newtheorem{Theorem}{Theorem}[section]
 \newtheorem{Lemma}[Theorem]{Lemma}
 \newtheorem{Corollary}[Theorem]{Corollary}
 \newtheorem{Remark}[Theorem]{Remark}
 \newtheorem{Example}[Theorem]{Example}
 \newtheorem{Definition}[Theorem]{Definition}
 \let\epsilon\varepsilon
 \let\kappa=\varkappa
 \def\qed{\ifhmode\textqed\fi
       \ifmmode\ifinner\quad\qedsymbol\else\dispqed\fi\fi}
 \def\textqed{\unskip\nobreak\penalty50
        \hskip2em\hbox{}\nobreak\hfil\qedsymbol
        \parfillskip=0pt \finalhyphendemerits=0}
 \def\dispqed{\rlap{\qquad\qedsymbol}}
 \opn\dis{dis}
 \def\pnt{{\raise0.5mm\hbox{\large\bf.}}}
 \opn\Lex{Lex}
\begin{document}
\title {Regularity and projective dimension  of  powers of  edge ideal of the disjoint union of some weighted oriented gap-free bipartite graphs}

\author {Guangjun Zhu$^{^*}$\!\!\!, Li Xu, Hong Wang and Jiaqi Zhang }

\address{Authors¡¯ address:  School of Mathematical Sciences, Soochow
University, Suzhou 215006, P.R. China}
\email{zhuguangjun@suda.edu.cn(Corresponding author:Guangjun Zhu),
\linebreak[4] 1240470845@qq.com(Li Xu), 651634806@qq.com(Hong Wang),\nolinebreak[2] zjq7758258@vip.qq.com(Jiaqi Zhang).}

\dedicatory{ }

\begin{abstract}
In this paper we provide some precise formulas for regularity of powers
of edge ideal of the disjoint union of some weighted oriented gap-free bipartite graphs. For the projective dimension of such an edge ideal, we give its exact formula. Meanwhile, we also give the upper and lower bounds of projective dimension of higher power of such edge ideals. Some examples show that these formulas are related to direction selection.
\end{abstract}

\thanks{* Corresponding author}

\subjclass[2010]{ Primary: 13C10,13F20; Secondary 05C20, 05C22, 05E40.}


\keywords{regularity, projective dimension, powers of the edge ideal, weighted oriented gap-free bipartite graph}

\maketitle

\setcounter{tocdepth}{1}

\section{Introduction}

\hspace{3mm} An oriented graph $D=(V(D),E(D))$ consists of an underlying simple graph $G$ on which
each edge is given an orientation (i.e., a directed graph without multiple edges nor loops).
 If $\{u,v\}\in E(D)$ is an edge, we write $uv$ for $\{u,v\}$, which is denoted to be the directed edge
where the direction is from $u$ to $v$ and $u$ (resp. $v$) is called the {\em starting}  point (resp. the {\em ending} point).
 A vertex-weighted (or simply, weighted) oriented graph is a triplet $D=(V(D), E(D),w)$, where  $w$ is a weight function $w: V(D)\rightarrow \mathbb{N}^{+}$, where $N^{+}=\{1,2,\ldots\}$.
Some times for short we denote the vertex set $V(D)$ and edge set $E(D)$
by $V$ and $E$ respectively. For any $x_i\in V$, its weight $w(x_i)$ is  denoted by $w_i$ or $w_{x_i}$.

 Let $D=(V,E,w)$ be a weighted  oriented graph with  vertex set $V=\{x_{1},\ldots,x_{n}\}$. We consider the polynomial ring $S=k[x_{1},\dots, x_{n}]$ in $n$ variables over a field $k$. The edge ideal of $D$,  denoted by $I(D)$, is the ideal of $S$ given by
$$I(D)=(x_ix_j^{w_j}\mid  x_ix_j\in E).$$

According to the above definition, the edge ideal $I(D)$ of $D$ is independent of the number of its isolated vertices, we shall always assume that  $D$ has no isolated vertices throughout this paper.
Edge ideals of weighted  oriented graphs arose in the theory of Reed-Muller codes as initial ideals of vanishing ideals
of projective spaces over finite fields \cite{MPV,PS}.
If a vertex $x_i$ of $D$ is a source (i.e., has only arrows leaving $x_i$) we shall always
assume $w_i=1$ because in this case the definition of $I(D)$ does not depend on the
weight of $x_i$.  If  $w_j=1$ for all $j$, then $I(D)$ recovers the usual edge ideal of its (undirected) underlying graph.
Edge ideals of (undirected) graphs have been investigated extensively in the literature \cite{AB,ABS,B1,B2,BBH1,BBH2,BHT,FM,HH3,JNS,MSY,M,RJNP}.
In general, edge ideals of weighted oriented graphs are different from edge ideals of edge-weighted (undirected)
graphs defined by Paulsen and Sather-Wagstaff \cite{PS}.

Let $G$  be a graph without isolated vertices, we say that its two disjoint edges $uv$ and $xy$ form a gap
 if $G$  has no edge with one endpoint in $\{u, v\}$ and the other in $\{x, y\}$.
A graph  without gaps is called gap-free. Equivalently, a graph  $G$ is gap-free if and only if its complement $G^c$
contains no induced $C_4$. Thus, $G$ is gap-free if and only if it does not contain two vertex-disjoint edges as
an induced subgraph. A bipartite graph  is a graph whose vertices can be partitioned into two disjoint $X$, $Y$ such that any edge connects a vertex in $X$ to one in $Y$. Vertex sets $X$, $Y$  are usually called a bipartition of this graph. A complete bipartite graph  is a bipartite graph with  bipartition $X$, $Y$,  every vertex in $X$ is joined to every vertex in $Y$. A star graph
is a complete bipartite graph  with  bipartition $X$, $Y$ such that $|X|=1$ or $|Y |=1$. A  graph  is called gap-free bipartite  graph if it is not only a bipartite graph, but also gap-free.

Every concept that is valid for graphs automatically applies to  oriented graphs too.
For example, the degree of a vertex $x$ in an oriented graph $D$, denoted $d(x)$, is simply the degree of $x$ in its underlying  graph. Likewise, an oriented graph is said to be connected if
its underlying graph is connected; an oriented graph is said to be a gap-free bipartite graph with bipartition $X$, $Y$ if
its underlying graph is a gap-free bipartite graph with bipartition $X$, $Y$.

For a homogeneous ideal  $I\subset S$, its regularity and projective dimension are two central invariants associated to $I$. It is well known that
 $\reg(I^t)$ is asymptotically a linear function for $t\gg  0$, i.e., there exist constants $a$ and $b$ such that for all $t\gg 0$, $\mbox{reg}\,(I^t)=at+b$ (see \cite{CHT,K,TW}). The coefficient $a$ is well-understood, the constants $b$ and $t_0$ are quite mysterious. In this regard, there has been an interest in finding the exact form of the linear
function and determining the stabilization index $t_0$ where $\reg(I^t)$ becomes linear (cf. \cite{AB,ABS,B1,B2,BBH1,BBH2,BHT,JNS,MSY,RJNP}). It turns out that even in the case of monomial ideals it is challenging
to find the linear function and $t_0$ (cf. \cite{Con}).
 In \cite{B3}, Brodmann  showed
that  $\mbox{depth}\,(S/I^t)$ is a constant for $t\gg 0$, and this constant is bounded
above by $n-\ell(I)$, where $\ell(I)$ is the analytic
spread of $I$.  It is shown  in \cite[Theorem 1.2]{HH3} that $\mbox{depth}\,(S/I^t)$ is a nonincreasing
function of $t$ when all powers of $I$ have a linear resolution
and conditions are given in that paper under which all powers of $I$ will
have linear quotients. By Auslander-Buchsbaum formula, we obtain the projective dimension $\mbox{pd}\,(S/I^t)$ is a constant for $t\gg 0$.
In this regard, there has been an interest in determining the smallest value $t_0$ such that $\mbox{pd}\,(S/I^t)$ is a constant for all $t\geq t_ 0$.
 (see  \cite{FM,HH3,JNS,M}).

In this paper, we consider the edge ideal $I(D)$ of a weighted oriented graph $D$. In this case, there exist integers $b$ and $t_0$ such that $\mbox{reg}\,(I^t)=(w+1)t+b$
for all $t\geq t_0$,  where $w=\mbox{max}\,\{w(x)\mid x\in V(D)\}$. We also derive some exact formulas for  projective
dimension and regularity of powers of the edge ideals of some weighted oriented graphs (see, for example, \cite{Z3,Z4,Z5,Z6,Z7}). To the best of our knowledge, there is a few results about  projective
dimension  and regularity of powers of the edge ideal  for a  weighted oriented graph. Our objective in this paper is to express projective dimension  and regularity in terms of combinatorial
invariants of a weighted oriented graph, which is the disjoint union of some   gap-free  bipartite graphs.

 For  a positive integer $\ell$, we set $[\ell]=\{1,\ldots,\ell\}$. Our main results are as follows:
\begin{Theorem}
Let $s$ be a positive integer and $D=\coprod\limits_{i=1}^{s}D_{i}$  the disjoint union of  weighted oriented gap-free bipartite graphs $D_i$ with bipartition $X_i,Y_i$ for all $i\in [s]$.
Let $b_i=\mbox{max}\,\{|X_i|,|Y_i|\}$ and  $N_{D_i}^{+}(x_{i_j})=\{y_{i_1},\ldots,y_{i_{k_j}}\}$ for any $x_{i_j}\in X_i$. If the orientation of $D_i$ is all its edges are directed away from $X_i$,
then
\begin{itemize}
\item[(1)] $\mbox{pd}\,(I(D))=\sum\limits_{i=1}^{s}r_{D_i}+s-1,$
where $r_{D_i}=\mbox{max}\,\{d(x_{i_j})+d(y_{i_{k_j}})\mid x_{i_j}\in X_i\}-2$,
\item[(2)]  $\!\sum\limits_{i=1}^{s}b_i-1\leq\mbox{pd}\,(I(D)^t)\leq|V|-\!s-1$ for $t\geq 1$, where $V$ is the  vertex set of $D$.
\end{itemize}
Furthermore, $\mbox{pd}\,(I(D)^t)$ attains this upper bound if  every  $D_{i}$ is a complete bipartite graph.
\end{Theorem}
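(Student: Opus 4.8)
The plan is to reduce every assertion about the disjoint union $D=\coprod_{i=1}^{s}D_{i}$ to the corresponding statement about a single component $D_{i}$, for which the needed projective dimension and depth data come from the single-graph analysis. Write $S_{i}=k[X_{i}\cup Y_{i}]$ and $S=S_{1}\otimes_{k}\cdots\otimes_{k}S_{s}$, so that $I(D)=\sum_{i=1}^{s}I(D_{i})S$ is a sum of ideals supported on pairwise disjoint variable sets.

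For part (1), since $S/I(D)\cong\bigotimes_{i=1}^{s}S_{i}/I(D_{i})$ and the tensor product is over the field $k$, the K\"unneth formula for $\Tor$ gives $\Tor^{S}_{n}(S/I(D),k)\cong\bigoplus_{p_{1}+\cdots+p_{s}=n}\bigotimes_{i=1}^{s}\Tor^{S_{i}}_{p_{i}}(S_{i}/I(D_{i}),k)$, whence $\pd(S/I(D))=\sum_{i=1}^{s}\pd(S_{i}/I(D_{i}))$. Passing to the ideal via $\pd(I(D))=\pd(S/I(D))-1$ and $\pd(S_{i}/I(D_{i}))=\pd(I(D_{i}))+1$ yields $\pd(I(D))=\sum_{i=1}^{s}\pd(I(D_{i}))+s-1$. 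It then remains only to invoke the single-component identity $\pd(I(D_{i}))=r_{D_{i}}$ for a weighted oriented gap-free bipartite graph with all edges directed away from $X_{i}$, which I would establish (or cite from the preceding single-graph results) by polarizing $I(D_{i})$ and reading off the top nonvanishing $\Tor$ from the resulting squarefree bipartite object; the degree sum $d(x_{i_{j}})+d(y_{i_{k_{j}}})$ is exactly the size of the largest neighborhood block that controls it.

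For part (2), I would use the depth formula for powers of a sum of ideals in disjoint variables (H\`a--Trung--Trung); iterating the two-factor case gives
$$\depth\frac{S}{I(D)^{t}}=\min_{\substack{a_{1}+\cdots+a_{s}=t+s-1\\ a_{i}\ge 1}}\ \sum_{i=1}^{s}\depth\frac{S_{i}}{I(D_{i})^{a_{i}}}.$$
Combined with the Auslander--Buchsbaum identity $\pd(I(D)^{t})=|V|-\depth(S/I(D)^{t})-1$, the two inequalities follow from two single-component bounds valid for every $a\ge 1$: first, $\depth(S_{i}/I(D_{i})^{a})\ge 1$, which forces the minimum above to be $\ge s$ and hence $\pd(I(D)^{t})\le|V|-s-1$; second, $\depth(S_{i}/I(D_{i})^{a})\le\min\{|X_{i}|,|Y_{i}|\}$, which makes every summand, and therefore the minimum, at most $\sum_{i}\min\{|X_{i}|,|Y_{i}|\}=|V|-\sum_{i}b_{i}$, giving $\pd(I(D)^{t})\ge\sum_{i}b_{i}-1$. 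The first bound amounts to showing $\mm_{i}\notin\Ass(S_{i}/I(D_{i})^{a})$, which I expect to follow from a normally-torsion-free / bipartite-polarization argument, since bipartiteness should prevent the maximal ideal from becoming associated to the powers.

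Finally, for the equality clause, suppose each $D_{i}=K_{m_{i},n_{i}}$ is complete bipartite. Then every edge realizes $d(x)+d(y)=m_{i}+n_{i}$, so $r_{D_{i}}=m_{i}+n_{i}-2$ and part (1) already gives $\pd(I(D))=\sum_{i}(m_{i}+n_{i}-2)+s-1=|V|-s-1$; that is, the bound is attained at $t=1$, equivalently $\depth(S_{i}/I(D_{i}))=1$. The point to prove is that this persists, namely $\depth(S_{i}/I(D_{i})^{a})=1$ for all $a\ge 1$ when $D_{i}$ is complete bipartite. Granting this, every admissible distribution $(a_{1},\dots,a_{s})$ makes each summand equal to $1$, so the minimum in the displayed formula is exactly $s$ and $\pd(I(D)^{t})=|V|-s-1$ for all $t$. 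I anticipate the main obstacle to lie precisely in these single-component depth statements---establishing $\depth(S_{i}/I(D_{i})^{a})\ge 1$ uniformly in $a$, and pinning the complete-bipartite value to $1$ for all powers---since, unlike the $t=1$ case handled cleanly by K\"unneth, the behavior of $\depth(S_{i}/I(D_{i})^{a})$ under weighting and orientation is the delicate ingredient, whereas the disjoint-union bookkeeping is then routine.
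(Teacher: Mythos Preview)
Your treatment of part~(1) is correct and essentially the paper's argument: the paper uses Lemma~2.2 (additivity of $\pd$ for sums in disjoint variables), which is the K\"unneth statement you wrote out, together with the single-component formula $\pd(I(D_i))=r_{D_i}$ established earlier as Theorem~3.6.

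For part~(2) your route is genuinely different from the paper's. The paper does \emph{not} reduce to single components via a depth-of-powers-of-sums formula; instead it proves the upper bound $\pd(I(D)^t)\le |V|-s-1$ directly for arbitrary $s$ by induction on $t$ and $|V(D)|$, peeling off one vertex at a time through short exact sequences built from the colons $(I(D)^t:y_1^{w_{y_1}})$ and $(K_p:x_{p+1})$, and using the key identity $(I(D)^t:x_{i_1}y^{w_y})=I(D)^{t-1}$ (Lemma~4.1/Corollary~4.2). The lower bound $\sum_i b_i-1$ is obtained from the same colon identity together with $\pd(I)\ge\pd(I:f)$, iterating down to $t=1$ where Corollary~3.8 applies. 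The complete-bipartite equality is tracked through the same induction.

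Your H\`a--Trung--Trung reduction is attractive, but it has a real gap precisely where you suspected. The claim $\depth(S_i/I(D_i)^a)\ge 1$, i.e.\ $\mm_i\notin\Ass(S_i/I(D_i)^a)$, does \emph{not} follow from a normally-torsion-free or polarization argument: $I(D_i)$ is not squarefree once any weight exceeds $1$, and polarization does not commute with taking powers, so the Simis--Vasconcelos--Villarreal result for bipartite edge ideals does not transfer. Likewise you give no argument for $\depth(S_i/I(D_i)^a)\le\min\{|X_i|,|Y_i|\}$ or for the complete-bipartite value. These single-component facts are exactly what the paper's exact-sequence induction establishes (Theorem~4.4 and Corollary~4.5 with $s=1$), so your wrapper would ultimately have to import the paper's method to supply its own inputs. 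Also, be careful with the iterated depth formula you wrote: the two-factor H\`a--Trung--Trung identity involves terms of both shapes $\depth A/I^i+\depth B/J^{n-i+1}$ and $\depth A/I^i+\depth B/J^{j}+1$ with $i+j\le n$, so the clean $a_1+\cdots+a_s=t+s-1$ version you stated needs justification before it can be used as an equality.
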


\begin{Corollary}
 Let $D=(V, E, w)$  be a weighted oriented graph as the above theorem. If there exists some $t_0$ such that $\mbox{pd}\,(I(D)^{t_0})=|V|-s-1$, then
$$\mbox{pd}\,(I(D)^t)=|V|-s-1 \ \ \text{for any}\ \  t\geq t_0.$$
\end{Corollary}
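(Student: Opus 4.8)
The plan is to pass from projective dimension to depth and then to exploit the monotonic behaviour of the depth of the powers together with the upper bound already supplied by the Theorem. Since $S$ is a polynomial ring over a field we have $\depth(S)=|V|$, so the Auslander--Buchsbaum formula gives $\pd(S/I(D)^t)=|V|-\depth(S/I(D)^t)$, and the short exact sequence $0\to I(D)^t\to S\to S/I(D)^t\to 0$ yields $\pd(I(D)^t)=\pd(S/I(D)^t)-1=|V|-\depth(S/I(D)^t)-1$ for every $t\ge 1$. Under this dictionary the upper bound $\pd(I(D)^t)\le |V|-s-1$ of part~(2) of the Theorem is precisely the assertion that $\depth(S/I(D)^t)\ge s$ for all $t\ge 1$, while the hypothesis $\pd(I(D)^{t_0})=|V|-s-1$ says exactly that $\depth(S/I(D)^{t_0})=s$, the least value this depth is permitted to take.

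The decisive step is to show that the function $t\mapsto \depth(S/I(D)^t)$ is non-increasing. Granting this, the corollary follows at once: for every $t\ge t_0$ we obtain
\[ s\le \depth(S/I(D)^t)\le \depth(S/I(D)^{t_0})=s, \]
where the left-hand inequality is the lower bound noted above and the right-hand one is monotonicity, so that $\depth(S/I(D)^t)=s$ and hence $\pd(I(D)^t)=|V|-s-1$ for all $t\ge t_0$. No further input beyond the upper bound of the Theorem is then required, and the statement is consistent with Brodmann's eventual constancy of the depth function.

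It therefore remains to establish the non-increasing property of the depth, and this is where I expect the real work to lie, since the depth of powers of an arbitrary homogeneous ideal need not be monotone. Here I would use the two hypotheses on $D$ that already drive the Theorem: that every component $D_i$ is gap-free bipartite and that its edges all point away from $X_i$. One route is to prove that all powers $I(D)^t$ admit a linear resolution (equivalently, linear quotients), after which the result of Herzog and Hibi \cite[Theorem~1.2]{HH3} delivers monotonicity directly; because the weights make the generators of $I(D)$ inhomogeneous in the standard grading, this is most naturally carried out by passing to a polarization and reducing to the underlying gap-free bipartite graph, for which the linearity of powers is available. An alternative, more self-contained route is to run an induction on $t$ through the exact sequences $0\to I(D)^t/I(D)^{t+1}\to S/I(D)^{t+1}\to S/I(D)^t\to 0$ and to apply the depth lemma, which reduces monotonicity to the estimate $\depth\bigl(I(D)^t/I(D)^{t+1}\bigr)\ge \depth(S/I(D)^{t+1})+1$. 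Controlling the depth of these successive quotients from the bipartite-plus-orientation structure is, in either approach, the main obstacle.
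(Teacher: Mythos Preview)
Your overall architecture is right and matches the paper: the upper bound $\mbox{pd}\,(I(D)^t)\le |V|-s-1$ comes from the Theorem, and what remains is that $\mbox{pd}\,(I(D)^t)$ is non-decreasing in $t$ (equivalently, $\mbox{depth}\,(S/I(D)^t)$ is non-increasing). The gap is that you have not actually proved this monotonicity, and neither of your proposed routes will close it. Route~1 fails outright: when some weight exceeds $1$ the generators $x_iy_j^{w_{y_j}}$ of $I(D)$ lie in different degrees, so $I(D)$ is not equigenerated and cannot have a linear resolution; polarization does not rescue this because it does not commute with taking powers---$(I(D)^t)^{\mathrm{pol}}$ is not $(I(D)^{\mathrm{pol}})^t$, nor is the polarization of $I(D)$ the edge ideal of the underlying simple graph. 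Route~2 you yourself flag as unfinished, and controlling $\mbox{depth}\,(I(D)^t/I(D)^{t+1})$ from scratch is at least as hard as the original problem.

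The paper bypasses all of this with a one-line argument tailored to the combinatorics already developed. Taking $x_{i_1}\in X_i$ of smallest degree and any $y\in N_D^+(x_{i_1})$, the gap-free hypothesis together with the orientation away from $X_i$ yields the colon identity
\[
(I(D)^t:x_{i_1}y^{w_y})=I(D)^{t-1}\qquad(t\ge 2),
\]
established earlier (Lemma~\ref{lem10} and Corollary~\ref{cor4}). Combining this with the general inequality $\mbox{pd}\,(I)\ge \mbox{pd}\,(I:f)$ for a monomial $f$ (Lemma~\ref{lem4}(1)) gives $\mbox{pd}\,(I(D)^t)\ge \mbox{pd}\,(I(D)^{t-1})$ directly, and induction from $t_0$ finishes. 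So the ``real work'' you anticipate is precisely this colon identity, and that is where the bipartite-plus-orientation structure enters---not through any linearity or depth-lemma machinery.
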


\begin{Theorem}
Let $D=(V,E,w)$ be a weighted oriented  bipartite  graph as Theorem $1.1$. Then
 \begin{itemize}
\item[(1)] $\mbox{reg}\,(I(D))=\sum\limits_{x\in V}w(x)-|V|+s+1$,
\item[(2)] $\mbox{reg}\,(I(D)^{t})=\mbox{reg}\,(I(D))+(t-1)(w+1)$ for any $t\geq 1$,
\end{itemize}
where $w=\mbox{max}\,\{w(x)\mid x \in V \}$.
\end{Theorem}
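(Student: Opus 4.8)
The plan is to reduce everything to a single component and then reassemble, exploiting that the polynomial rings attached to distinct $D_i$ involve disjoint variables. Write $S_i$ for the polynomial ring on $V(D_i)$, so that $S/I(D)=\bigotimes_{i=1}^{s}S_i/I(D_i)$. For $t=1$ this tensor decomposition gives the additivity $\reg(S/I(D))=\sum_{i=1}^{s}\reg(S_i/I(D_i))$, and for arbitrary $t$ I would invoke the known regularity formula for powers of a sum of ideals in pairwise disjoint sets of variables,
\[
\reg\big(S/I(D)^{t}\big)=\max\Big\{\textstyle\sum_{i=1}^{s}\reg(S_i/I(D_i)^{t_i})\ :\ \textstyle\sum_i t_i=t+s-1,\ t_i\ge 1\Big\}.
\]
Thus both assertions follow once the single-component quantities $\reg(I(D_i))$ and $\reg(I(D_i)^{t})$ are understood.

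For a single component, recall that a gap-free graph without isolated vertices is connected, and that a connected gap-free bipartite graph is a chain graph: after relabelling, $N(x_1)\supseteq N(x_2)\supseteq\cdots$ on $X_i$, so $x_1$ is adjacent to every vertex of $Y_i$. Since each $x\in X_i$ is a source it carries weight $1$, whence $\sum_{x\in V(D_i)}w(x)-|V(D_i)|=\sum_{y\in Y_i}(w(y)-1)=:W_i$. I would prove $\reg(S_i/I(D_i))=W_i+1$ by induction on $W_i$. When $W_i=0$ the ideal is the edge ideal of an unweighted chain graph, whose complement is chordal, so by Fr\"oberg's theorem $I(D_i)$ has a linear resolution and $\reg(S_i/I(D_i))=1$. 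If $W_i>0$, pick $y\in Y_i$ with $w(y)\ge 2$ and use the exact sequence $0\to S_i/(I(D_i):y)(-1)\to S_i/I(D_i)\to S_i/(I(D_i),y)\to 0$ given by multiplication by $y$. Here $(I(D_i):y)=I(D_i')$, where $D_i'$ is $D_i$ with the weight of $y$ lowered by one (total excess $W_i-1$), while $S_i/(I(D_i),y)$ is the edge-ideal ring of the connected chain graph $D_i\setminus y$ (total excess $W_i-(w(y)-1)\le W_i-1$). By induction the left-hand term has regularity $W_i+1$ and the quotient on the right has regularity $\le W_i$, so the short-exact-sequence regularity lemma forces $\reg(S_i/I(D_i))=W_i+1$. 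Summing over $i$ and adding $1$ yields part (1).

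The substantive point is the single-component power formula $\reg(I(D_i)^{t})=\reg(I(D_i))+(t-1)(w_i+1)$, where $w_i=\max\{w(x):x\in V(D_i)\}$; this is where the slope $w_i+1$, governed by the maximal weight, enters. For the lower bound I would track the graded Betti number produced by the maximal-degree generator $xy^{w_i}$ in $I(D_i)^{t}$ together with the contribution of $W_i$, or equivalently restrict to a suitable induced sub-configuration whose power regularity is computable. For the upper bound I would argue by induction on $t$ through a Banerjee-type analysis of the colon ideals $I(D_i)^{t}:m$, as $m$ ranges over the generators of $I(D_i)^{t-1}$, adapted to the weighted oriented setting, using that the nested (chain) neighbourhood structure keeps these colon ideals of the same shape so that each step raises the regularity by exactly $w_i+1$. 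The star $K_{1,q}$ is the calibrating model: there $I(D_i)=x\cdot(y_1^{w_1},\dots,y_q^{w_q})$ with $(y_1^{w_1},\dots,y_q^{w_q})$ a complete intersection in disjoint variables, so the disjoint-variable sum formula already yields the claim and fixes the constant.

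Finally I would combine. Substituting $\reg(S_i/I(D_i)^{t_i})=W_i+1+(t_i-1)(w_i+1)$ into the sum-of-powers formula and writing $u_i=t_i-1\ge 0$ with $\sum_i u_i=t-1$, the optimization $\max\sum_i u_i(w_i+1)$ is solved by placing all of $t-1$ on a component realizing $w=\max_i w_i$, giving $\reg(S/I(D)^{t})=\sum_{i=1}^{s}(W_i+1)+(t-1)(w+1)$. Since $\sum_i(W_i+1)=\sum_{x\in V}w(x)-|V|+s=\reg(S/I(D))$, this is precisely $\reg(S/I(D))+(t-1)(w+1)$, i.e.\ part (2), and $t=1$ recovers part (1). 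The main obstacle is the single-component power formula of the previous paragraph: the two reduction steps are formal, but controlling $\reg(I(D_i)^{t})$ for a general chain graph rather than a star requires the delicate colon-ideal induction and is the technical heart of the argument.
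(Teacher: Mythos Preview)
Your route differs from the paper's in two ways. First, you reduce to a single component using the powers-of-sums regularity formula (essentially the result of H\`a--N.V.~Trung--T.N.~Trung / Nguyen--Vu for ideals in disjoint variable sets), whereas the paper never isolates a single $D_i$ but runs a direct double induction on $t$ and $|V(D)|$ for the full disjoint union. Second, for part~(1) on a single component you induct on the total weight excess $W_i$ using multiplication by a \emph{linear} variable $y$, while the paper (Theorem~3.9) inducts on $|Y_i|$ using multiplication by the full power $y_1^{w_{y_1}}$. Both reductions are legitimate, and your reassembly-by-optimization correctly identifies that the maximum is attained by loading all of $t-1$ onto a component realizing the maximal weight.

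Two points deserve care. In your induction for part~(1), the short exact sequence only yields $\reg(S_i/I(D_i))\le W_i+1$; the equality clause of the regularity lemma does not apply when $w(y)=2$, since then $\reg(C)=W_i=\reg(A)-1$. You need a separate lower bound (e.g.\ restrict to the star on $x_\ell$ and all of $Y_i$, an induced subgraph that already realizes regularity $W_i+2$). More substantially, the single-component power formula $\reg(I(D_i)^t)=\reg(I(D_i))+(t-1)(w_i+1)$ is, as you concede, the technical heart, and your sketch does not supply it. The paper's mechanism is concrete: Lemma~4.1/Corollary~4.2 establishes $(I(D)^t:x_{i_1}y^{w_y})=I(D)^{t-1}$ for $y\in N_D^+(x_{i_1})$, and this drives a nested family of short exact sequences (colon first by $x_1$, then successively by $y_{k_1}^{w_{y_{k_1}}},\dots,y_1^{w_{y_1}}$; see Theorems~4.4 and~4.10) whose terms are all controlled by the inductive hypotheses on $t$ and on $|V|$. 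Your ``Banerjee-type analysis'' would in effect have to reproduce this computation for $s=1$; without it the proposal is incomplete, and with it you have essentially rederived the paper's argument, the sums-of-powers formula then serving only to repackage the passage from $s=1$ to general $s$.
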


Our paper is organized as follows. In section $2$, we recall some
definitions and basic facts used in this paper.
In section $3$, we give precise formulas for
 projective dimension and  regularity of the edge
ideal of the disjoint union of some weighted oriented gap-free bipartite graphs.
In section $4$, we provide exact formulas for regularity and give upper and lower bounds of projective dimension
of higher powers of  such an edge ideal. Moreover, we give some examples to show that regularity and projective  dimension  of powers of edge ideal of the disjoint union of some weighted oriented gap-free bipartite graphs  are related to direction selection.

For all unexplained terminology and additional information, we refer to \cite{JG} (for the theory
of digraphs), \cite{BM} (for graph theory), and \cite{BH,HH2} (for the theory of edge ideals of graphs and
monomial ideals).  We greatfully acknowledge the use of  computer algebra system CoCoA (\cite{Co}) for our experiments.

\medskip
\section{Preliminaries }

In this section, we gather together needed  definitions and basic facts, which will
be used throughout this paper. However, for more details, we refer the reader to \cite{B2,BH,GBSVV,HH2,JG,PJS,PRT,Z1,Z3,Z4}.

For any homogeneous ideal $I$ of the polynomial ring  $S=k[x_{1},\dots,x_{n}]$, there exists a {\em graded
minimal finite free resolution}

$$0\rightarrow \bigoplus\limits_{j}S(-j)^{\beta_{p,j}(I)}\rightarrow \bigoplus\limits_{j}S(-j)^{\beta_{p-1,j}(I)}\rightarrow \cdots\rightarrow \bigoplus\limits_{j}S(-j)^{\beta_{0,j}(I)}\rightarrow I\rightarrow 0,$$
where the maps are exact, $p\leq n$, and $S(-j)$ is an $S$-module obtained by shifting
the degrees of $S$ by $j$. The number
$\beta_{i,j}(I)$, the $(i,j)$-th graded Betti number of $I$, is
an invariant of $I$ that equals the number of minimal generators of degree $j$ in the
$i$th syzygy module of $I$.
Of particular interests are the following invariants which measure the ¡°size¡± of the minimal graded
free resolution of $I$.
The projective dimension of $I$, denoted pd\,$(I)$, is defined to be
$$\mbox{pd}\,(I):=\mbox{max}\,\{i\ |\ \beta_{i,j}(I)\neq 0\}.$$
The regularity of $I$, denoted $\mbox{reg}\,(I)$, is defined by
$$\mbox{reg}\,(I):=\mbox{max}\,\{j-i\ |\ \beta_{i,j}(I)\neq 0\}.$$

\medskip
The following lemma is often used in this article.
\begin{Lemma}
\label{lem1}(\cite[Lemma 1.3]{HTT}) Let  $I$ be a proper non-zero homogeneous ideal in $S$. Then
\begin{itemize}
\item[(1)] $\mbox{pd}\,(I)=\mbox{pd}\,(S/I)-1$,
\item[(2)] $\mbox{reg}\, (I)=\mbox{reg}\,(S/I)+1$.
\end{itemize}
\end{Lemma}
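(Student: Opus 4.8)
The plan is to read off both identities directly from graded minimal free resolutions, using the short exact sequence $0\to I\to S\to S/I\to 0$. Let
$$0\to F_p\to\cdots\to F_0\to I\to 0,\qquad F_i=\bigoplus_j S(-j)^{\beta_{i,j}(I)},$$
be the graded minimal finite free resolution of $I$ recalled above, so $p=\mbox{pd}\,(I)$. Composing the surjection $F_0\twoheadrightarrow I$ with the inclusion $I\hookrightarrow S$ and splicing $S$ onto the right-hand end yields the exact complex
$$0\to F_p\to\cdots\to F_0\to S\to S/I\to 0,$$
a graded free resolution of $S/I$. First I would check that it is again \emph{minimal}: the differentials internal to $F_\bullet$ have entries in the maximal graded ideal $\mm=(x_1,\dots,x_n)$ by minimality of the original resolution, while the newly inserted map $F_0\to S$ has image $I\subseteq\mm$, so its matrix entries also lie in $\mm$. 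Hence no free summand splits off, and this is the minimal resolution of $S/I$.

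Reading the graded Betti numbers off this resolution gives $\beta_{0,0}(S/I)=1$, $\beta_{0,j}(S/I)=0$ for $j\neq 0$, together with the homological shift
$$\beta_{i,j}(S/I)=\beta_{i-1,j}(I)\qquad\text{for all }i\geq 1.$$
In particular the top nonvanishing homological degree of $S/I$ is $p+1$, which is exactly $\mbox{pd}\,(S/I)=\mbox{pd}\,(I)+1$, proving $(1)$.

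For $(2)$, the shift relation rewrites each contributing pair with $i\geq 1$ as $j-i=(j-(i-1))-1$, so the supremum of $j-i$ over these pairs equals $\mbox{reg}\,(I)-1$, while the single surviving generator in homological degree $0$ contributes $\beta_{0,0}(S/I)=1$ with $j-i=0$. Thus $\mbox{reg}\,(S/I)=\max\{0,\,\mbox{reg}\,(I)-1\}$. The only point requiring care, and the main (if mild) obstacle, is to exclude the degenerate case in which the homological degree $0$ term dominates; this is where the hypothesis that $I$ is proper and non-zero enters, since such an ideal has a minimal generator of positive degree $d\geq 1$, forcing $\beta_{0,d}(I)\neq 0$ and hence $\mbox{reg}\,(I)\geq d\geq 1$. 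Therefore $\mbox{reg}\,(S/I)=\mbox{reg}\,(I)-1$, which is $(2)$.
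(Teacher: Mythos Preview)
Your proof is correct and follows the standard argument via the Betti-number shift $\beta_{i,j}(S/I)=\beta_{i-1,j}(I)$ obtained by splicing the minimal free resolution of $I$ with $0\to I\to S\to S/I\to 0$; the care you take in ruling out the degenerate case $\mbox{reg}(I)=0$ using the hypothesis that $I$ is proper and non-zero is exactly what is needed. The paper itself does not supply a proof of this lemma at all: it is quoted as \cite[Lemma 1.3]{HTT} and used as a black box throughout, so there is no in-paper argument to compare against.
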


\medskip
Let  $u\in S$ be a monomial, we set $\mbox{supp}(u)=\{x_i: x_i|u\}$. Let $I$  be a monomial ideal and $\mathcal{G}(I)=\{u_1,\ldots,u_m\}$ denote the unique minimal set
of monomial generators of  $I$, we set $\mbox{supp}(I)=\bigcup\limits_{i=1}^{m}\mbox{supp}(u_i)$. The following two lemmas are well known.
 \begin{Lemma}
\label{lem2}(\cite[Lemma 2.5]{HTT})
Let $S_{1}=k[x_{1},\dots,x_{m}]$ and $S_{2}=k[x_{m+1},\dots,x_{n}]$ be two polynomial rings, $I\subseteq S_{1}$ and
$J\subseteq S_{2}$ be two non-zero homogeneous  ideals. Then
\begin{itemize}
\item[(1)]$\mbox{pd}\,(I+J)=\mbox{pd}\,(I)+\mbox{pd}\,(J)+1$,
\item[(2)] $\mbox{reg}\,(I+J)=\mbox{reg}\,(I)+\mbox{reg}\,(J)-1$.
\end{itemize}
\end{Lemma}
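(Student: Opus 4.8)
The plan is to exploit that $I$ and $J$ live in disjoint sets of variables, so that $S=S_1\otimes_k S_2$ and the quotient factors as a tensor product $S/(I+J)\cong (S_1/I)\otimes_k(S_2/J)$. The entire computation then reduces to a K\"unneth-type description of the graded Betti numbers of a tensor product in terms of those of its factors, after which Lemma \ref{lem1} converts the quotient statements into the ideal statements that are claimed.

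First I would set up the resolutions. Let $\mathbb{F}_\bullet$ be the graded minimal free resolution of $S_1/I$ over $S_1$ and $\mathbb{G}_\bullet$ the graded minimal free resolution of $S_2/J$ over $S_2$. Since $S$ is free, hence flat, over each $S_i$, base-changing these resolutions to $S$ preserves acyclicity, and the tensor product complex $\mathbb{F}_\bullet\otimes_k\mathbb{G}_\bullet$, viewed over $S$, resolves $(S_1/I)\otimes_k(S_2/J)=S/(I+J)$.

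The crucial point, and the step I would be most careful about, is that this tensor product resolution is again \emph{minimal}. Minimality holds precisely because the two factors use disjoint variables: the differentials of $\mathbb{F}_\bullet$ have entries in the irrelevant maximal ideal $\mathfrak{m}_1\subset S_1$ and those of $\mathbb{G}_\bullet$ in $\mathfrak{m}_2\subset S_2$, so by the Leibniz rule every entry of the differential of $\mathbb{F}_\bullet\otimes_k\mathbb{G}_\bullet$ lands in $\mathfrak{m}_1 S+\mathfrak{m}_2 S=\mathfrak{m}_S$. Equivalently, one can phrase this as the K\"unneth formula
$$\operatorname{Tor}_i^S(S/(I+J),k)\cong\bigoplus_{i_1+i_2=i}\operatorname{Tor}_{i_1}^{S_1}(S_1/I,k)\otimes_k\operatorname{Tor}_{i_2}^{S_2}(S_2/J,k),$$
in which no higher Tor-obstruction terms appear because we tensor over the field $k$. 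Reading off graded dimensions yields the convolution
$$\beta_{i,j}^S(S/(I+J))=\sum_{i_1+i_2=i}\ \sum_{j_1+j_2=j}\beta_{i_1,j_1}^{S_1}(S_1/I)\,\beta_{i_2,j_2}^{S_2}(S_2/J).$$

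From this convolution both statements drop out. The top nonvanishing homological degree of the product is the sum of the top degrees of the factors, giving $\operatorname{pd}(S/(I+J))=\operatorname{pd}(S_1/I)+\operatorname{pd}(S_2/J)$; likewise the maximum of $j-i=(j_1-i_1)+(j_2-i_2)$ is attained by pairing the two individual maxima, which is achievable since the factors vary independently, giving $\operatorname{reg}(S/(I+J))=\operatorname{reg}(S_1/I)+\operatorname{reg}(S_2/J)$. Finally I would translate from the quotients to the ideals via Lemma \ref{lem1}: substituting $\operatorname{pd}(S_i/\cdot)=\operatorname{pd}(\cdot)+1$ produces $\operatorname{pd}(I+J)=\operatorname{pd}(I)+\operatorname{pd}(J)+1$, and substituting $\operatorname{reg}(S_i/\cdot)=\operatorname{reg}(\cdot)-1$ produces $\operatorname{reg}(I+J)=\operatorname{reg}(I)+\operatorname{reg}(J)-1$, as claimed.
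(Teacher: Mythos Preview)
Your argument is correct and is the standard one: the tensor product $\mathbb{F}_\bullet\otimes_k\mathbb{G}_\bullet$ gives a minimal free resolution of $S/(I+J)$ over $S$, from which the K\"unneth convolution of Betti numbers yields additivity of projective dimension and regularity for the quotients, and Lemma~\ref{lem1} then converts these into the claimed identities for the ideals.

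As for comparison: the paper does not actually supply its own proof of this lemma. It is simply quoted as \cite[Lemma 2.5]{HTT} and used as a black box throughout. So there is nothing to compare your argument against within the paper itself; what you have written is precisely the expected proof of the cited result, and it is complete.
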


 \begin{Lemma}\label{lem3}
Let  $I, J$ be two monomial ideals  such that $\mbox{supp}\,(J)\cap \mbox{supp}\,(I)=\emptyset$. Then
\begin{itemize}
\item[(1)]$\mbox{pd}\,(JI)=\mbox{pd}\,(I)+\mbox{pd}\,(J)$,
\item[(2)]$\mbox{reg}\,(JI)=\mbox{reg}\,(I)+\mbox{reg}\,(J)$.
\end{itemize}
\end{Lemma}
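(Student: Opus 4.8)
The plan is to reduce both statements to the additivity of graded Betti numbers under tensoring resolutions over disjoint sets of variables. First I would set $S_1=k[\supp(I)]$ and $S_2=k[\supp(J)]$; since $\supp(I)\cap\supp(J)=\emptyset$, the ring $R:=S_1\otimes_k S_2=k[\supp(I)\cup\supp(J)]$ is a polynomial ring containing the generators of $JI$. Because adjoining variables to the ambient ring is a flat (indeed free) extension, the minimal free resolution of any of $I$, $J$, $JI$ over the larger ring is obtained from the one over the smaller ring by base change, so $\pd$ and $\reg$ are unchanged. Hence it suffices to compute everything over $R$ (and over $S_1$, $S_2$ for $I$, $J$).

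The key structural observation is the graded $R$-module isomorphism $JI\cong I\otimes_k J$ given by $u\otimes v\mapsto uv$. This map is $R$-linear and clearly surjects onto $JI$; it is injective precisely because $\supp(I)$ and $\supp(J)$ are disjoint, so distinct monomial pairs $(u,v)$ yield distinct product monomials $uv$. Thus $JI$ is, up to isomorphism, the external tensor product over $k$ of the $S_1$-module $I$ and the $S_2$-module $J$. This is the single step in which the disjoint-support hypothesis is genuinely used.

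Next I would take graded minimal free resolutions $F_\bullet\to I$ over $S_1$ and $G_\bullet\to J$ over $S_2$, and form the total tensor complex $F_\bullet\otimes_k G_\bullet$ of free $R$-modules. Two facts drive the argument. First, by the K\"unneth formula over the field $k$ this complex has homology $H_n=\bigoplus_{p+q=n}H_p(F)\otimes_k H_q(G)$, which vanishes for $n>0$ and equals $I\otimes_k J\cong JI$ for $n=0$; hence it is a free $R$-resolution of $JI$. Second, its differential is $d_F\otimes 1\pm 1\otimes d_G$, whose entries lie in $\mathfrak{m}_{S_1}R+\mathfrak{m}_{S_2}R=\mathfrak{m}_R$, so the resolution remains minimal. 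Minimality then gives $\beta_{i,j}^R(JI)=\sum_{i_1+i_2=i,\ j_1+j_2=j}\beta_{i_1,j_1}^{S_1}(I)\,\beta_{i_2,j_2}^{S_2}(J)$.

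Finally I would read off both invariants. Writing $i=i_1+i_2$, the largest homological degree carrying a nonzero Betti number is $\pd(I)+\pd(J)$, which proves (1). Writing $j-i=(j_1-i_1)+(j_2-i_2)$ and choosing pairs that attain $\reg(I)$ and $\reg(J)$ independently, the largest value of $j-i$ with nonzero Betti number is $\reg(I)+\reg(J)$, which proves (2). I expect the only delicate points to be the verification that the tensor complex stays minimal (the grading and sign bookkeeping on the differential) and the isomorphism $JI\cong I\otimes_k J$; once these are established the Betti-number additivity, and hence both formulas, are routine.
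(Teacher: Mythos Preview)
Your argument is correct and is essentially the standard proof of this fact via tensor product of minimal free resolutions over disjoint variable sets. The only remark is that the paper does not actually prove this lemma: it is stated there as ``well known'' with no argument or citation, so there is nothing to compare against. Your write-up supplies precisely the details one would expect---the isomorphism $JI\cong I\otimes_k J$ (which genuinely uses the disjoint-support hypothesis), the K\"unneth argument giving exactness of $F_\bullet\otimes_k G_\bullet$, and the observation that the differential lands in $\mathfrak{m}_R$ so minimality is preserved---and from the resulting Betti-number convolution the additivity of $\mathrm{pd}$ and $\mathrm{reg}$ follows immediately.
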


\medskip
The following two lemmas can be used for computing projective dimension and regularity of an ideal.
\begin{Lemma}
\label{lem4}(\cite[Corollary 2.12]{HTT})  Let $I\subset S$ be a monomial ideal, let $f$ be a monomial of degree $k$. The following results hold.
\begin{itemize}
\item[(1)] $\mbox{pd}\,(I)\geq\mbox{pd}\,((I:f))$,
\item[(2)] If $k=1$, then $\mbox{reg}\,(I)=\mbox{reg}\,((I:f))+1$ or $\mbox{reg}\,(I)=\mbox{reg}\,((I,f))$.
\end{itemize}
\end{Lemma}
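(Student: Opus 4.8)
The plan is to read both parts off a single short exact sequence coming from multiplication by $f$. Since $f\cdot(I:f)\subseteq I$, the assignment $\overline{g}\mapsto\overline{gf}$ defines a homogeneous map $S/(I:f)\to S/I$ of degree $k$; it is injective because its kernel is $\{\overline g:gf\in I\}=0$, its image is $(fS+I)/I$, and its cokernel is $S/(fS+I)=S/(I,f)$. This yields the graded exact sequence
\begin{equation*}
0\longrightarrow \bigl(S/(I:f)\bigr)(-k)\xrightarrow{\ \cdot f\ } S/I\longrightarrow S/(I,f)\longrightarrow 0. \tag{$\ast$}
\end{equation*}
By Lemma~\ref{lem1} it is equivalent to prove (1) in the form $\pd(S/(I:f))\le\pd(S/I)$ and (2) in the form that $\reg(S/I)$ equals either $\reg(S/(I:f))+1$ or $\reg(S/(I,f))$, and I would extract these from the long exact sequences in $\Tor_\bullet^S(-,k)$ and in local cohomology $H^\bullet_{\mm}(-)$ attached to $(\ast)$.

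For (1), I would first reduce to $\deg f=1$: writing $f=x_{i_1}\cdots x_{i_k}$ with repetitions and using $(I:f)=(\cdots((I:x_{i_1}):x_{i_2})\cdots:x_{i_k})$, it suffices to prove $\pd(S/(I:x_i))\le\pd(S/I)$ for a single variable and iterate. Applying $\Tor^S_\bullet(-,k)$ to $(\ast)$ with $f=x_i$ gives the standard inequality $\pd(S/(I:x_i))\le\max\{\pd(S/I),\,\pd(S/(I,x_i))-1\}$, so it remains to bound $\pd(S/(I,x_i))$. Here is the circular difficulty: the same sequence $(\ast)$ only returns $\pd(S/(I,x_i))\le\max\{\pd(S/I),\,\pd(S/(I:x_i))+1\}$, which ties the two colon/sum ideals together without closing the estimate. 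To break the loop I would use the monomial structure: since $x_i$ is a nonzerodivisor on the quotient, $\pd(S/(I,x_i))=\pd(S/I')+1$, where $I'$ is the deletion of $I$ (the ideal generated by those minimal generators of $I$ not involving $x_i$, in the remaining variables), and then invoke the restriction property of multigraded Betti numbers—monomial deletion does not increase Betti numbers, e.g.\ via the sublattice structure of the lcm lattice—to conclude $\pd(S/I')\le\pd(S/I)$. This restriction step is the main obstacle and is exactly the point where the hypothesis that $I$ is a monomial ideal is indispensable; for squarefree $I$ it is transparent from Hochster's formula together with the identification of the colon with a link.

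For (2), set $k=1$, so that the shift in $(\ast)$ gives $\reg\bigl((S/(I:f))(-1)\bigr)=\reg(S/(I:f))+1$. I would apply the standard regularity inequalities for a short exact sequence $0\to A\to B\to C\to 0$, namely $\reg B\le\max\{\reg A,\reg C\}$, $\reg A\le\max\{\reg B,\reg C+1\}$, and $\reg C\le\max\{\reg A-1,\reg B\}$, to the sequence $(\ast)$. When $\reg(S/(I:f))+1\ne\reg(S/(I,f))$ these three bounds pin $\reg(S/I)$ to the larger of the two values, which is precisely one of the asserted quantities. The delicate point—and the main obstacle for (2)—is the boundary case $\reg(S/(I:f))+1=\reg(S/(I,f))$, in which the crude inequalities still permit $\reg(S/I)$ to be strictly smaller; to rule this out I would pass to the long exact sequence of local cohomology modules attached to $(\ast)$ and argue on the top nonvanishing graded strand that the extremal contribution coming from $S/(I,f)$ is not annihilated by the connecting homomorphism, forcing $\reg(S/I)$ to equal the common value and thereby completing the dichotomy.
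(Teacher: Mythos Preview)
The paper does not prove this lemma; it is quoted from \cite[Corollary~2.12]{HTT} without argument, so there is no proof in the paper to compare against. I can only assess your proposal on its own merits.

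Your plan for part~(1) is sound. The reduction to a variable and the identity $\pd_S\bigl(S/(I,x_i)\bigr)=\pd_{S'}(S'/I')+1$ are correct, and the restriction step $\pd_{S'}(S'/I')\le\pd_S(S/I)$ is true: for any multidegree $a$ with $a_i=0$ one has a canonical isomorphism of Koszul strands $(K_\bullet\otimes S'/I')_a\cong(K_\bullet\otimes S/I)_a$, so every multigraded Betti number of $S'/I'$ already occurs for $S/I$. This is the point where the monomial hypothesis enters, and it settles~(1).

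For part~(2) your case analysis slips. Writing $\alpha=\reg\bigl(S/(I:f)\bigr)+1$, $\beta=\reg(S/I)$, $\gamma=\reg\bigl(S/(I,f)\bigr)$, the exact sequence forces $\beta=\max\{\alpha,\gamma\}$ whenever $\alpha\neq\gamma+1$ (this is exactly the equality clause in Lemma~\ref{lem5}(1)); in particular the case $\alpha=\gamma$ that you flag as delicate already gives $\beta=\gamma$ directly. The genuinely open case is $\alpha=\gamma+1$, i.e.\ $\reg\bigl(S/(I:f)\bigr)=\reg\bigl(S/(I,f)\bigr)$, where the three inequalities only yield $\beta\le\alpha$ and one must rule out $\beta<\gamma$. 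Your proposed local–cohomology chase is vague here and not obviously salvageable. The clean fix is the very restriction fact you used in~(1): since $f$ is a variable and $I$ is monomial, the Betti numbers of $S/(I,f)\cong S'/I'$ are among those of $S/I$, hence $\gamma=\reg\bigl(S/(I,f)\bigr)\le\reg(S/I)=\beta$, and then $\beta\in\{\gamma,\gamma+1\}=\{\gamma,\alpha\}$ as required.
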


\begin{Lemma}
\label{lem5}(\cite[Lemma 1.1 and Lemma 1.2]{HTT})  Let\ \ $0\rightarrow A \rightarrow  B \rightarrow  C \rightarrow 0$\ \  be a short exact sequence of finitely generated graded $S$-modules.
Then
\begin{itemize}
\item[(1)]$\mbox{reg}\,(B)\leq \mbox{max}\, \{\mbox{reg}\,(A),\mbox{reg}\,(C)\}$, the equality holds  if $\mbox{reg}\,(A)-1\neq \mbox{reg}\,(C)$,
\item[(2)] $\mbox{pd}\,(B)\leq \mbox{max}\, \{\mbox{pd}\,(A),\mbox{pd}\,(C)\}$, the equality holds  if $ \mbox{pd}\,(C)\neq\mbox{pd}\,(A)+1$,
\item[(3)] $\mbox{pd}\,(C)\leq \mbox{max}\, \{\mbox{pd}\,(A)+1,\mbox{pd}\,(B)\}$.
\end{itemize}
\end{Lemma}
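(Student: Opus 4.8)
The plan is to deduce all three statements from the long exact sequence of Tor modules attached to $0\to A\to B\to C\to 0$, using the standard identifications $\beta_{i,j}(M)=\dim_k\Tor_i^S(M,k)_j$, so that $\mbox{pd}(M)=\mbox{max}\,\{i:\Tor_i^S(M,k)\neq 0\}$ and $\mbox{reg}(M)=\mbox{max}\,\{j-i:\Tor_i^S(M,k)_j\neq 0\}$. Tensoring the short exact sequence with $k$ over $S$ yields, in every internal degree $j$, the exact sequence
$$\cdots\to\Tor_i^S(A,k)_j\to\Tor_i^S(B,k)_j\to\Tor_i^S(C,k)_j\xrightarrow{\ \delta\ }\Tor_{i-1}^S(A,k)_j\to\cdots,$$
and everything is read off from this. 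The only structural feature I need is that the connecting map $\delta$ fixes $j$ while lowering the homological index by one, hence it raises the \emph{regularity-grading} $j-i$ by one.

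First I would settle the three inequalities, which are immediate from exactness at a single spot. If $\Tor_i^S(B,k)_j\neq 0$, then exactness at the middle term forces $\Tor_i^S(A,k)_j\neq 0$ or $\Tor_i^S(C,k)_j\neq 0$; maximizing $j-i$ gives $\mbox{reg}(B)\leq\mbox{max}\,\{\mbox{reg}(A),\mbox{reg}(C)\}$ and maximizing $i$ gives $\mbox{pd}(B)\leq\mbox{max}\,\{\mbox{pd}(A),\mbox{pd}(C)\}$. For $(3)$ I would instead read exactness at $\Tor_i^S(C,k)$: the vanishing of both $\Tor_i^S(B,k)$ and $\Tor_{i-1}^S(A,k)$ forces $\Tor_i^S(C,k)=0$, and since the second term contributes homological index $(i-1)+1$, this yields $\mbox{pd}(C)\leq\mbox{max}\,\{\mbox{pd}(B),\mbox{pd}(A)+1\}$.

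Next I would prove the two equalities by locating the critical homological degree and using the hypothesis to force $\delta$ to vanish there. For $(2)$, put $a=\mbox{pd}(A)$, $c=\mbox{pd}(C)$ and assume $c\neq a+1$. If $a\geq c$, then $\Tor_{a+1}^S(C,k)=0$, so the segment $\Tor_{a+1}^S(C,k)\to\Tor_a^S(A,k)\to\Tor_a^S(B,k)$ shows the nonzero module $\Tor_a^S(A,k)$ injects into $\Tor_a^S(B,k)$, whence $\mbox{pd}(B)=a$. If $c>a$ then $c\geq a+2$, so both $\Tor_c^S(A,k)$ and $\Tor_{c-1}^S(A,k)$ vanish and the segment $\Tor_c^S(A,k)\to\Tor_c^S(B,k)\to\Tor_c^S(C,k)\to\Tor_{c-1}^S(A,k)$ gives $\Tor_c^S(B,k)\cong\Tor_c^S(C,k)\neq 0$, whence $\mbox{pd}(B)=c$; in either case $\mbox{pd}(B)=\mbox{max}\,\{a,c\}$. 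For $(1)$ I would run the identical argument in the regularity-grading: with $a=\mbox{reg}(A)$, $c=\mbox{reg}(C)$ and $a-1\neq c$, when $a\leq c$ a top class $\Tor_i^S(C,k)_j$ (with $j-i=c$) maps into $\Tor_{i-1}^S(A,k)_j$, which has regularity-grading $c+1>a$ and so vanishes, forcing the class to lift to $\Tor_i^S(B,k)_j$; when $a>c$ the hypothesis upgrades this to $a\geq c+2$, so the incoming term $\Tor_{i+1}^S(C,k)_j$ at a top class $\Tor_i^S(A,k)_j$ has regularity-grading $a-1>c$ and vanishes, letting that class inject into $\Tor_i^S(B,k)_j$. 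Either way $\mbox{reg}(B)=\mbox{max}\,\{a,c\}$.

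I expect the only real obstacle to be bookkeeping the one-step homological shift of $\delta$: the hypotheses $c\neq a+1$ (projective dimension) and $a-1\neq c$ (regularity) are precisely the conditions under which the obstructing term sitting one homological degree away from the dominant top class is forced to vanish, so that $\delta$ cannot cancel that class. Once the correct critical degree is pinned down in each of the cases $a\geq c$ and $a<c$, exactness supplies the equality with no further computation.
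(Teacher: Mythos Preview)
Your argument is correct: the long exact sequence of graded $\Tor$ modules, together with the one-step shift of the connecting homomorphism, yields all three inequalities directly, and your case analysis for the equality statements correctly identifies how the hypotheses $c\neq a+1$ and $a-1\neq c$ kill the obstructing term so that the dominant top class survives in $\Tor^S_\bullet(B,k)$. The paper does not supply its own proof of this lemma; it is quoted from \cite[Lemma 1.1 and Lemma 1.2]{HTT} as a standard tool, and your write-up is exactly the expected Tor long exact sequence argument.
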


\vspace{5mm}
\section{Projective dimension  and regularity  of  edge ideal of the disjoint union of some weighted oriented gap-free  bipartite graphs}

\vspace{5mm} In this section, we will observe some basic results concerning  the disjoint union of some weighted  oriented
gap-free  bipartite graphs and   provide  some exact formulas for  projective dimension  and regularity
of the corresponding edge ideal.

 \begin{Definition}\label{def1}
 Let $s$ be  a positive integer and $G_i=(V_i,E_i)$   a simple graph for any $1\leq i\leq s$.
 \begin{itemize}
\item[(a)] They are   disjoint if they have no vertex in common, i.e., $V_i\cap V_j=\emptyset$ for any $i\neq j$.
\item[(b)] Their union is the graph  $\bigcup\limits_{i=1}^{s}G_{i}$ with vertex set $\bigcup\limits_{i=1}^{s}V_{i}$ and edge set $\bigcup\limits_{i=1}^{s}E_{i}$.
\item[(c)] If $G_1,\ldots,G_s$ are disjoint, we refer to their union  as a disjoint union, denoted $\coprod\limits_{i=1}^{s}G_{i}$.
 \end{itemize}
\end{Definition}

\begin{Definition}\label{def2}
 Let $s$ be a positive integer and  $G_1,\ldots,G_s$ be s disjoint gap-free bipartite graphs.
Let $D=(V(D),E(D),w)$ be a weighted oriented graph satisfying $D=\coprod\limits_{i=1}^{s}D_{i}$ being the disjoint union of $D_i$, where the underlying graph of $D_i$ is $G_i$. The  orientation of $D$ is as follows: Let $X_i, Y_i$ be  bipartition of $G_i$, all edges of  $D_i$ are oriented away from $X_i$  for $1\leq i\leq s$.
\end{Definition}

\medskip
 First, we provide  some properties of gap-free bipartite graphs. We prove several lemmas that will be used repeatedly throughout this paper to get our  results.
\begin{Lemma}
\label{lem6} Let $G=(V(G),E(G))$ be a gap-free bipartite graph with  bipartition $X=\{x_1,\ldots,x_{\ell}\}$, $Y=\{y_{1},\ldots,y_{m}\}$, where  $d(x_1)\leq d(x_2)\leq \cdots \leq d(x_{\ell})$. Then
\begin{itemize}
\item[(1)] $N_G(x_1)\subseteq N_G(x_2)\subseteq \cdots \subseteq N_G(x_\ell)$,
\item[(2)] $N_G(y)=X$ for any $y\in N_G(x_1)$,
\end{itemize}
\end{Lemma}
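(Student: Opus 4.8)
$\emph{Proof plan.}$ The statement concerns a gap-free bipartite graph $G$ with bipartition $X=\{x_1,\ldots,x_\ell\}$, $Y=\{y_1,\ldots,y_m\}$, where the vertices of $X$ are ordered by nondecreasing degree. The plan is to prove (1) first, since (2) will follow readily from the chain of neighborhood containments together with a direct gap-free argument. The fundamental tool throughout is the equivalent characterization of gap-freeness stated in the introduction: $G$ contains no two vertex-disjoint edges as an induced subgraph. Concretely, if $x_ay_b$ and $x_cy_d$ are edges with $a\neq c$ and $b\neq d$, then at least one of the ``cross'' pairs $\{x_a,y_d\}$ or $\{x_c,y_b\}$ must also be an edge of $G$.

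For part (1), I would fix indices $i<j$ (so $d(x_i)\le d(x_j)$) and argue $N_G(x_i)\subseteq N_G(x_j)$ by contradiction. Suppose some $y\in N_G(x_i)\setminus N_G(x_j)$. The idea is to exploit the gap-free condition on pairs of edges at $x_i$ and $x_j$: for every neighbor $y'\in N_G(x_j)$, the two edges $x_iy$ and $x_jy'$ cannot form a gap, so (since $x_jy\notin E(G)$ by assumption) we must have $x_iy'\in E(G)$. This forces $N_G(x_j)\subseteq N_G(x_i)$. Combined with the witnessed element $y\in N_G(x_i)\setminus N_G(x_j)$, this yields $|N_G(x_i)|\ge |N_G(x_j)|+1$, i.e. $d(x_i)>d(x_j)$, contradicting the degree ordering $d(x_i)\le d(x_j)$. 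Hence no such $y$ exists and $N_G(x_i)\subseteq N_G(x_j)$, giving the full chain.

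For part (2), take any $y\in N_G(x_1)$; I must show $N_G(y)=X$, i.e. $y$ is adjacent to every $x_k$. Since $G$ is bipartite, $N_G(y)\subseteq X$ automatically, so only $X\subseteq N_G(y)$ needs proof. By part (1), $N_G(x_1)$ is the smallest neighborhood, so $x_1$ realizes the minimum degree; the point is that every $y$ adjacent to the minimum-degree vertex $x_1$ must in fact be adjacent to all of $X$. Fix an arbitrary $x_k$ and any neighbor $y_k\in N_G(x_k)$ (which exists since $G$ has no isolated vertices). If $y=y_k$ we are done; otherwise the edges $x_1y$ and $x_ky_k$ are vertex-disjoint, so gap-freeness forces $x_1y_k\in E(G)$ or $x_ky\in E(G)$. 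In the latter case $x_ky\in E(G)$ gives $x_k\in N_G(y)$ directly. In the former case $y_k\in N_G(x_1)\subseteq N_G(x_k)$, so $y_k$ is a common neighbor, and one reapplies the gap-free condition to the pair $x_1y$ and $x_ky_k$ now knowing the adjacencies to conclude $x_ky\in E(G)$; alternatively, observe that the chain in (1) makes $N_G(x_1)\subseteq N_G(x_k)$ so that I can instead start from a common neighbor and run the same two-edge argument to deduce $x_ky\in E(G)$.

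The main obstacle I anticipate is the bookkeeping in part (2): the gap-free condition only guarantees $\emph{one}$ of the two cross-edges, so I must be careful to route every case to the desired conclusion $x_k\in N_G(y)$ rather than merely producing some other adjacency. The cleanest route is likely to lean hard on part (1): since $N_G(x_1)$ is contained in every $N_G(x_k)$, any neighbor of $x_k$ can be compared against $y\in N_G(x_1)$, and the gap-free dichotomy can be steered so that the alternative ``$x_1y_k\in E$'' is harmless (it only reconfirms the containment chain) while the genuinely useful alternative ``$x_ky\in E$'' is the one that closes the argument. Making this dichotomy collapse to the single needed edge in all cases is the delicate step.
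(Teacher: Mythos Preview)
Your argument for part~(1) is essentially the paper's: assume a failure of containment, use gap-freeness (together with the hypothesis $y\notin N_G(x_j)$, which kills one of the two cross-edges) to force the reverse containment, and contradict the degree ordering. The paper only treats consecutive indices $i,i+1$, but the logic is identical.

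Your plan for part~(2), however, is both overcomplicated and has a genuine gap. The paper dispatches (2) in one line: if $y\in N_G(x_1)$, then by the chain in~(1) we have $y\in N_G(x_1)\subseteq N_G(x_k)$ for every $k$, so $x_ky\in E(G)$ and hence $x_k\in N_G(y)$. No further gap-free argument is needed. You instead introduce an auxiliary neighbor $y_k\in N_G(x_k)$ and try to run a dichotomy on the pair $x_1y$, $x_ky_k$. In your ``former case'' ($x_1y_k\in E(G)$), you propose to ``reapply the gap-free condition to the pair $x_1y$ and $x_ky_k$ now knowing the adjacencies to conclude $x_ky\in E(G)$''; but this does not work: gap-freeness only asserts that \emph{at least one} cross-edge exists, and once $x_1y_k\in E(G)$ is known, the condition is already satisfied and yields no information about $x_ky$. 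Your ``alternatively'' clause hints at the correct route---using $N_G(x_1)\subseteq N_G(x_k)$ directly---but that observation applied to $y$ itself (not to $y_k$) \emph{is} the whole proof, with no two-edge argument required. Drop the dichotomy entirely and just invoke~(1).
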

\begin{proof}
(1) Case  $\ell=1$ is  obvious. Assume $\ell\geq 2$.   If there exists some $i\in [\ell-1]$ such that $N_G(x_i)\nsubseteq N_G(x_{i+1})$.
Choose $y\in N_G(x_i)\setminus N_G(x_{i+1})$,  for any $y'\in N_G(x_{i+1})$, we have  $x_iy, x_{i+1}y'\in E(G)$. It follows that
$y'\in N_G(x_{i})$ since  $G$ is gap-free. Hence  $N_G(x_{i+1})\subsetneq N_G(x_{i})$ for the arbitrariness of $y'$. This implies $d(x_{i+1})<d(x_i)$, contradicting with $d(x_{i})\leq d(x_{i+1})$.

(2) follows from (1).
\end{proof}

\begin{Remark}
\label{rem1}If $G$ is a gap-free bipartite graph with bipartition $X=\{x_1,\ldots,x_{\ell}\}$, $Y=\{y_{1},\ldots,y_{m}\}$. We always assume  $d(x_1)\leq d(x_2)\leq \cdots \leq d(x_{\ell})$.  Throughout this paper,  by Lemma \ref{lem6} (1), we  always suppose  $N_{G}(x_i)=\{y_1,\ldots,y_{k_i}\}$  with $1\leq k_1\leq k_2\leq\cdots\leq k_{\ell}=m$  for $1\leq i\leq \ell$.
\end{Remark}

\begin{Lemma}
\label{lem7} Let $G$ be a gap-free bipartite graph as Lemma \ref{lem6}. For any $i\in [\ell]$, let  $e=x_iy_{k_i}$, then  the connected component $G'$ of  $G\setminus e$ with $|E(G')|\geq 1$ is gap-free.
\end{Lemma}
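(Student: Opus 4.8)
The plan is to argue by contradiction, exploiting that the only difference between $G$ and $G\setminus e$ is the single edge $e=x_iy_{k_i}$, together with the nested neighborhood structure recorded in Remark~\ref{rem1}. First I would note that, being a connected component, $G'$ is an \emph{induced} subgraph of $G\setminus e$: whenever two vertices of $G'$ are joined by an edge of $G\setminus e$, that edge already lies in $G'$. Consequently any putative gap of $G'$ is also a gap of $G\setminus e$ on the same four vertices.

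Suppose then that $G'$ contained a gap, i.e. two disjoint edges $ab$ and $cd$ of $G'$ with no edge of $G'$ joining $\{a,b\}$ to $\{c,d\}$. Since $E(G')\subseteq E(G)$, both $ab$ and $cd$ are edges of $G$, and as $G$ is gap-free there must exist an edge of $G$ between the two pairs. No such edge survives in $G\setminus e$, so the connecting edge can only be $e=x_iy_{k_i}$; in particular $x_i$ and $y_{k_i}$ lie in different pairs. After relabelling I may assume $a=x_i$ and $d=y_{k_i}$, whence $b=y_p\in N_G(x_i)$ and $c=x_q\in N_G(y_{k_i})$. Because $ab=x_iy_p\neq e$ we have $p<k_i$, and because $cd=x_qy_{k_i}\neq e$ we have $q\neq i$ with $k_q\geq k_i$.

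The contradiction now comes from the nested neighborhoods: since $p<k_i\leq k_q$, the vertex $y_p$ lies in $N_G(x_q)=\{y_1,\dots,y_{k_q}\}$, so $x_qy_p$ is an edge of $G$. It is distinct from $e$ (as $y_p\neq y_{k_i}$), and its endpoints $x_q,y_p$ both belong to $V(G')$; by the induced-subgraph observation $x_qy_p$ is therefore an edge of $G'$ joining $\{a,b\}$ to $\{c,d\}$, contradicting the assumed gap. Hence $G'$ has no gap and is gap-free.

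I expect the main obstacle to be purely bookkeeping: correctly pinning down, from the gap-freeness of $G$, that the unique edge of $G$ filling the hypothetical gap must be exactly $e$, and then tracking the indices so that the forced edge $x_qy_p$ genuinely exists. The existence of $x_qy_p$ is where the ordering $k_1\leq\cdots\leq k_\ell$ of Lemma~\ref{lem6} and Remark~\ref{rem1} is essential; without the nested structure one could not guarantee $y_p\in N_G(x_q)$.
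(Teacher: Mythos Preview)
Your argument is correct and follows essentially the same route as the paper's proof: both reduce to showing that a hypothetical gap in $G\setminus e$ forces the connecting edge in $G$ to be exactly $e=x_iy_{k_i}$, and then use the nested neighborhoods $N_G(x_1)\subseteq\cdots\subseteq N_G(x_\ell)$ to produce the extra edge $x_qy_p$ (the paper's $x_cy_b$) that contradicts the gap. The only cosmetic difference is that the paper first splits into three cases according to whether $d(x_i)$ or $d(y_{k_i})$ equals $1$ (in which cases $G'$ is already an induced subgraph of $G$ itself), whereas you handle all cases uniformly via the observation that a connected component $G'$ is induced in $G\setminus e$; your packaging is slightly cleaner but the content is the same.
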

\begin{proof}
Since $|E(G')|\geq 1$, we have $|E(G)|\geq 2$. This implies $d(x_i)\geq 2$ or  $d(y_{k_i})\geq 2$. If
$d(x_i)=1$ and  $d(y_{k_i})=1$, then  $|E(G)|=1$ because  gap-free graphs are connected, a contradiction.
By direct calculation, we obtain
$$G'=\left\{\begin{array}{ll}
G\setminus x_{i}\ \ &\text{if}\ \ d(x_i)=1, \ d(y_{k_i})\geq 2,\\
G\setminus y_{k_i}\ \ &\text{if} \ \ d(x_i)\geq 2, \ d(y_{k_i})=1,\\
G\setminus e\ \ &\text{if} \ \ d(x_i)\geq 2, \ d(y_{k_i})\geq 2.
\end{array}\right.$$

If  $d(x_i)=1$,  $d(y_{{k_i}})\geq 2$, or  $d(x_{i})\geq 2$, $d(y_{k_i})=1$, then $G'$ is gap-free because it is an induced subgraph of $G$. Otherwise, we have $G'=G\setminus e$. If $G'$  is not gap-free, then there exist $a\neq c$, $b\neq d$ such that  $x_ay_b, x_cy_d\in E(G\setminus e)$  form a gap.  Obviously  $a=i$ and  $d=k_i$, or $c=i$ and  $b=k_i$. Say $a=i$ and  $d=k_i$, this implies $b<k_i$. Therefore, $x_cy_b\in E(G')$,
  contradicting with $x_ay_b, x_cy_d\in E(G\setminus e)$ forming a gap.
\end{proof}

\begin{Lemma}\label{lem8}
 Let $G$ be a gap-free bipartite graph as Lemma \ref{lem6} and  $X=\{x_1,\ldots,x_{\ell}\}$, $Y=\{y_{1},\ldots,y_{m}\}$ be its bipartition.
 Let  $N_{G}(x_i)=\{y_1,\ldots,y_{k_i}\}$ for any $i\in [\ell]$. If there exists some  $q\in [\ell]$ such that $k_q\geq2$
 and $d(x_q)+d(y_{k_q})=\mbox{max}\,\{d(x_i)+d(y_{k_i})\mid x_i\in X\}$, then
 $$d(x_q)+d(y_{k_q-1})-1\leq\mbox{max}\,\{d(x_i)+d(y_{k_i})\mid x_i\in X\}.$$
\end{Lemma}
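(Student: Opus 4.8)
The plan is to convert the statement into the combinatorial data $1\le k_1\le\cdots\le k_\ell=m$ supplied by Lemma~\ref{lem6} and the Remark. Since $N_G(x_i)=\{y_1,\ldots,y_{k_i}\}$ we have $d(x_i)=k_i$, and $y_j$ is adjacent to $x_i$ precisely when $k_i\ge j$; by monotonicity of the $k_i$ this yields $d(y_j)=|\{i:k_i\ge j\}|=\ell-\min\{i:k_i\ge j\}+1$, which is non-increasing in $j$. Abbreviating $M=\max\{d(x_i)+d(y_{k_i})\mid x_i\in X\}$ and using $d(x_q)=k_q$ together with the hypothesis $M=k_q+d(y_{k_q})$, the assertion reduces to the single inequality
\[
k_q+d(y_{k_q-1})-1\le M,
\]
in which $y_{k_q-1}$ is legitimate because $k_q\ge 2$. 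Note at once that $d(y_{k_q-1})\ge d(y_{k_q})$, so monotonicity alone pushes in the wrong direction and something more is needed.

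First I would identify the earliest $x$-vertex adjacent to $y_{k_q-1}$, namely $p=\min\{i:k_i\ge k_q-1\}$. This is well defined since $q$ itself satisfies $k_q\ge k_q-1$, so $p\le q$, and the formula above gives $d(y_{k_q-1})=\ell-p+1$. By minimality $k_p\ge k_q-1$, and I would split on whether this is strict. If $k_p\ge k_q$, then for $i<p$ one has $k_i<k_q-1<k_q$ while for $i\ge p$ one has $k_i\ge k_p\ge k_q$; hence $\min\{i:k_i\ge k_q\}=p$ and $d(y_{k_q})=\ell-p+1=d(y_{k_q-1})$, so the left-hand side equals $k_q+d(y_{k_q})-1=M-1\le M$. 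If instead $k_p=k_q-1$, then $d(x_p)=k_p=k_q-1$ and $y_{k_p}=y_{k_q-1}$, so the pair $(x_p,y_{k_p})$ is one of the competitors in the maximum and contributes the value $d(x_p)+d(y_{k_p})=(k_q-1)+d(y_{k_q-1})$; consequently $k_q+d(y_{k_q-1})-1=(k_q-1)+d(y_{k_q-1})=d(x_p)+d(y_{k_p})\le M$.

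In either case the reduced inequality holds, which finishes the argument. I expect the step needing the most care to be the second case: the bound is genuinely sharp (equality can occur), and it is exactly the observation that the vertex $x_p$ paired with its largest neighbour $y_{k_p}=y_{k_q-1}$ already appears among the pairs defining $M$ that makes the estimate go through. The bookkeeping to confirm is light---mainly verifying $d(y_{k_q-1})=\ell-p+1$ and, in the first case, the identity $\min\{i:k_i\ge k_q\}=p$---but the conceptual crux is recognizing which pair of vertices to feed into the definition of $M$.
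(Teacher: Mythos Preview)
Your proof is correct. The key insight is the same as the paper's---namely, that a vertex $x_p$ with $k_p=k_q-1$ already furnishes a competitor $d(x_p)+d(y_{k_p})=(k_q-1)+d(y_{k_q-1})$ in the maximum $M$---but the packaging differs. The paper argues by contradiction: assuming the inequality fails forces $d(y_{k_q-1})\ge d(y_{k_q})+2$, which produces \emph{two} vertices $x_{p_1},x_{p_2}\in N_G(y_{k_q-1})\setminus N_G(y_{k_q})$; taking the smaller index $p_1\le q-2$ then yields the strict inequality $d(x_{p_1})+d(y_{k_{p_1}})\ge d(x_q)+d(y_{k_q})+1$, contradicting maximality. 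Your direct argument is leaner: by isolating the minimal index $p$ with $k_p\ge k_q-1$ and splitting on whether $k_p\ge k_q$ or $k_p=k_q-1$, you need only one such vertex and a non-strict bound, which is exactly what the lemma asserts. The trade-off is that the paper's route makes the ``room to spare'' visible (the contradiction is by a margin of $1$), while yours shows directly that equality is possible and handles both cases uniformly.
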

\begin{proof} If $d(x_q)+d(y_{k_q-1})-1>d(x_q)+d(y_{k_q})$, then $d(y_{k_q-1})>d(y_{k_q})+1$. Thus there exist two different vertices $x_{p_1}, x_{p_2}$  such that $x_{p_1}, x_{p_2}\in N_{G}(y_{k_q-1})\setminus  N_{G}(y_{k_q})$. It follows that $p_1,p_2\in  [q-1]$ and $k_{p_i}=k_q-1$  from inclusion relations of neighbourhoods of elements in  $X$. Let $p_1<p_2$, then
\begin{eqnarray*}
d(x_{p_1})+d(y_{k_{p_1}})&=&d(x_{p_1})+d(y_{k_q-1})\geq (k_q-1)+(\ell-p_1+1)\\
&\geq& (k_q-1)+(\ell-(q-2))+1\\
&\geq& d(x_q)+d(y_{k_q})+1
\end{eqnarray*}
where the last inequality holds because of $y_{k_q}\in N_{G}(x_j)$ for  $q\leq j \leq \ell$,
a contradiction.
\end{proof}

\medskip
Throughout this paper,  let $D=(V(D),E(D),w)$ be a weighted oriented gap-free bipartite graph with bipartition $X=\{x_1,\ldots,x_{\ell}\}$, $Y=\{y_{1},\ldots,y_{m}\}$, then  we always assume  $d(x_1)\leq d(x_2)\leq \cdots \leq d(x_{\ell})$ and  the  orientation of every edge in $D$ is  away from $X$. Let  $N_{D}^{+}(x_i)=\{y_1,\ldots,y_{k_i}\}$  with $1\leq k_1\leq k_2\leq\cdots\leq k_{\ell}=m$  for $1\leq i\leq \ell$.

Let $D=(V(D), E(D),w)$ be a weighted oriented graph. For $T\subset V(D)$, we define
its {\em induced  subgraph} $H=(V(H), E(H),w)$  to be a  graph with $V(H)=T$,  for any $u,v\in V(H)$, $uv\in E(H)$  if and only if $uv\in E(D)$. Obviously $H=(V(H), E(H),w)$ is a  weighted oriented graph, its orientation    is the same as  in $D$.
 For any $u\in V(H)$, if $u$ is not a source in $H$, then its weight  equals to the weight of $u$ in $D$, otherwise, its weight in $H$ equals to $1$.
For  $P\subset V(D)$, we  denote
$D\setminus P$ to be the induced subgraph of $D$ obtained by removing the vertices in $P$ and the
edges incident to these vertices. If $P=\{x\}$ consists of a  vertex, then we write $D\setminus x$ for $D\setminus \{x\}$.
For $W\subseteq E(D)$, we define $D\setminus W$ to be  a subgraph of $D$ with all edges  in $W$  deleted (but its vertices remained).  When $W=\{e\}$ consists of an  edge, we write $D\setminus e$ instead of $D\setminus \{e\}$.
For $x\in V(D)$,  we denote by $N_D^{+}(x)=\{y:xy\in E(D)\}$, $N_D^{-}(x)=\{y:yx\in E(D)\}$ and $N_D(x)=N_D^{+}(x)\cup N_D^{-}(x)$.

\begin{Lemma}\label{lem9}
Let $D=(V(D),E(D),w)$ be a weighted oriented gap-free bipartite graph  with bipartition $X=\{x_1,\ldots,x_{\ell}\}$, $Y=\{y_{1},\ldots,y_{m}\}$.
 Let  $N_{D}^{+}(x_i)=\{y_1,\ldots,y_{k_i}\}$ for $1\leq i\leq \ell$ and $e=x_iy_{k_i}$.
 If $|E(D)|\geq 2$,
then
$$\mbox{pd}\,((I(D\setminus e):x_iy_{k_i}^{w_{y_{k_i}}}))=d(x_i)+d(y_{k_i})-3.$$
\end{Lemma}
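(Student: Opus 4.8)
The plan is to compute the colon ideal $J:=(I(D\setminus e):f)$ explicitly, where $f=x_iy_{k_i}^{w_{y_{k_i}}}$, show it is generated by pure powers of pairwise distinct variables, and then read off its projective dimension by repeated use of Lemma \ref{lem2}(1).

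First I would use the standard description of the colon of a monomial ideal: if $u$ ranges over the generators of $I(D\setminus e)$, then $J$ is generated by the monomials $u/\gcd(u,f)$. Every such $u$ has the form $x_ay_b^{w_{y_b}}$ with $x_ay_b\in E(D)$ and $(a,b)\ne(i,k_i)$; since all edges are oriented away from $X$, this amounts to $1\le b\le k_a$ with $(a,b)\ne(i,k_i)$. I split into three cases according to whether $a=i$ or $b=k_i$: (i) $a=i$, $b<k_i$ produces $y_b^{w_{y_b}}$; (ii) $a\ne i$, $b=k_i$ (which forces $k_a\ge k_i$) produces $x_a$; (iii) $a\ne i$, $b\ne k_i$ produces $x_ay_b^{w_{y_b}}$.

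The key step is to show that every generator arising in case (iii) is redundant. Indeed, if $k_a\ge k_i$ then $x_a$ already occurs (case (ii)) and divides $x_ay_b^{w_{y_b}}$; if $k_a<k_i$ then, by the neighbourhood inclusion $N_D^{+}(x_a)=\{y_1,\dots,y_{k_a}\}$ recorded in Remark \ref{rem1}, the edge $x_ay_b$ forces $b\le k_a<k_i$, so $y_b^{w_{y_b}}$ already occurs (case (i)) and divides $x_ay_b^{w_{y_b}}$. Hence
$$J=(y_1^{w_{y_1}},\dots,y_{k_i-1}^{w_{y_{k_i-1}}})+(x_a\mid a\ne i,\ y_{k_i}\in N_D^{+}(x_a)).$$
I expect this absorption argument, which is precisely where the gap-free hypothesis enters (through the chain of neighbourhoods of Lemma \ref{lem6}), to be the main obstacle; everything else is bookkeeping.

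Finally I would count generators and conclude. The listed generators are pure powers of pairwise distinct variables, so $J$ is a sum of principal ideals supported on disjoint variable sets; since a principal monomial ideal has projective dimension $0$, applying Lemma \ref{lem2}(1) once for each new generator gives $\mbox{pd}\,(J)=(\text{number of generators})-1$. There are $k_i-1$ generators among the $y$'s and $d(y_{k_i})-1$ generators among the $x_a$ with $a\ne i$ adjacent to $y_{k_i}$ (the total number of vertices of $X$ adjacent to $y_{k_i}$ being $d(y_{k_i})$). As $x_i$ is a source we have $k_i=d(x_i)$, so the total is $(d(x_i)-1)+(d(y_{k_i})-1)=d(x_i)+d(y_{k_i})-2$, whence $\mbox{pd}\,(J)=d(x_i)+d(y_{k_i})-3$. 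The assumption $|E(D)|\ge2$ is used only to ensure $J\ne0$, i.e. $d(x_i)+d(y_{k_i})\ge3$, since a gap-free graph is connected and so cannot reduce to the single edge $e$ once it carries at least two edges.
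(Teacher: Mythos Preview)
Your proof is correct and follows essentially the same approach as the paper: both compute the colon ideal explicitly as a sum of pure powers of pairwise distinct variables and then read off the projective dimension via Lemma~\ref{lem2}(1). The paper organizes the computation by introducing $p=\min\{j\mid y_{k_i}\in N_D^{+}(x_j)\}$ and splitting into cases on $p$ and $k_i$, while you organize it by the generator-by-generator absorption argument; the resulting ideal and the final count are identical, and your presentation makes the redundancy step (which the paper leaves as ``direct calculation'') more transparent.
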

\begin{proof}
Let $p=\min\{j\,|\,y_{k_i}\in N_D^{+}{(x_j)}\}$, then $p\leq i$ and $d(y_{k_i})=\ell-p+1$. Claim: $p<\ell$ or $k_i>1$. Otherwise, $k_p=m=1$ and $\ell=1$  by  connectivity of $G$ and the choice of $p$. This implies $|E(D)|=1$, a contradiction.
By direct calculation, we get
$$(I(D\setminus e):x_iy_{k_i}^{w_{y_{k_i}}})=\left\{\begin{array}{ll}
\sum\limits_{j=1}^{m-1}(y_j^{w_{y_j}})\ \ &\text{if}\ \ p=\ell, \ m>1,\\
J_i\ \ &\text{if} \ \ p<\ell, \ k_i=1,\\
J_i+\sum\limits_{j=1}^{k_i-1}(y_j^{w_{y_j}})\ \ &\text{if} \ \ p<\ell, \ k_i>1.
\end{array}\right.$$
where the minimal  set $\mathcal {G}(J_i)$ of  monomial generators of $J_i$ is $\{x_{p},\ldots, x_{\ell}\}\setminus \{x_i\}$. By Lemma \ref{lem2} (1), we obtain
\begin{eqnarray*}
\mbox{pd}\,((I(D\setminus e)\!:\!x_iy_{k_i}^{w_{y_{k_i}}}))\!\!\!\!&=&\!\!\!\!\left\{\begin{array}{ll}
\mbox{pd}\,(\sum\limits_{j=1}^{m-1}(y_j^{w_{y_j}}))  & \text{if} \ p=\ell, \ m>1,\\
\mbox{pd}\,(J_i)\  &\text{if}  \ p<\ell,\ k_i=1,\\
\mbox{pd}\,(J_i)\!+\!\mbox{pd}\,(\sum\limits_{j=1}^{k_i-1}(y_j^{w_{y_j}}))\!+\!1  &\text{if} \ p<\ell,\ k_i>1.
\end{array}\right.\\
\!\!\!\!&=&\!\!\!\!\left\{\begin{array}{ll}
m-2\ \ &\text{if}\ \ p=\ell, \ m>1,\\
\ell-p-1\ \ &\text{if} \ \ p<\ell, \ k_i=1,\\
(\ell-p-1)+(k_i-2)+1\ \ &\text{if} \ \ p<\ell, \ k_i>1.
\end{array}\right.\\
&=&k_i+(\ell-p+1)-3\\
&=&d(x_i)+d(y_{k_i})-3.
\end{eqnarray*}
This finishes the proof.
\end{proof}

\medskip
Now, We are ready to prove  the first major result of this section.
\begin{Theorem}\label{thm1}
Let $D=(V(D),E(D),w)$ be a weighted oriented gap-free bipartite graph with bipartition $X=\{x_1,\ldots,x_{\ell}\}$, $Y=\{y_{1},\ldots,y_{m}\}$.
 Let  $N_{D}^{+}(x_i)=\{y_1,\ldots,y_{k_i}\}$ for $1\leq i\leq \ell$, then
$$\mbox{pd}\,(I(D))=\mbox{max}\,\{d(x_i)+d(y_{k_i})\mid x_i\in X\}-2.$$
\end{Theorem}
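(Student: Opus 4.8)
The plan is to induct on the number of edges $|E(D)|$, using the short exact sequence obtained by deleting a carefully chosen edge. Write $r_D=\max\{d(x_i)+d(y_{k_i})\mid x_i\in X\}-2$. When $|E(D)|=1$ the ideal $I(D)$ is principal, so $\pd(I(D))=0=(1+1)-2=r_D$, settling the base case. For the inductive step I would fix an index $q$ attaining the maximum, so that $d(x_q)+d(y_{k_q})=r_D+2$, and set $e=x_qy_{k_q}$ and $f=x_qy_{k_q}^{w_{y_{k_q}}}$. Since $f\notin I(D\setminus e)$ and $I(D)=(I(D\setminus e),f)$, multiplication by $f$ gives the exact sequence
$$0\to S/(I(D\setminus e):f)\xrightarrow{\ \cdot f\ }S/I(D\setminus e)\to S/I(D)\to 0,$$
to which I would apply Lemma~\ref{lem5}, passing between $\pd(I(-))$ and $\pd(S/I(-))$ by Lemma~\ref{lem1}(1).

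The two flanking modules are computed as follows. By Lemma~\ref{lem9} with Lemma~\ref{lem1}(1),
$$\pd\big(S/(I(D\setminus e):f)\big)=\pd\big((I(D\setminus e):f)\big)+1=d(x_q)+d(y_{k_q})-2=r_D.$$
For the middle term, Lemma~\ref{lem7} shows that the unique component of $D\setminus e$ carrying an edge is again a gap-free bipartite graph $D'$ (oriented away from its $X$-part, with all relevant weights unchanged), and $I(D\setminus e)=I(D')$ since isolated vertices do not affect the edge ideal; hence the induction hypothesis gives $\pd(S/I(D\setminus e))=r_{D'}+1$. The crucial numerical input is $r_{D'}\le r_D$, which I would verify by the degree bookkeeping already prepared: in all three cases of Lemma~\ref{lem7} every vertex $x_i$ with $i\ne q$ keeps its neighbourhood, so its contribution can only drop, while for $x_q$ the last neighbour becomes $y_{k_q-1}$ and Lemma~\ref{lem8} (applicable because $k_q=d(x_q)\ge 2$ whenever $x_q$ survives) bounds $d(x_q)+d(y_{k_q-1})-1$ by $r_D+2$. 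Applying Lemma~\ref{lem4}(1) to $I(D\setminus e)$ in the reverse direction gives $\pd(S/I(D\setminus e))\ge r_D$, so in fact $r_{D'}\in\{r_D-1,r_D\}$.

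With these values in hand, Lemma~\ref{lem5}(3) yields $\pd(S/I(D))\le\max\{r_D+1,r_{D'}+1\}=r_D+1$, which is the upper bound $\pd(I(D))\le r_D$. For the matching lower bound, if $r_{D'}=r_D$ then $\pd(S/I(D\setminus e))=r_D+1>r_D=\pd(S/(I(D\setminus e):f))$, and Lemma~\ref{lem5}(2) forces $\pd(S/I(D))\ge r_D+1$, so equality holds.

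The main obstacle is the \emph{tight} case $r_{D'}=r_D-1$, where the two flanking terms have equal projective dimension $r_D$ and Lemma~\ref{lem5} no longer pins down the middle term; this case genuinely occurs (every complete bipartite graph produces it). I would resolve it by examining the connecting map on top homology: since $\pd(S/I(D\setminus e))=r_D$, the long exact $\Tor$ sequence identifies $\Tor_{r_D+1}(S/I(D),k)$ with the kernel of $\Tor_{r_D}(S/(I(D\setminus e):f),k)\to\Tor_{r_D}(S/I(D\setminus e),k)$. By the explicit description in Lemma~\ref{lem9}, $(I(D\setminus e):f)$ is a complete intersection of $r_D$ generators (distinct variables $x_j$ together with distinct pure powers $y_j^{w_{y_j}}$), so its top $\Tor$ is one-dimensional and concentrated in a single internal degree; comparing that degree, shifted by $\deg f$, against the top graded Betti numbers of $S/I(D\setminus e)$ should show the map vanishes, whence $\Tor_{r_D+1}(S/I(D),k)\ne 0$ and $\pd(I(D))=r_D$. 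As a consistency check, for a complete bipartite $D$ one can bypass the induction: $I(D)=(x_1,\dots,x_\ell)\cdot(y_1^{w_{y_1}},\dots,y_m^{w_{y_m}})$ is a product of ideals in disjoint variables, so Lemma~\ref{lem3} gives $\pd(I(D))=(\ell-1)+(m-1)=r_D$ directly.
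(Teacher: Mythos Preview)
Your upper bound argument is essentially identical to the paper's: the same edge $e=x_qy_{k_q}$, the same exact sequence, Lemma~\ref{lem9} for the colon term, Lemma~\ref{lem7} and Lemma~\ref{lem8} for the degree bookkeeping on $D\setminus e$, and Lemma~\ref{lem5}(3) to conclude $\pd(I(D))\le r_D$. Your treatment of the non-tight case $r_{D'}=r_D$ via Lemma~\ref{lem5}(2) is also fine.

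The genuine gap is the tight case $r_{D'}=r_D-1$. You correctly identify that Lemma~\ref{lem5} gives no information here, and you propose to show that the connecting map $\Tor_{r_D}(A,k)\to\Tor_{r_D}(B,k)$ vanishes by a degree comparison. But this is left at the level of ``should show'': you do not compute the top graded Betti numbers of $S/I(D\setminus e)$ (the induction hypothesis only gives $\pd$, not where the top Betti numbers sit), and you do not verify the degrees actually mismatch. Moreover, the tight case is \emph{not} confined to complete bipartite graphs, so handling that case directly via Lemma~\ref{lem3} does not close the gap. For instance, take $\ell=m=3$ with $k_1=1$, $k_2=k_3=3$; then $r_D=3$, and choosing $q=2$ gives $D'=D\setminus\{x_2y_3\}$ with $r_{D'}=2=r_D-1$, yet $D$ is not complete bipartite.

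The paper avoids this obstacle entirely by proving the lower bound through a \emph{different} colon ideal, not the one coming from the exact sequence. After disposing of the complete bipartite case directly, one has $k_1<m$. If $k_q<m$, one takes $(I(D):y_{k_q+1}^{w_{y_{k_q+1}}})$; if $k_q=m$ (so $q\ge 2$), one takes $(I(D):x_{p'})$ with $p'=\max\{j:k_j<m\}$. In either case this colon ideal decomposes as $I(D'')+(\text{variables or pure powers})$ with $D''$ a smaller gap-free bipartite graph, so Lemma~\ref{lem2}(1) together with the induction hypothesis and Lemma~\ref{lem4}(1) gives $\pd(I(D))\ge\pd((I(D):\text{element}))=r_D$ by a direct count. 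This sidesteps any Tor analysis and keeps the argument purely at the level of projective dimensions.
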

\begin{proof}
We apply induction on $|E(D)|$.  Case  $|E(D)|=1$ is  obvious. Now assume   $|E(D)|\geq 2$.
If  $D$ is a  complete bipartite graph, then  $I(D)\!=\!(x_1,\ldots,x_{\ell})(y_1^{w_{y_1}}\!\!\!,\!\ldots,y_{m}^{w_{y_m}})$ and
$d(x_i)+d(y_{k_i})=\ell+m$ for any  $i\in [\ell]$.
By Lemma \ref{lem2} (1) and Lemma \ref{lem3} (1),
we get
\begin{eqnarray*}
\mbox{pd}\,(I(D))&=&\mbox{pd}\,((x_1,\!\ldots,\!x_{\ell}))+\mbox{pd}\,((y_1^{w_{y_1}}\!\!\!,\ldots,\!y_{m}^{w_{y_m}}))\\
&=&(\ell-1)+(m-1)=\mbox{max}\,\{d(x_i)+d(y_{k_i})\mid x_i\in X\}-2.
\end{eqnarray*}

Now assume that $D$ is not a  complete bipartite graph, then $k_1<m$. Let $d(x_q)+d(y_{k_q})=\mbox{max}\,\{d(x_i)+d(y_{k_i})\mid x_i\in X\}$ for some $ q\in [\ell]$ and
$e=x_qy_{k_q}$, then  $d(x_q)\geq 2$ or $d(y_{k_q})\geq 2$ by similar arguments in Lemma \ref{lem7}.
Let $D'$ be the biggest subgraph of $D\setminus e$ without isolated vertices, where $D\setminus e$  obtained from  $D$ by deleting edge $e$, then
$$D\setminus e=\left\{\begin{array}{ll}
D'\cup \{x_{q}\} \ \ &\text{if}\ \ d(x_q)=1, \ d(y_{k_q})\geq 2\\
D'\cup \{y_{k_q}\}\ \ &\text{if} \ \ d(x_q)\geq 2, \ d(y_{k_q})=1\\
D'\ \ &\text{if} \ \ d(x_q)\geq 2, \ d(y_{k_q})\geq 2
\end{array}.\right.$$
Thus $I(D')=I(D\setminus e)$. For any  $x_i\in V(D')$, if  $d(x_q)=1$ and $d(y_{k_q})\geq 2$, then $N_{D'}^{+}(x_i)=\{y_1, \ldots, y_{k_i}\}$,   otherwise,
$N_{D'}^{+}(x_i)=\{y_1, \ldots, y_{k'_i}\}$ where  if $i=q$, then $k'_i={k_{q}}-1$, otherwise, $k'_i={k_{q}}$. Moreover, if
 $d(y_{k_q})=1$, then $q=\ell$
by inclusion relations of neighbourhoods of elements in  $X$.  Case $d(x_q)=1$ and $d(y_{k_q})\geq 2$ can be  proved  by  similar arguments as other two cases, we omit it.
It remains to be show that $\mbox{pd}\,(I(D\setminus e))\leq d(x_q)+d(y_{k_q})-2$ if
 $d(x_q)\geq 2$, $d(y_{k_q})=1$, or $d(x_q)\geq 2$, $d(y_{k_q})\geq 2$.

 For any $z\in V(D')$, let  $d_{D'}(z)$ denote the degree of $z$ in  $D'$.
We have
\begin{eqnarray*}
\hspace{3cm}d_{D'}(x_i)+d_{D'}(y_{k'_i})&=& \left\{\begin{array}{ll}
d(x_q)+d(y_{k_q-1})-1 \ &\text{if} \ i=q,\\
d(x_i)+d(y_{k_i}) \ &\text{if}  \ i\neq q.\\
\end{array}\right.\\
&\leq& d(x_q)+d(y_{k_q}),    \hspace{4.5cm} (1)
\end{eqnarray*}
where the above inequality holds by Lemma \ref{lem8}.

By Lemma \ref{lem7} and induction hypothesis, we obtain
\begin{eqnarray*}
\mbox{pd}\,(I(D\setminus e))&=&\mbox{max}\,\{d_{D'}(x_i)+d_{D'}(y_{k'_i})\mid x_i\in V(D')\}-2\\
&\leq& d(x_q)+d(y_{k_q})-2. \hspace{7.8cm} (2)
\end{eqnarray*}

First, we will prove $\mbox{pd}\,(I(D))\leq \mbox{max}\,\{d(x_i)+d(y_{k_i})\mid x_i\in X\}-2$.

Consider the  exact sequence
$$\hspace{1.0cm} 0\longrightarrow \frac{S}{(I(D\setminus e):x_qy_{k_q}^{w_{y_{k_q}}})}\stackrel{ \cdot x_{q}y_{k_q}^{w_{y_{k_q}}}} \longrightarrow \frac{S}{I(D\setminus e)}\longrightarrow \frac{S}{I(D)}\longrightarrow 0\hspace{2.5cm}(*) $$
By  Lemma \ref{lem1} (1), Lemma \ref{lem5} (3),   Lemma \ref{lem9} and  formula (2),  we obtain
\begin{eqnarray*}
\mbox{pd}\,(I(D))&\leq&\max\,\{\mbox{pd}\,((I(D\setminus e):x_qy_{k_q}^{w_{y_{k_q}}}))+1,\mbox{pd}\,(I(D\setminus e))\}\\
&\leq&\max\,\{(d(x_q)+d(y_{k_q})-3)+1, d(x_q)+d(y_{k_q})-2\}\\
&=& d(x_q)+d(y_{k_q})-2 \hspace{7.8cm} (3)
\end{eqnarray*}

To obtain the desired conclusion, it is enough to prove that $\mbox{pd}\,(I(D))\geq\mbox{max}\,\{d(x_i)+d(y_{k_i})\mid x_i\in X\}-2$. We distinguish into  the following two cases:

(i) If $k_q<m$, then $q<\ell$. Let $p=\mbox{min}\,\{j\mid x_{j}\in N_D^{-}(y_{k_q+1})\}$, then $p\geq q+1$. It follows that
$$(I(D):y_{k_{q}+1}^{w_{y_{k_{q}+1}}})=(I(D''),x_p,\ldots,x_{\ell})$$
where $D''=D\setminus \{x_p,\ldots,x_{\ell},y_{k_{q}+1},\ldots,y_{m}\}$.

 For any $z\in V(D'')$, let  $d_{D''}(z)$ denote the degree of $z$ in  $D''$,
then  $N_{D''}^{+}(x_i)=\{y_1, \ldots, y_{k_i}\}$  and $d_{D''}(y_{k_i})=d(y_{k_i})-(\ell-p+1)$ by the choice of $p$ and  $x_j\in N_D^{-}(y_{k_i})$  for any $p\leq j\leq \ell$.
Hence
$$\mbox{max}\,\{d_{D''}(x_i)+d_{D''}(y_{k_i})\mid x_i\in V(D'')\}=d(x_q)+d(y_{k_q})-(\ell-p+1).$$
By Lemma \ref{lem2} (1), Lemma \ref{lem4} (1) and induction hypothesis, we obtain
\begin{eqnarray*}
\mbox{pd}\,(I(D))\!\!\!\!&\geq&\!\!\!\! \mbox{pd}\,((I(D):y_{k_{q}+1}^{w_{y_{k_{q}+1}}}))=\mbox{pd}\,(I(D''))+\mbox{pd}\,((x_p,\ldots,x_{\ell}))+1\\
\!\!\!\!&=&\!\!\!\!\mbox{max}\,\{d_{D''}(x_i)+d_{D''}(y_{k_i})\mid x_i\in V(D'')\}-2+(\ell-p)+1\\
\!\!\!\!&=&\!\!\!\!d(x_q)+d(y_{k_q})-(\ell-p+1)-2+(\ell-p)+1.\\
&=&d(x_q)+d(y_{k_q})-2.
\end{eqnarray*}

(ii) If $k_q=m$, then  $q\geq 2$  because of $k_1<m$. Let $p'=\mbox{max}\,\{1\leq j\leq \ell\mid k_j<m\}$, then
$$(I(D):x_{p'})=(I(D'''),y_1^{w_{y_1}},\ldots,y_{k_{p'}}^{w_{k_{p'}}}),$$
where $D'''=D\setminus \{x_1,\ldots,x_{p'},y_1,\ldots,y_{k_{p'}}\}$. For any $z\in V(D''')$,  let $d_{D'''}(z)$ denote the degree of $z$ in  $D'''$,
then  $N_{D'''}^{+}(x_i)=\{y_{k_{p'}+1}, \ldots, y_{k_i}\}$  and $d_{D'''}(x_i)=d(x_i)-k_{p'}$.
Since $N_{D}^{+}(x_{p'})=\{y_1,\ldots,y_{k_{p'}}\}$,   for $x_i\in V(D''')$, by the choice of $p'$, we have $N_{D'''}^{-}(y_{k_i})=N_{D'''}^{-}(y_{k_{p'+1}})=\{x_{p'+1},\ldots, x_{\ell}\}=N_{D}^{-}(y_{k_i})$. Hence $d_{D'''}(y_{k_i})=d(y_{k_i})$.
It follows that
\begin{eqnarray*}
\mbox{max}\,\{d_{D'''}(x_i)+d_{D'''}(y_{k_i})\mid x_i\in V(D''')\}\!\!\!&=&\!\!\!\mbox{max}\,\{d(x_i)-k_{p'}+d(y_{k_i})\mid x_i\in V(D''')\}\\
\!\!\!&=&\!\!\!d(x_q)+d(y_{k_q})-k_{p'}
\end{eqnarray*}
Therefore,  by Lemma \ref{lem2} (1), Lemma \ref{lem4} (1) and induction hypothesis, we get
\begin{eqnarray*}
\mbox{pd}\,(I(D))&\geq& \mbox{pd}\,((I(D):x_{p}))
=\mbox{pd}\,((I(D'''))+\mbox{pd}\,((y_1^{w_{y_1}},\ldots,y_{k_{p'}}^{w_{k_{p'}}}))+1\\
&=&\mbox{max}\,\{d_{D'''}(x_i)+d_{D'''}(y_{k_i})\mid x_i\in V(D''')\}-2+(k_{p'}-1)+1 \\
&=&d(x_q)+d(y_{k_q})-2.
\end{eqnarray*}
In short,
\[\mbox{pd}\,(I(D))\geq d(x_q)+d(y_{k_q})-2=\mbox{max}\,\{d(x_i)+d(y_{k_i})\mid x_i\in X\}-2.\eqno (4)
\]
By formulas (3) and (4), we have
\[\mbox{pd}\,(I(D))=\mbox{max}\,\{d(x_i)+d(y_{k_i})\mid x_i\in X\}-2.
\]
The proof is completed.
\end{proof}

\medskip
\begin{Corollary}\label{cor1}
Let $D=(V(D),E(D),w)$ be a weighted oriented  graph as  Theorem \ref{thm1}. Then
$$\mbox{max}\,\{\ell-1,m-1\} \leq\mbox{pd}\,(I(D))\leq |V(D)|-2.$$
Furthermore, $\mbox{pd}\,(I(D))$ attains this upper bound if and only if $D$ is a  complete bipartite graph.
\end{Corollary}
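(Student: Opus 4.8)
The plan is to read everything off the closed formula of Theorem~\ref{thm1}, namely $\mbox{pd}\,(I(D))=\mbox{max}\,\{d(x_i)+d(y_{k_i})\mid x_i\in X\}-2$; write $M$ for the maximum appearing there. First I would record the two elementary degree identities forced by the orientation: since every edge of $D$ leaves $X$ and $N_D^{+}(x_i)=\{y_1,\ldots,y_{k_i}\}$, we have $d(x_i)=k_i\le m$, while $y_j$ is adjacent to $x_i$ exactly when $j\le k_i$, so $d(y_j)=|\{i\mid k_i\ge j\}|\le \ell$. Since $|V(D)|=\ell+m$, the whole corollary reduces to the sandwich $\mbox{max}\,\{\ell,m\}+1\le M\le \ell+m$ together with a description of when the right-hand inequality is an equality.

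For the lower bound I would exhibit two specific indices. Taking $i=\ell$ gives $d(x_\ell)=k_\ell=m$ and $d(y_{k_\ell})=d(y_m)\ge 1$ (no isolated vertices), so $M\ge m+1$. Taking $i=1$, the point $y_{k_1}$ lies in $N_G(x_i)$ for every $i$ because $k_1=\min_i k_i$ --- this is exactly the nesting of neighbourhoods from Lemma~\ref{lem6} --- whence $d(y_{k_1})=\ell$; combined with $d(x_1)=k_1\ge 1$ this yields $M\ge \ell+1$. Together these give $M\ge \mbox{max}\,\{\ell,m\}+1$, i.e.\ $\mbox{pd}\,(I(D))\ge \mbox{max}\,\{\ell-1,m-1\}$.

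The upper bound is immediate from the two inequalities $d(x_i)\le m$ and $d(y_{k_i})\le \ell$ recorded above, which force $M\le \ell+m$ and hence $\mbox{pd}\,(I(D))\le |V(D)|-2$. For the equality clause I would argue that $\mbox{pd}\,(I(D))=|V(D)|-2$ is equivalent to $M=\ell+m$, and that $M=\ell+m$ forces some index $i$ to make both summands tight at once: $d(x_i)=m$ gives $k_i=m$ and hence $y_{k_i}=y_m$, and then $d(y_{k_i})=d(y_m)=\ell$ says $y_m$ is joined to all of $X$, so $k_j=m$ for every $j$, i.e.\ $D$ is complete bipartite. The converse is the trivial check that in a complete bipartite $D$ every $k_j=m$, so $d(x_i)+d(y_{k_i})=m+\ell$ for all $i$ and equality holds.

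There is no serious obstacle here: once Theorem~\ref{thm1} is granted, the corollary is pure bookkeeping in the integers $k_1\le\cdots\le k_\ell=m$. The only two places demanding a moment's care are the identity $d(y_{k_1})=\ell$ that powers the lower bound $\ell-1$ (it relies on $k_1$ being the minimum, via Lemma~\ref{lem6}) and the forward direction of the equality characterization, where one must pass from a single tight index to the global conclusion $k_j=m$ for all $j$.
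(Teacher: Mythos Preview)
Your proof is correct and follows essentially the same approach as the paper: both read the bounds directly off the formula of Theorem~\ref{thm1}, use the indices $i=1$ and $i=\ell$ together with Lemma~\ref{lem6} to get $d(y_{k_1})=\ell$ and $d(x_\ell)=m$ for the lower bound, bound each summand by $m$ and $\ell$ for the upper bound, and unwind the equality case through the same chain ($d(x_q)=m$, $d(y_{k_q})=\ell$ $\Rightarrow$ $k_q=m$ and $y_m$ adjacent to $x_1$ $\Rightarrow$ $k_1=m$ $\Rightarrow$ $D$ complete bipartite). Your write-up is slightly more explicit about why $d(y_{k_1})=\ell$ and about the passage from a single tight index to all $k_j=m$, but the argument is the same.
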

\begin{proof} From Lemma \ref{lem6} (1), we have $d(y_{k_1})=\ell$, $d(x_{\ell})=m$. It follows that $\mbox{pd}\,(I(D))\geq \mbox{max}\,\{\ell-1,m-1\}$
 by  the above theorem. On the other hand, notice that $|V(D)|=\ell+m$, $d(x_i)\leq m$ and $d(y_{k_i})\leq \ell$ for any $i\in [\ell]$, thus, by
 Theorem \ref{thm1}, we have $\mbox{pd}\,(I(D))\leq |V(D)|-2$, where the equality holds  if and only if there exists some $q\in [\ell]$ such that
 $d(x_q)=m$ and $d(y_{k_q})=\ell$, if and only if $k_q=m$ and  $x_1\in N_D^{-}(y_m)$, if and only if  $d(x_1)=m$, if and only if  $D$ is a complete bipartite graph.
\end{proof}

\medskip
As a consequence of Theorem \ref{thm1} and  Corollary \ref{cor1}, one has the following result.
\begin{Corollary}\label{cor2}
Let $s$ be a positive integer and  $D=\coprod\limits_{i=1}^{s}D_{i}$ be the disjoint union of $s$ weighted oriented gap-free bipartite graphs $D_i$ with bipartition $X_i,Y_i$ for $1\leq i\leq s$.
Let $b_i=\mbox{max}\,\{|X_i|,|Y_i|\}$ and  $N_{D_i}^{+}(x_{i_j})=\{y_{i_1},\ldots,y_{i_{k_j}}\}$ for any $x_{i_j}\in X_i$. If the orientation of $D$ is
as  Definition \ref{def2}, then
\begin{itemize}
\item[(1)] $\mbox{pd}\,(I(D))=\sum\limits_{i=1}^{s}r_{D_i}+s-1,$
where $r_{D_i}=\mbox{max}\,\{d(x_{i_j})+d(y_{i_{k_j}})\mid x_{i_j}\in X_i\}-2$,
\item[(2)] $\sum\limits_{i=1}^{s}b_{i}-1\leq\mbox{pd}\,(I(D))\leq|V|-s-1$, where $V$ is  the vertex set of $D$.
\end{itemize}
Furthermore, $\mbox{pd}\,(I(D))$ attains this upper bound if and only if every  $D_{i}$ is a complete bipartite graph.
\end{Corollary}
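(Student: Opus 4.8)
The plan is to exploit the fact that $D=\coprod_{i=1}^{s}D_{i}$ is a disjoint union, so that the edge ideals $I(D_1),\ldots,I(D_s)$ involve pairwise disjoint sets of variables. Viewing $S$ as the polynomial ring on all of $V$, we have $I(D)=\sum_{i=1}^{s}I(D_i)$, where each $I(D_i)$ is supported on the variables indexed by $V(D_i)$. Since each $D_i$ is a gap-free bipartite graph without isolated vertices, and gap-free graphs are connected, every $D_i$ carries at least one edge; hence each $I(D_i)$ is a nonzero homogeneous ideal. This is exactly the situation covered by Lemma \ref{lem2}.

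For part (1), I would argue by induction on $s$ using Lemma \ref{lem2} (1). A single application gives $\mbox{pd}\,(I(D_1)+I(D_2))=\mbox{pd}\,(I(D_1))+\mbox{pd}\,(I(D_2))+1$, and iterating over the $s$ summands (each step contributing one extra $+1$) yields
\[
\mbox{pd}\,(I(D))=\sum_{i=1}^{s}\mbox{pd}\,(I(D_i))+(s-1).
\]
Theorem \ref{thm1}, applied to each component $D_i$, identifies every term as $\mbox{pd}\,(I(D_i))=\mbox{max}\,\{d(x_{i_j})+d(y_{i_{k_j}})\mid x_{i_j}\in X_i\}-2=r_{D_i}$. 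Substituting gives $\mbox{pd}\,(I(D))=\sum_{i=1}^{s}r_{D_i}+(s-1)$, which is the asserted formula.

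For part (2), I would combine the identity from (1) with the bounds of Corollary \ref{cor1} applied to each factor separately. That corollary gives $\mbox{max}\,\{|X_i|-1,|Y_i|-1\}\leq r_{D_i}\leq |V(D_i)|-2$ for each $i$; since $b_i=\mbox{max}\,\{|X_i|,|Y_i|\}$, the left-hand side equals $b_i-1$. Summing these inequalities over $i$ and adding $s-1$ produces on the left $\sum_{i=1}^{s}(b_i-1)+(s-1)=\sum_{i=1}^{s}b_i-1$ and on the right $\sum_{i=1}^{s}(|V(D_i)|-2)+(s-1)=|V|-s-1$, using $\sum_i |V(D_i)|=|V|$. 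This is precisely the two-sided bound.

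Finally, for the equality clause, the upper bound $|V|-s-1$ is attained exactly when every summand $r_{D_i}$ meets its own maximum $|V(D_i)|-2$, because the inequalities being summed are independent and the intermediate constant $s-1$ is fixed. By the equality statement of Corollary \ref{cor1}, $r_{D_i}=|V(D_i)|-2$ holds if and only if $D_i$ is a complete bipartite graph, whence $\mbox{pd}\,(I(D))=|V|-s-1$ if and only if every $D_i$ is complete bipartite. I expect no genuine obstacle here beyond careful bookkeeping of the $s-1$ accumulated $+1$ terms in the iterated use of Lemma \ref{lem2} (1); the only nontrivial point to record is that each $I(D_i)$ is nonzero, which is where the no-isolated-vertex hypothesis together with connectivity of gap-free graphs is used.
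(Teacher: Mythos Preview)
Your proposal is correct and follows exactly the route the paper indicates: the paper simply states that Corollary \ref{cor2} is ``a consequence of Theorem \ref{thm1} and Corollary \ref{cor1}'' with no further proof, and your use of Lemma \ref{lem2} (1) to pass to the disjoint union, together with the componentwise application of those two results, is precisely what is intended.
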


\medskip
Now, we are  ready to prove another major result of this section.
\begin{Theorem}\label{thm2}
Let $D=(V(D),E(D),w)$ be a weighted oriented gap-free bipartite graph. Then
$$\mbox{reg}\,(I(D))=\sum\limits_{x\in V(D)}w(x)-|V(D)|+2.$$
\end{Theorem}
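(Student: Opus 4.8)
The plan is to prove the equivalent statement $\reg(I(D))=\sum_{y\in Y}w(y)-m+2=:R_D$, where $X=\{x_1,\dots,x_\ell\}$, $Y=\{y_1,\dots,y_m\}$ is the bipartition. Since every $x_i$ is a source we have $w(x_i)=1$, so $\sum_{x\in V(D)}w(x)-|V(D)|+2=\ell+\sum_j w_{y_j}-(\ell+m)+2=R_D$; thus only the $Y$-weights and $m=|Y|$ enter. I would induct on $|E(D)|$. The base case $|E(D)|=1$ is a principal ideal $(xy^{w_y})$ of regularity $1+w_y=R_D$. When $D$ is complete bipartite, $I(D)=(x_1,\dots,x_\ell)(y_1^{w_{y_1}},\dots,y_m^{w_{y_m}})$ is a product of two ideals in disjoint variables, so Lemma \ref{lem3}(2) gives $\reg(I(D))=\reg((x_1,\dots,x_\ell))+\reg((y_1^{w_{y_1}},\dots,y_m^{w_{y_m}}))$; the first summand is $1$, and iterating Lemma \ref{lem2}(2) over the powers of distinct variables yields $\reg((y_1^{w_{y_1}},\dots,y_m^{w_{y_m}}))=\sum_j w_{y_j}-m+1$, for a total of $R_D$.

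For the inductive step assume $D$ is not complete bipartite, so $\ell\ge 2$. The upper bound $\reg(I(D))\le R_D$ follows from Lemma \ref{lem4}(2) applied to the degree-one monomial $f=x_\ell$. Because $x_\ell$ is adjacent to every vertex of $Y$, one computes $(I(D):x_\ell)=(y_1^{w_{y_1}},\dots,y_m^{w_{y_m}})$, a complete intersection of regularity $\sum_j w_{y_j}-m+1$ by iterated Lemma \ref{lem2}(2), so $\reg((I(D):x_\ell))+1=R_D$. On the other hand $(I(D),x_\ell)=(I(D\setminus x_\ell),x_\ell)$, and since $x_\ell\notin\operatorname{supp}(I(D\setminus x_\ell))$ we get $\reg((I(D),x_\ell))=\reg(I(D\setminus x_\ell))$; the graph $D\setminus x_\ell$ (after deleting isolated vertices) is again a weighted oriented gap-free bipartite graph with fewer edges whose $Y$-weights form a subset of those of $D$, so by induction $\reg(I(D\setminus x_\ell))\le R_D$. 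Both alternatives in Lemma \ref{lem4}(2) are thus at most $R_D$, giving $\reg(I(D))\le R_D$.

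The lower bound is the heart of the matter, and I would extract it from the star centred at $x_\ell$. Among the minimal generators of $I(D)$, exactly the $m$ monomials $x_\ell y_1^{w_{y_1}},\dots,x_\ell y_m^{w_{y_m}}$ divide $\tau:=x_\ell y_1^{w_{y_1}}\cdots y_m^{w_{y_m}}$, since any other generator $x_iy_j^{w_{y_j}}$ with $i\ne\ell$ carries the variable $x_i\notin\operatorname{supp}(\tau)$. As the multigraded Betti number $\beta_{i,\tau}(I(D))$ depends only on the generators dividing $\tau$, it equals the corresponding Betti number of $x_\ell\cdot(y_1^{w_{y_1}},\dots,y_m^{w_{y_m}})$, i.e. of $x_\ell$ times a complete intersection of $m$ pure powers; its top homological Betti number $\beta_{m-1,\tau}$ is nonzero and $\deg\tau=1+\sum_j w_{y_j}$. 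Hence $\beta_{m-1,\tau}(I(D))\ne 0$, so $\reg(I(D))\ge \deg\tau-(m-1)=\sum_j w_{y_j}-m+2=R_D$, and together with the upper bound this gives equality.

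The step I expect to be the real obstacle is precisely this lower bound. The base case, the complete-bipartite case, and the upper bound all follow formally from the stated lemmas, whereas $\reg(I(D))\ge R_D$ requires locating a specific extremal syzygy. The cleanest route is the multidegree argument above (equivalently, the monotonicity $\reg(I(H))\le\reg(I(D))$ for the induced star $H$ on $\{x_\ell\}\cup Y$, provable by polarization and Hochster's formula). If instead one wishes to stay with the short exact sequence and Lemma \ref{lem5}(1), its equality clause settles every case except the borderline $\reg(I(D\setminus x_\ell))=R_D-1$, and it is exactly there that the surviving top syzygy of the star at $x_\ell$ must be invoked to rule out a drop in regularity.
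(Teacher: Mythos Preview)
Your argument is correct, but it follows a different path from the paper's. The paper inducts on $m=|Y|$ and takes the short exact sequence associated to multiplication by $y_1^{w_{y_1}}$ rather than by $x_\ell$. Since $y_1\in N_D^+(x_1)$ satisfies $N_D^-(y_1)=X$ by Lemma~\ref{lem6}(2), one gets $(I(D):y_1^{w_{y_1}})=(x_1,\dots,x_\ell)$, while $(I(D),y_1^{w_{y_1}})=(I(D\setminus P),y_1^{w_{y_1}})$ with $P=\{y_1\}\cup\{x_i:d(x_i)=1\}$. The point of this choice is that, by induction on $m$ together with Lemma~\ref{lem2}(2), the ideal $(I(D),y_1^{w_{y_1}})$ has regularity \emph{exactly} $R_D$, not merely $\le R_D$: the loss of $w_{y_1}-1$ from deleting $y_1$ is restored by the summand $(y_1^{w_{y_1}})$. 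Since the shifted colon term has regularity $1+w_{y_1}\le R_D$ with $1+w_{y_1}-1<R_D$, the equality clause of Lemma~\ref{lem5}(1) yields $\reg(I(D))=R_D$ directly, and no separate lower-bound step is needed.

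Your choice of $x_\ell$ instead produces $(I(D),x_\ell)$ of regularity $R_{D\setminus x_\ell}$, which can drop below $R_D$ whenever some $y_j$ is adjacent only to $x_\ell$; this is exactly why you are forced to supply the lower bound by hand, via the multigraded Betti number at $\tau=x_\ell\prod_j y_j^{w_{y_j}}$. That multidegree argument is valid (the Koszul homology of $S/I$ in multidegree $\tau$ depends only on the generators of $I$ dividing $\tau$) and has the pleasant feature of giving $\reg(I(D))\ge R_D$ in one stroke, without induction and without using the gap-free hypothesis---only $N_D^+(x_\ell)=Y$ is needed. The trade-off is that the paper's proof stays entirely within Lemmas~\ref{lem1}--\ref{lem5}, whereas your route invokes a standard but unstated fact about multigraded Betti numbers (equivalently, the monotonicity of regularity under restriction to a face of the ambient ring).
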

\begin{proof}
Let $X=\{x_1,\ldots,x_{\ell}\}$, $Y=\{y_{1},\ldots,y_{m}\}$ be two partitions of $D$.
We apply induction on $m$. Case $m=1$ follows from \cite[Theorem 3.1]{Z3}. Assume $m\geq 2$.
 By Lemma \ref{lem6} (2), $N_D^{-}(y_1)=X$, thus
$$(I(D):y_1^{w_{y_1}})=(x_1,\ldots,x_{\ell}).$$
It follows that
\begin{eqnarray*}
\mbox{reg}\,((I(D):y_1^{w_{y_1}})(-w_{y_1}))&=&\mbox{reg}\,((x_1,\ldots,x_{\ell}))+w_{y_1}=1+w_{y_1}\\
&=&\sum\limits_{x\in V(D)}w(x)-|V(D)|+2+(m-1)-\sum\limits_{i=2}^{m}w_{y_i}\\
&\leq&\sum\limits_{x\in V(D)}w(x)-|V(D)|+2. \hspace{3.2cm} (1)
\end{eqnarray*}

Note that $(I(D),y_1^{w_{y_1}})=(I(D\setminus P),y_1^{w_{y_1}})$, where $P=\{y_1\}\cup \{x_i\in X \mid d(x_i)=1\}$.
By Lemma  \ref{lem2} (2)  and induction hypothesis, we obtain
\begin{eqnarray*}
\mbox{reg}\,((I(D),y_1^{w_{y_1}}))&=&\mbox{reg}\,(I(D\setminus P))+\mbox{reg}\,((y_1^{w_{y_1}}))-1\\
&=&\mbox{reg}\,(I(D\setminus P))+w_{y_1}-1\\
&=&\!\!\!\!\sum\limits_{x\in V(D\setminus P)}w(x)-|V(D\setminus P)|+2+w_{y_1}-1\\
&=&\!\!\!\!\sum\limits_{x\in V(D)}w(x)-(|P|-1)-(|V(D)|-|P|)+2-1\\
&=&\!\!\!\!\sum\limits_{x\in V(D)}w(x)-|V(D)|+2. \hspace{4.8cm} (2)
\end{eqnarray*}
Consider the  exact sequence
$$0\longrightarrow \frac{S}{(I(D):y_1^{w_{y_1}})}(-w_{y_1})\stackrel{ \cdot y_1^{w_{y_1}}} \longrightarrow \frac{S}{I(D)}\longrightarrow \frac{S}{(I(D),y_1^{w_{y_1}})}\longrightarrow 0,$$
By  Lemma \ref{lem1} (2), Lemma \ref{lem5} (1)  and formulas (1) and (2),  we obtain
\[
\mbox{reg}\,(I(D))=\mbox{reg}\,((I(D),y_1^{w_{y_1}}))=\sum\limits_{x\in V(D)}w(x)-|V(D)|+2.
\]
\end{proof}

\medskip
From  Theorem \ref{thm1} and Lemma \ref{lem2} (2), we have
\begin{Corollary}\label{cor3}
Let $D=(V(D),E(D),w)$ be a weighted oriented bipartite graph as  Corollary \ref{cor2}, then
$$\mbox{reg}\,(I(D))=\sum\limits_{x\in V(D)}w(x)-|V(D)|+s+1.$$
\end{Corollary}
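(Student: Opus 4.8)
The plan is to reduce the statement to the single-component case established in Theorem \ref{thm2}, by exploiting that $I(D)$ splits as a sum of edge ideals supported on pairwise disjoint sets of variables. Since $D=\coprod_{i=1}^{s}D_{i}$ and the components $D_i$ share no vertices (Definition \ref{def1}), we have $I(D)=I(D_1)+\cdots+I(D_s)$, where each $I(D_i)$ is an ideal in the polynomial subring on the variables indexed by $V(D_i)$. In particular $\mbox{supp}\,(I(D_i))\cap \mbox{supp}\,(I(D_j))=\emptyset$ for $i\neq j$, which is exactly the hypothesis under which Lemma \ref{lem2} (2) applies.

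First I would establish, by induction on $s$, the splitting formula
\[
\mbox{reg}\,(I(D))=\sum_{i=1}^{s}\mbox{reg}\,(I(D_i))-(s-1).
\]
The base case $s=1$ is vacuous. For the inductive step, write $I(D)=\big(I(D_1)+\cdots+I(D_{s-1})\big)+I(D_s)$; the first summand is supported on $\bigcup_{i<s}V(D_i)$, which is disjoint from $V(D_s)$, so Lemma \ref{lem2} (2) gives $\mbox{reg}\,(I(D))=\mbox{reg}\,(I(D_1)+\cdots+I(D_{s-1}))+\mbox{reg}\,(I(D_s))-1$, and the claim follows from the induction hypothesis applied to the first $s-1$ components.

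Next I would substitute the single-component formula of Theorem \ref{thm2}, namely $\mbox{reg}\,(I(D_i))=\sum_{x\in V(D_i)}w(x)-|V(D_i)|+2$, into the splitting formula and collect terms. Because the sets $V(D_i)$ partition $V(D)$, we have $\sum_{i=1}^{s}\sum_{x\in V(D_i)}w(x)=\sum_{x\in V(D)}w(x)$ and $\sum_{i=1}^{s}|V(D_i)|=|V(D)|$, so that
\[
\mbox{reg}\,(I(D))=\sum_{x\in V(D)}w(x)-|V(D)|+2s-(s-1)=\sum_{x\in V(D)}w(x)-|V(D)|+s+1,
\]
which is the desired identity.

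There is essentially no obstacle in this argument: the only point requiring care is the verification of the support-disjointness hypothesis of Lemma \ref{lem2} (2), which is immediate from the meaning of a disjoint union in Definition \ref{def1}, and the bookkeeping that the vertex sets and weights aggregate correctly over the components. The substantive content lies entirely in Theorem \ref{thm2}; the corollary is purely a matter of assembling the components.
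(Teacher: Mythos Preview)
Your proof is correct and follows essentially the same approach as the paper: the paper simply cites the single-component regularity formula together with Lemma \ref{lem2} (2), and your argument spells out the straightforward induction on $s$ that makes this precise. (Note that the paper's one-line justification contains a typo, citing Theorem \ref{thm1} rather than Theorem \ref{thm2}; you correctly invoke Theorem \ref{thm2}.)
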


\vspace{5mm}
\section{Regularity  and projective dimension of  powers of edge ideal of the disjoint union of some weighted oriented gap-free  bipartite graphs}
\vspace{5mm} In this section, we consider   regularity  and projective dimension  of  powers of edge ideal of  the disjoint union   of some   weighted  oriented gap-free  bipartite graphs. We will provide  some exact formulas for  regularity and give  upper and lower bounds for  projective dimension of these ideals. Meanwhile, we will give some examples to show that these formulas are related to direction selection.
We need the following lemma.

\begin{Lemma}
\label{lem10} Let $D=(V(D),E(D),w)$ be a weighted oriented gap-free bipartite graph with  bipartitions $X=\{x_1,\ldots,x_{\ell}\}$, $Y=\{y_{1},\ldots,y_{m}\}$.
 Then for any $y\in N_D^{+}(x_1)$,
$$(I(D)^{t}:x_1y^{w_y})=I(D)^{t-1} \text{ \ for  all}\ t\geq 2.$$
\end{Lemma}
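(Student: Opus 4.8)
The plan is to prove the two inclusions separately. The inclusion $I(D)^{t-1}\subseteq (I(D)^{t}:x_1y^{w_y})$ is immediate: since $y\in N_D^{+}(x_1)$, the monomial $x_1y^{w_y}$ is a minimal generator of $I(D)$, so $x_1y^{w_y}\cdot I(D)^{t-1}\subseteq I(D)\cdot I(D)^{t-1}=I(D)^{t}$. All of the substance is in the reverse inclusion, and because every ideal in sight is monomial it suffices to take a monomial $u$ with $x_1y^{w_y}u\in I(D)^{t}$ and exhibit a product of $t-1$ generators of $I(D)$ dividing $u$.

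First I would fix a product $g_1\cdots g_t$ of minimal generators $g_i=x_{a_i}y_{b_i}^{w_{y_{b_i}}}$ of $I(D)$ with $g_1\cdots g_t\mid x_1y^{w_y}u=:F$; such a product exists since $F\in I(D)^t$. The goal is to re-choose the $g_i$ so that one of the factors is exactly $x_1y^{w_y}$: once this is arranged, deleting that factor leaves a product of $t-1$ generators dividing $F/(x_1y^{w_y})=u$, giving $u\in I(D)^{t-1}$. The engine for the re-choosing is the universality supplied by Lemma \ref{lem6}(2): since $d(x_1)$ is minimal, every $y_b\in N_D^{+}(x_1)$ has $N_D(y_b)=X$, so $x_ay_b^{w_{y_b}}\in\mathcal G(I(D))$ for \emph{every} $x_a\in X$.

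Then I would run a short case analysis according to whether $x_1$ and $y$ actually occur among $g_1,\dots,g_t$. If some $g_i=x_1y^{w_y}$, we are done. If some factor carries $x_1$, say $g_i=x_1y_c^{w_{y_c}}$ with $y_c\neq y$, and some other factor carries $y$, say $g_j=x_ay^{w_y}$, then I perform the exponent-preserving swap
\[
(x_1y_c^{w_{y_c}},\,x_ay^{w_y})\longmapsto(x_1y^{w_y},\,x_ay_c^{w_{y_c}});
\]
both new factors are genuine generators by the universality above (here $y_c\in N_D^{+}(x_1)$ forces $x_ay_c\in E(D)$), and since $x_1y_c^{w_{y_c}}\cdot x_ay^{w_y}=x_1y^{w_y}\cdot x_ay_c^{w_{y_c}}$ the product $g_1\cdots g_t$, hence its divisibility of $F$, is unchanged, while now $x_1y^{w_y}$ is one of the factors. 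The remaining degenerate configurations are handled by bare exponent comparison: if no factor carries $x_1$, then deleting any factor $g_j=x_ay^{w_y}$ leaves a product dividing $F/g_j=x_1u/x_a$, which (carrying no $x_1$) divides $u$; symmetrically, if no factor carries $y$ but some $g_i=x_1y_c^{w_{y_c}}$ carries $x_1$, then deleting $g_i$ leaves a product dividing $y^{w_y}u/y_c^{w_{y_c}}$ which (carrying no $y$) divides $u$; and if neither $x_1$ nor $y$ is used, then $g_1\cdots g_t\mid u$ already, so $u\in I(D)^{t}\subseteq I(D)^{t-1}$.

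The main obstacle, and the only place where the hypothesis $y\in N_D^{+}(x_1)$ is genuinely needed, is the mixed configuration in which $x_1$ and $y$ are carried by two \emph{different} factors; the swap resolves it, and its validity rests squarely on Lemma \ref{lem6}(2), which guarantees that every neighbour of the minimal-degree vertex $x_1$ is adjacent to all of $X$. I expect the exponent bookkeeping in the degenerate cases to be routine once the swap is in hand.
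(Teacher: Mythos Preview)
Your proof is correct and follows essentially the same strategy as the paper's: both fix a factorization $g_1\cdots g_t\mid x_1y^{w_y}u$, run a case analysis on whether $x_1$ and $y$ appear among the factors, and in the mixed case perform the swap $(x_1y_c^{w_{y_c}},x_ay^{w_y})\mapsto(x_1y^{w_y},x_ay_c^{w_{y_c}})$, justified by the nested-neighbourhood property of Lemma~\ref{lem6}. Your case breakdown is a bit more explicit than the paper's (you separate out the ``neither $x_1$ nor $y$ used'' sub-case and spell out the exponent bookkeeping), and you cite part~(2) of Lemma~\ref{lem6} where the paper cites part~(1), but the content is identical.
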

\begin{proof}
For  any monomial $f\in \mathcal{G}(I(D)^{t}:x_1y^{w_y})$, we have $fx_1y^{w_y}\in I(D)^{t}$. Write $fx_1y^{w_y}$ as
$fx_1y^{w_y}=e_{i1}e_{i2}\ldots e_{it}h$, where $h$ is a  monomial and  $e_{ij}=x_{ij}y_{k}^{w_{y_{k}}}$ such that $x_{ij}y_{k}\in E(D)$. We consider the following two cases:

(i) If $x_1y^{w_y}$  is not a factor of  $e_{i1}e_{i2}\ldots e_{it}$, then $x_1$ or $y^{w_y}$ divides $h$. We may assume $x_1$ divides $h$,  there exist at most one $j\in [t]$  such that $y^{w_y}$ divides
$e_{ij}$. If $y^{w_y}\mid e_{ij}$, then $f\in I(D)^{t-1}$. Otherwise,  $f\in I(D)^{t}$.

(ii) If $x_1y^{w_y}$ is  a factor of  $e_{i1}e_{i2}\ldots e_{it}$ and $x_1y^{w_y}\neq e_{ij}$ for any $ j\in [t]$, then there exist $ k,r\in [t]$ such that $e_{ik}\neq e_{ir}$  and $x_1$ (resp. $y^{w_y}$) is  a factor of some $e_{ik}$ (resp. $e_{ir}$).
Let's say  $e_{ik}=x_1y_p^{w_{y_p}}$, $e_{ir}=x_qy^{w_{y}}$ for   $x_q\in X$, $y_p\in Y$.
 By lemma \ref{lem6} (1),  $N_D^{-}(y_p)=X$, thus $x_qy_p\in E(D)$. Hence
  $f\in I(D)^{t-1}$.
The proof is completed.
\end{proof}

\begin{Corollary}
\label{cor4}
 Let $D=(V,E,w)$ be a weighted oriented bipartite graph as  Corollary \ref{cor2}.
 Then for any $i\in [s]$, $y\in N_D^{+}(x_{i_1})$,
 $$(I(D)^{t}:x_{i_1}y^{w_y})=I(D)^{t-1}\ \text{for all}\ \ t\geq 2.$$
\end{Corollary}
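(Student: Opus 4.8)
The plan is to reduce the statement for the disjoint union to the single-component case already settled in Lemma \ref{lem10}. After relabelling the components we may assume $i=1$. Set $A=I(D_1)$ and $B=\sum_{j=2}^{s}I(D_j)$, so that $I(D)=A+B$, where $A$ and $B$ are monomial ideals in disjoint sets of variables (the variables of $D_1$, respectively those of $D_2,\ldots,D_s$). Since $D$ is a disjoint union we have $N_D^{+}(x_{1_1})=N_{D_1}^{+}(x_{1_1})\subseteq Y_1$, so $y\in Y_1$ and the monomial $f:=x_{1_1}y^{w_y}$ involves only variables of $D_1$; in particular $f$ is a minimal generator of $A$. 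The inclusion $I(D)^{t-1}\subseteq (I(D)^{t}:f)$ is then immediate, since $f\in A\subseteq I(D)$ gives $fI(D)^{t-1}\subseteq I(D)^{t}$.

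For the reverse inclusion I would expand $I(D)^{t}=(A+B)^{t}=\sum_{k=0}^{t}A^{k}B^{t-k}$. Because colon by a monomial distributes over sums of monomial ideals, and because $f$ is supported on variables disjoint from those of $B$, one obtains
$$(I(D)^{t}:f)=\sum_{k=0}^{t}\bigl(A^{k}B^{t-k}:f\bigr)=\sum_{k=0}^{t}\bigl(A^{k}:f\bigr)B^{t-k},$$
the last equality being the elementary fact that for monomial ideals $A,B$ in disjoint variables and a monomial $f$ in the variables of $A$ one has $(A^{k}B^{t-k}:f)=(A^{k}:f)B^{t-k}$ (both sides are monomial ideals, so it suffices to compare their monomials).

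It then remains to bound each summand against $I(D)^{t-1}$. When $k=0$ the factor $A^{0}:f=S$, so the summand is $B^{t}\subseteq I(D)^{t}\subseteq I(D)^{t-1}$. When $k=1$, since $f\in A$ we again have $A:f=S$, so the summand is $B^{t-1}\subseteq I(D)^{t-1}$. When $k\geq 2$, Lemma \ref{lem10} applied to the single gap-free bipartite graph $D_1$ (with $t$ replaced by $k$, and with $x_{1_1}$ playing the role of $x_1$) gives $(A^{k}:f)=A^{k-1}$, whence the summand equals $A^{k-1}B^{t-k}\subseteq I(D)^{(k-1)+(t-k)}=I(D)^{t-1}$. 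Thus every summand lies in $I(D)^{t-1}$, which yields $(I(D)^{t}:f)\subseteq I(D)^{t-1}$ and, together with the first inclusion, completes the proof.

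The only points requiring care are the two boundary values $k=0,1$, where Lemma \ref{lem10} does not apply but where the colon collapses to the unit ideal $S$ (because $f$ is a generator of $A$), and the disjoint-support colon identity used to strip the $B$-factor. Neither is a genuine obstacle, so I expect the argument to be essentially routine once Lemma \ref{lem10} is in hand; the entire content of the corollary is the observation that the out-neighbour $y$ of $x_{i_1}$ necessarily lies in the \emph{same} component $Y_i$, which is what localizes the computation to $D_i$.
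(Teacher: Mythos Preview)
Your argument is correct and is essentially the paper's own proof: both expand $(A+B)^t$ binomially, use that the colon by the monomial $f=x_{1_1}y^{w_y}$ distributes over the sum and strips off the $B$-factor (disjoint variables), and then invoke Lemma~\ref{lem10} term-by-term to reduce each $A^{k}$-colon. The only cosmetic difference is that you spell out the boundary cases $k=0,1$ explicitly, whereas the paper handles them in one line by writing the resulting sum as $I(D_1)^{t-1}+I(D_1)^{t-2}J+\cdots+J^{t-1}+J^{t}=I(D)^{t-1}$.
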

\begin{proof}
Case  $s=1$ follows  from lemma \ref{lem10}. Suppose $s\geq 2$.  For convenience, let $i=1$, then $I(D)=I(D_1)+J$, where
$J=I(D_2)+I(D_3)+\cdots+I(D_s)$. It follows that $I(D)^t=(I(D_1)+J)^t=I(D_1)^t+I(D_1)^{t-1}J+\cdots+I(D_1)J^{t-1}+J^t$. By lemma \ref{lem10}, we obtain
\begin{eqnarray*}
(I(D)^{t}:x_{1_1}y^{w_y})&=&((I(D_1)^t+I(D_1)^{t-1}J+\cdots+I(D_1)J^{t-1}+J^t):x_{1_1}y^{w_y})\\
&=&I(D_1)^{t-1}+I(D_1)^{t-2}J+\cdots+I(D_1)J^{t-2}+J^{t-1}+J^{t}\\
&=&I(D)^{t-1}.
\end{eqnarray*}
\end{proof}

\medskip
The following lemma can be shown by similar arguments as in \cite[Theorem 4.2, Theorem 4.4]{Z6}, we omit its proof.
\begin{Lemma}\label{lem11}
Let $D=(V,E,w)$ be a weighted oriented bipartite graph as  Corollary \ref{cor2}. If  each $D_i$ is a star graph, then
 for any $t\geq 1$,
\begin{itemize}
\item[(1)] $\mbox{reg}\,(I(D)^t)=\sum\limits_{x\in V}w(x)-|V|+s+1+(t-1)(w+1)$,\\
 where $w=\mbox{max}\,\{w(x)\mid x\in V\}$,
\item[(2)]$\mbox{pd}\,(I(D)^t)=|V|-s-1$.
\end{itemize}

\end{Lemma}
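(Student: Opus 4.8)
The plan is to decouple the two features of the statement: the internal structure of a single weighted oriented star, which governs how $\mbox{reg}$ and $\mbox{pd}$ grow with the power $t$, and the disjoint union, which governs how the $s$ pieces combine. I would first settle the single-star case $s=1$. If $D_1$ is a star then either $|X_1|=1$ or $|Y_1|=1$, and in both cases the hypothesis that every edge is oriented away from $X_1$ forces $I(D_1)$ to factor as a monomial times an ideal with disjoint support. Concretely, if $X_1=\{x\}$ then $x$ is a source, so $w(x)=1$ and $I(D_1)=x\cdot L$ with $L=(y_1^{w_{y_1}},\dots,y_m^{w_{y_m}})$; if $Y_1=\{y\}$ then every $x_j$ is a source and $I(D_1)=y^{w_y}\cdot\mm_X$ with $\mm_X=(x_1,\dots,x_\ell)$. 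Writing $g$ for the monomial factor, $\mbox{supp}(g)$ is disjoint from the support of the second factor, so raising to the $t$-th power gives $I(D_1)^t=(g^t)(L^t)$ and Lemma \ref{lem3} yields
\[
\mbox{reg}\,(I(D_1)^t)=t\,\deg(g)+\mbox{reg}\,(L^t),\qquad \mbox{pd}\,(I(D_1)^t)=\mbox{pd}\,(L^t).
\]
It then remains to feed in two standard computations. For the monomial complete intersection $L=(y_1^{w_{y_1}},\dots,y_m^{w_{y_m}})$ one has $\mbox{pd}\,(L^t)=m-1$ (since $L^t$ is $\mm$-primary in $k[y_1,\dots,y_m]$, that quotient has depth $0$) and $\mbox{reg}\,(L^t)=\sum_j w_{y_j}-m+1+(t-1)\max_j w_{y_j}$; for $\mm_X$ one has $\mbox{pd}\,(\mm_X^t)=\ell-1$ and $\mbox{reg}\,(\mm_X^t)=t$ because powers of an ideal of variables have a linear resolution. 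Substituting, and using $w(x)=1$ (resp. $w(x_j)=1$), a direct check shows both expressions agree with the claimed formulas for $s=1$: the projective dimension is constant in $t$ and the regularity is linear in $t$ with slope equal to (the component's maximum weight) $+1$.

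Next I would induct on $s$. Writing $D=D_1\amalg D'$ with $D'=\coprod_{i=2}^{s}D_i$, the ideals $I=I(D_1)$ and $J=I(D')$ lie in disjoint sets of variables, so $I(D)^t=(I+J)^t$ and I can invoke the formulas for regularity and projective dimension of powers of a sum of ideals supported on disjoint variables,
\[
\mbox{reg}\,((I+J)^t)=\max_{a+b=t+1}\{\mbox{reg}\,(I^a)+\mbox{reg}\,(J^b)\}-1,\qquad \mbox{pd}\,((I+J)^t)=\max_{a+b=t+1}\{\mbox{pd}\,(I^a)+\mbox{pd}\,(J^b)\}+1,
\]
the maxima being over $a,b\ge 1$. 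Plugging in the inductive formulas, the projective dimensions $\mbox{pd}\,(I^a)=|V_1|-2$ and $\mbox{pd}\,(J^b)=|V'|-s$ are both independent of the power, so their sum is the constant $|V|-s-2$ and the right-hand side collapses to $|V|-s-1$, proving (2). For (1), the constant parts of $\mbox{reg}\,(I^a)$ and $\mbox{reg}\,(J^b)$ add (after the $-1$) to $\sum_{x\in V}w(x)-|V|+s+1=\mbox{reg}\,(I(D))$, exactly as in Corollary \ref{cor3}, while the $t$-dependent parts contribute $(a-1)(w_1+1)+(b-1)(w'+1)$ with $a+b=t+1$, where $w_1,w'$ are the maximal weights on $D_1,D'$. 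Maximizing this linear expression over the segment $(a-1)+(b-1)=t-1$ puts all the mass on the larger slope, giving $(t-1)(\max(w_1,w')+1)=(t-1)(w+1)$ since $w=\max(w_1,w')$ is the global maximum weight; this yields (1).

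The main obstacle is really the single-star base case, and inside it the regularity of powers of the complete intersection $L$, which I expect to require either a citation to the resolution of powers of a complete intersection or a short separate induction on $t$. A convenient alternative that stays entirely inside the toolkit of this paper is to induct on $t$ directly: by Corollary \ref{cor4} one has $(I(D)^t:x_{i_1}y^{w_y})=I(D)^{t-1}$ for a generator $x_{i_1}y^{w_y}$, so the short exact sequence
\[
0\longrightarrow \frac{S}{I(D)^{t-1}}(-1-w_y)\stackrel{\cdot x_{i_1}y^{w_y}}{\longrightarrow}\frac{S}{I(D)^t}\longrightarrow \frac{S}{(I(D)^t,x_{i_1}y^{w_y})}\longrightarrow 0,
\]
together with Lemma \ref{lem4} and Lemma \ref{lem5}, reduces $\mbox{reg}$ and $\mbox{pd}$ of $I(D)^t$ to those of $I(D)^{t-1}$ and of the added-generator quotient. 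The delicate point along this second route is to recognize that quotient as a power of a strictly smaller disjoint union of stars, so that the induction on $t$ closes; this is precisely the bookkeeping performed in \cite[Theorem 4.2, Theorem 4.4]{Z6}, which is why one may ultimately phrase the argument as ``by the same reasoning''.
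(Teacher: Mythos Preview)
The paper does not give a proof; it simply defers to \cite[Theorems 4.2 and 4.4]{Z6}, whose method is exactly the induction on $t$ via colon-ideal short exact sequences that you sketch as your ``alternative'' at the end. Your primary route is genuinely different: you factor each star ideal as a principal monomial times either a monomial complete intersection or a power of a linear maximal ideal in disjoint variables (so Lemma~\ref{lem3} applies to every power), read off the invariants of the second factor directly, and then assemble the $s$ pieces via the H\`a--Trung--Trung formulas for powers of sums in disjoint sets of variables.

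One point deserves care. The regularity identity you quote,
\[
\mbox{reg}\,((I+J)^t)=\max_{a+b=t+1}\{\mbox{reg}\,(I^a)+\mbox{reg}\,(J^b)\}-1,
\]
is a streamlined form of what \cite{HTT} actually proves; the exact formula there carries additional terms of the shape $\mbox{reg}\,(I^{a}/I^{a+1})+\mbox{reg}\,(J^b)$. In your situation those extra terms do not alter the maximum, because each $\mbox{reg}\,(I^a)$ and $\mbox{reg}\,(J^b)$ is already linear in the exponent from the first power on (so $\mbox{reg}\,(I^a/I^{a+1})\le\mbox{reg}\,(I^{a+1})-1$ contributes nothing new), but this should be stated rather than absorbed into a black-box citation. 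The projective-dimension side is unproblematic since both $\mbox{pd}\,(I^a)$ and $\mbox{pd}\,(J^b)$ are constant in the exponent, which collapses the HTT minimum immediately. Overall your approach trades the elementary exact-sequence bookkeeping of \cite{Z6} for heavier external inputs (the power-of-sums theorems from \cite{HTT} and the known formula for regularity of powers of a monomial complete intersection), but in return it makes the structure of the answer --- constant $\mbox{pd}$, linear $\mbox{reg}$ with slope $w+1$ determined by the single component of largest weight --- completely transparent.
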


\medskip
Now, we are ready to prove  the first major result of this section.
\begin{Theorem}\label{thm3}
Let $s$ be a positive integer and  $D=\coprod\limits_{i=1}^{s}D_{i}$ be the disjoint union of $s$ weighted oriented gap-free bipartite graphs $D_i$ with bipartition $X_i,Y_i$ for $1\leq i\leq s$.
 If the orientation of $D$ is as  Definition \ref{def2}, then
\begin{itemize}
\item[(1)] $\mbox{reg}\,(I(D)^t)\leq\sum\limits_{x\in V(D)}w(x)-|V(D)|+s+1+(t-1)(w+1)$,
\item[(2)]$\mbox{pd}\,(I(D)^t)\leq|V(D)|-s-1$,
\end{itemize}
where  $V(D)$ is the vertex set of $D$ and $w=\mbox{max}\,\{w(x)\mid x\in V(D)\}$.

Furthermore, $\mbox{pd}\,(I(D)^t)$ attains this upper bound if each $D_{i}$ is a  complete bipartite graph.
\end{Theorem}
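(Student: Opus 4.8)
The plan is to establish both inequalities simultaneously by induction on the exponent $t$, and to deal with the final assertion about complete bipartite components afterwards, once the upper bound for $\mbox{pd}$ is available.

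The base case $t=1$ is immediate: part (1) is Corollary \ref{cor3}, which gives $\mbox{reg}\,(I(D))=\sum_{x\in V(D)}w(x)-|V(D)|+s+1$ (so the stated inequality holds with equality at $t=1$), and part (2) is the upper bound of Corollary \ref{cor2}. For the inductive step I fix $t\geq 2$, assume both statements for $t-1$, choose some $y\in N_D^{+}(x_{1_1})$, and set $f=x_{1_1}y^{w_y}$, a minimal generator of $I(D)$ of degree $1+w_y$. The driving device is the short exact sequence
$$0\longrightarrow \frac{S}{(I(D)^{t}:f)}(-\deg f)\stackrel{\cdot f}{\longrightarrow}\frac{S}{I(D)^{t}}\longrightarrow \frac{S}{(I(D)^{t},f)}\longrightarrow 0,$$
where Corollary \ref{cor4} identifies $(I(D)^{t}:f)=I(D)^{t-1}$. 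Feeding this into Lemma \ref{lem5}(1) and Lemma \ref{lem1}(2) (for regularity) and Lemma \ref{lem5}(2) (for projective dimension), the left-hand term is controlled by the induction hypothesis at $t-1$: for regularity $\mbox{reg}\,(S/I(D)^{t-1})+\deg f$ does not exceed $\sum_{x\in V(D)}w(x)-|V(D)|+s+(t-1)(w+1)$ because $\deg f=1+w_y\leq w+1$, and for projective dimension $\mbox{pd}\,(S/I(D)^{t-1})=\mbox{pd}\,(I(D)^{t-1})+1\leq |V(D)|-s$ by Lemma \ref{lem1}(1) and the inductive bound. Everything therefore reduces to bounding the right-hand module $S/(I(D)^{t},f)$ by the same two quantities.

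The analysis of $(I(D)^{t},f)$ is the main obstacle, and I expect it to require a nested induction on $|E(D)|$ in the spirit of the proof of Theorem \ref{thm1}. The idea is that, after adjoining $f$, the generator can be absorbed by passing to a gap-free bipartite graph with strictly fewer edges: deleting a suitably chosen maximal edge $e=x_qy_{k_q}$ yields, by Lemma \ref{lem7}, a gap-free bipartite graph $D'=D\setminus e$ (after discarding any isolated vertex) carrying the same orientation convention, for which the bounds already hold by the inner induction hypothesis. Using the neighbourhood inclusions recorded in Remark \ref{rem1} and the orientation hypothesis that all edges point away from each $X_i$, one identifies the colon ideals that arise when comparing $I(D)^{t}$ with $I(D')^{t}$ and expresses $(I(D)^{t},f)$ through powers of edge ideals of induced gap-free bipartite subgraphs of $D$. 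The delicate part is the bookkeeping of degree shifts and the verification, through repeated use of Lemma \ref{lem5}, that none of the maxima overshoots $\sum_{x\in V(D)}w(x)-|V(D)|+s+(t-1)(w+1)$ for regularity, respectively $|V(D)|-s$ for projective dimension; the weight bound $w_y\leq w$ is exactly what keeps the regularity increment at most $w+1$ per power.

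Finally, to show that $\mbox{pd}\,(I(D)^{t})$ attains the bound $|V(D)|-s-1$ when every $D_i$ is complete bipartite, I would produce a matching lower bound. For a complete bipartite $D_i$ one has $I(D_i)=(x_{i_1},\ldots,x_{i_{\ell_i}})(y_{i_1}^{w_{i_1}},\ldots,y_{i_{m_i}}^{w_{i_{m_i}}})$, a product of two ideals of disjoint support whose first factor is a power of a linear ideal and whose second factor is a complete intersection of pure powers; since the $t$-th powers of these have projective dimensions $\ell_i-1$ and $m_i-1$, Lemma \ref{lem3}(1) gives $\mbox{pd}\,(I(D_i)^{t})=|V(D_i)|-2$. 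To pass to the disjoint union I would, using Lemma \ref{lem4}(1) in the form $\mbox{pd}\,(I(D)^{t})\geq\mbox{pd}\,((I(D)^{t}:g))$, choose a monomial $g$ whose colon splits $I(D)^{t}$ into a sum of ideals supported on the disjoint variable sets of the components, and then apply Lemma \ref{lem2}(1) $s-1$ times to add the $s$ component contributions together with the $s-1$ gluing terms, obtaining $\sum_{i=1}^{s}(|V(D_i)|-2)+(s-1)=|V(D)|-s-1$. The star-graph computation of Lemma \ref{lem11} provides both a consistency check and the seed for this count.
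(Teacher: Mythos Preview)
Your left--hand term is fine: with $f=x_{1_1}y^{w_y}$ Corollary \ref{cor4} gives $(I(D)^t:f)=I(D)^{t-1}$ and the induction on $t$ closes. The genuine gap is the right--hand term. Your proposed treatment of $(I(D)^t,f)$ by ``nested induction on $|E(D)|$'' in the spirit of Theorem \ref{thm1} does not carry over: in Theorem \ref{thm1} the identity $I(D)=(I(D\setminus e),f)$ is what makes the exact sequence $(*)$ work, but for $t\ge 2$ one has $I(D)^t\neq (I(D\setminus e)^t,f)$, and there is no visible mechanism relating $(I(D)^t,f)$ to $I(D')^t$ for a smaller gap--free bipartite $D'$. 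Your description (``identifies the colon ideals that arise when comparing $I(D)^t$ with $I(D')^t$'') is not an argument; it is precisely the missing step.

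The paper avoids this obstacle by choosing a different pivot. Instead of the edge monomial $f$, it multiplies by the pure power $y_1^{w_{y_1}}$. This makes the ``$+$'' side trivial: since every generator of $I(D)^t$ involving $y_1$ is a multiple of $y_1^{w_{y_1}}$, one gets $(I(D)^t,y_1^{w_{y_1}})=(I(D\setminus P)^t,y_1^{w_{y_1}})$ with $P=\{y_1\}\cup\{x_p:d(x_p)=1\}$, and then induction on $|V(D)|$ applies directly via Lemma \ref{lem2}. The price is that the ``$:$'' side $K_0=(I(D)^t:y_1^{w_{y_1}})$ is no longer a single power of $I(D)$; the paper handles it by a further chain of exact sequences, coloning successively by $x_1,\ldots,x_\ell$, so that at each step $(K_p:x_{p+1})=(I(D\setminus\{x_1,\ldots,x_p\})^{t-1},x_1,\ldots,x_p)$ by Corollary \ref{cor4}, and the terminal ideal $K_\ell$ contains $(x_1,\ldots,x_\ell)$ and reduces to $I(\coprod_{i\ge 2}D_i)^t$. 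Thus the paper runs a simultaneous induction on $t$ and $|V(D)|$, not the $t$--then--$|E(D)|$ scheme you suggest.

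Your plan for the ``furthermore'' clause is also incomplete. Even granting $\mbox{pd}\,(I(D_i)^t)=|V(D_i)|-2$ for each complete bipartite $D_i$, you cannot invoke Lemma \ref{lem2}(1) because $I(D)^t\neq\sum_i I(D_i)^t$; the promised monomial $g$ that ``splits'' the colon is not produced. In the paper this clause is not proved separately: equality is tracked through the same induction, since when every $D_i$ is complete bipartite each intermediate bound (for $(I(D)^t,y_1^{w_{y_1}})$ and for $(K_p:x_{p+1})$) is already an equality by the inductive hypotheses.
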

\begin{proof} If every $D_i$ is a  star graph, then the results hold by Lemma \ref{lem11}. Otherwise, there exists some $D_i$ being not a star graph. Say $i=1$
and $D_1$  has bipartition  $X_1=\{x_1,\ldots,x_{\ell}\}$,  $Y_1=\{y_{1},\ldots,y_{m}\}$. Thus $\ell,m\geq 2$.
We apply induction on  $t$ and $|V(D)|$. Case $t=1$ follows from  Corollary \ref{cor2} and Corollary \ref{cor3}. Now assume that $t\geq 2$.
 For any $p\in [\ell]$, let $N_D^{+}(x_p)=\{y_1,\ldots,y_{k_p}\}$, then
we have the  short exact sequence
$$ 0  \longrightarrow \frac{S}{(I(D)^t:y_1^{w_{y_1}})}(-w_{y_1}) \stackrel{ \cdot y_1^{w_{y_1}}} \longrightarrow  \frac{S}{I(D)^t} \longrightarrow \frac{S}{(I(D)^t,y_1^{w_{y_1}})}  \longrightarrow  0. \eqno(\ddag)$$
Let $P=\{y_1\}\cup\{x_p\in X_1 \mid d(x_p)=1\}$, then $(I(D)^t, y_1^{w_{y_1}})=(I(D\setminus P)^t, y_1^{w_{y_1}})$ and $x_{\ell}\notin P$. It follows that $D\setminus P$ has $s$ connected components. By Lemma \ref{lem2} and induction hypothesis on $|V(D)|$, we obtain
\begin{eqnarray*}
\!\!& &\!\!\mbox{reg}\,((I(D)^t\!,\!y_1^{w_{y_1}}))\!=\!\mbox{reg}\,((I(D\setminus P)^t\!, \!y_1^{w_{y_1}}))\!=\!\mbox{reg}\,((I(D\setminus P)^t)+\mbox{reg}\, ((y_1^{w_{y_1}}))-1\\
\!\!&=&\!\!\sum\limits_{x\in V(D\setminus P)}w(x)-|V(D\setminus P)|+s+1+(t-1)(w'+1)+w_{y_1}-1\\
\!\!&=&\!\!(\sum\limits_{x\in V(D)}w(x)-(|P|-1))-(|V(D)|-|P|)+s+1+(t-1)(w'+1)-1\\
\!\!&\leq&\!\!\sum\limits_{x\in V(D)}w(x)-|V(D)|+s+1+(t-1)(w+1),   \hspace{5.2cm} (1)\\
\!\!& &\!\!\mbox{pd}\,((I(D)^t, y_1^{w_{y_1}}))=\mbox{pd}\,((I(D\setminus P)^t, y_1^{w_{y_1}}))=\mbox{pd}\,((I(D\setminus P)^t)+1\\
&\leq&(|V(D)|-|P|)-s-1+1\leq |V(D)|-s-1,
\end{eqnarray*}
where $w'=\mbox{max}\,\{w(x)\mid x\in V(D\setminus P)\}$,  the first  inequality holds by the choice of $w$ and $\mbox{pd}\,((I(D)^t, y_1^{w_{y_1}}))$ attains this upper bound if   every  $D_{i}$  is a  complete bipartite graph.

Let  $K_0=(I(D)^t:y_1^{w_{y_1}})$ and  $K_p=((I(D)^t:y_1^{w_{y_1}}),x_1,\ldots,x_{p})$ for any $ p\in [\ell]$, then we get the following short exact sequences
\begin{gather*}
\hspace{2cm}\begin{matrix}
 0 & \longrightarrow & \frac{S}{(K_0:x_1)}(-1) & \stackrel{ {\cdot x_1}} \longrightarrow & \frac{S}{K_0} &\longrightarrow & \frac{S}{K_1} & \longrightarrow & 0  &\\
  0 & \longrightarrow & \frac{S}{(K_1:x_2)}(-1) & \stackrel{ {\cdot x_2}} \longrightarrow & \frac{S}{K_1} &\longrightarrow & \frac{S}{K_2} & \longrightarrow & 0  &\hspace{0.5cm} (\ddagger\ddagger)\\
 \\
      &  &\vdots&  &\vdots&  &\vdots&  & &\hspace{4cm} \\
 0 & \longrightarrow & \frac{S}{(K_{\ell-1}:x_{\ell})}(-1) & \stackrel{ {\cdot x_{\ell}}} \longrightarrow & \frac{S}{K_{\ell-1}} &\longrightarrow & \frac{S}{K_{\ell}} & \longrightarrow & 0
 \end{matrix}
\end{gather*}
Hence, in order to compute $\mbox{reg}\,(K_0)$ and $\mbox{pd}\,(K_0)$, we need to compute  $\mbox{reg}\,(K_{\ell})$, $\mbox{pd}\,(K_{\ell})$, $\mbox{reg}\,((K_p:x_{p+1})(-1))$ and $\mbox{pd}\,((K_p:x_{p+1})(-1))$ for any $0\leq p\leq \ell-1$.
 We distinguish into the following two steps:

Step 1. If $s=1$, then $K_{\ell}=((I(D)^t:y_1^{w_{y_1}}),x_1,\ldots,x_{\ell})=(x_1,\ldots,x_{\ell})$. In this case, we have
\[\mbox{reg}\,(K_{\ell})\!=\!\mbox{reg}\,((x_1,\ldots,x_{\ell}))=1\leq\!\!\!\!\sum\limits_{x\in V(D)}\!\!\!\!w(x)\!-\!|V(D)|\!+\!s\!+\!1\!+\!(t-1)(w+1)\!-\!w_{y_1}, \]
\[\mbox{pd}\,(K_{\ell})\!=\!\mbox{pd}\,((x_1,\ldots,x_{\ell}))=\ell-1\leq |V(D)|-2.
\]

If $s\geq2$, then $K_{\ell}=((I(D)^t:y_1^{w_{y_1}}),x_1,\ldots,x_{\ell})=(I(\coprod\limits_{i=2}^{s}D_{i})^t, x_1,\ldots,x_{\ell})$.
By  Lemma \ref{lem2} and induction hypothesis on $|V(D)|$, we obtain
\begin{eqnarray*}
\mbox{reg}\,(K_{\ell})&=&\mbox{reg}\,((I(\coprod\limits_{i=2}^{s}D_{i})^t, x_1,\ldots,x_{\ell}))
=\mbox{reg}\,(I(\coprod\limits_{i=2}^{s}D_{i})^t)+\mbox{reg}\,((x_1,\ldots,x_{\ell}))-1\\
\!\!&=&(\!\!\sum\limits_{x\in V(\coprod\limits_{i=2}^{s}D_{i})}w(x)-|V(\coprod\limits_{i=2}^{s}D_{i})|+s+(t-1)(w''+1))+1-1\\
\!\!&\leq&\!\!\sum\limits_{x\in V(D)}w(x)-|V(D)|+s+1+(t-1)(w+1)-w_{y_1},   \\
\mbox{pd}\,(K_{\ell})&=&\mbox{pd}\,(I(\coprod\limits_{i=2}^{s}D_{i})^t)+\sum\limits_{j=1}^{\ell}\mbox{pd}\,((x_j))+\ell\\
\!\!&\leq&\!\!|V(\coprod\limits_{i=2}^{s}D_{i})|-(s-1)-1+\ell\leq |V(D)|-s-1,
\end{eqnarray*}
where $w''=\mbox{max}\,\{w(x)\mid x\in V(\coprod\limits_{i=2}^{s}D_{i})\}$,  the first  inequality holds by the choice of $w$.

In brief, we have
\begin{eqnarray*}
\mbox{reg}\,(K_{\ell})&\leq&\sum\limits_{x\in V(D)}w(x)-|V(D)|+s+1+(t-1)(w+1)-w_{y_1},\\
\mbox{pd}\,((K_{\ell})&\leq& |V(D)|-s-1.\hspace{7.8cm} (2)
\end{eqnarray*}
Step 2. Let $V_0=\emptyset$ and $V_p=\{x_1,\ldots,x_p\}$ for any $ p \in [\ell]$, then  $D\setminus V_{p}$ has $s$ connected components for $0\leq p \leq \ell-1$. Thus, by Corollary \ref{cor4},  we have
$$(K_0:x_1)=(I(D\setminus V_0 )^t:x_1y_1^{w_{y_1}})=I(D\setminus V_0 )^{t-1}$$
and
$$(K_p:x_{p+1})=((I(D\setminus V_p )^t:x_{p+1}y_1^{w_{y_1}}),x_1,\ldots,x_{p})=(I(D\setminus V_p )^{t-1},x_1,\ldots,x_{p}).$$
It follows that $\mbox{reg}\,((K_p:x_{p+1}))=\mbox{reg}\,(I(D\setminus V_p )^{t-1})$, $\mbox{pd}\,((K_p:x_{p+1}))=\mbox{pd}\,(I(D\setminus V_p )^{t-1})+p$ by  Lemma \ref{lem2}.
Therefore, by  induction hypothesis on $t$ and $|V(D)|$, we obtain
\begin{eqnarray*}
\!\!& &\!\!\mbox{reg}\,((K_p:x_{p+1})(-1))\!=\!\mbox{reg}\,((K_p:x_{p+1}))+1=\mbox{reg}\,(I(D\setminus V_p )^{t-1})+1\\
\!\!&=&\!\!\sum\limits_{x\in V(D\setminus V_p)}w(x)-|V(D\setminus V_p)|+s+1+(t-2)(w+1)+1\\
\!\!&=&\!\!(\sum\limits_{x\in V(D)}w(x)-|V_p|)-(|V(D)|-|V_p|)+s+1+(t-1)(w+1)-w\\
\!\!&\leq&\!\!\sum\limits_{x\in V(D)}w(x)-|V(D)|+s+1+(t-1)(w+1)-w_{y_1}, \\
\!\!& &\!\!\mbox{pd}\,((K_p:x_{p+1}))\!=\mbox{pd}\,(I(D\setminus V_p )^{t-1})+p\\
&\leq&|V(D\setminus V_p)|-s-1+p=|V(D)|-s-1, \hspace{5.3cm} (3)
\end{eqnarray*}
where the first  inequality holds by the choice of $w$ and $\mbox{pd}\,((K_p:x_{p+1}))$ attains this upper bound if  every $D_{i}$ is a  complete bipartite graph.

Therefore,  by Lemma \ref{lem1}, (1) and (2) of  Lemma \ref{lem5}, formulas (2), (3) and the short exact sequences ($\ddagger\ddagger$), we obtain
\begin{eqnarray*}
\mbox{reg}\,(K_0)\!\!&=&\!\!\mbox{reg}\,(I(D)^t:y_1^{w_{y_1}})
\leq\!\!\!\sum\limits_{x\in V(D)}\!\!\!w(x)-|V(D)|+s+1+(t-1)(w+1)-w_{y_1},\\
\mbox{pd}\,(K_0)\!\!&=&\mbox{pd}\,(I(D)^t:y_1^{w_{y_1}})\!\!\leq|V(D)|-s-1, \hspace{5.0cm} (4)
\end{eqnarray*}
and $\mbox{pd}\,(K_0)$ attains this upper bound if  every $D_{i}$ is a  complete bipartite graph.

Finally, by Lemma \ref{lem1}, (1) and (2) of  Lemma \ref{lem5}, formulas (1), (4) and the short exact sequence ($\ddagger$), we obtain
\begin{eqnarray*}
\mbox{reg}\,(I(D)^t)&\leq&\sum\limits_{x\in V(D)}w(x)-|V(D)|+s+1+(t-1)(w+1),\\
\mbox{pd}\,(I(D)^t)&\leq & |V(D)|-s-1.
\end{eqnarray*}
Furthermore,  $\mbox{pd}\,(I(D)^t)$ attains this upper bound if  every $D_{i}$ is a  complete bipartite graph.
We complete the proof.
\end{proof}

\medskip
\begin{Corollary}\label{cor5}
Let $D=(V,E,w)$ be a weighted oriented bipartite graph as  Theorem \ref{thm3}. If $b_i=\mbox{max}\,\{|X_i|,|Y_i|\}$, then for any $t\geq 1$
\begin{itemize}
\item[(1)] $\sum\limits_{i=1}^{s}b_i-1\leq\mbox{pd}\,(I(D)^t)\leq|V|-s-1$,
\item[(2)] $s+1\leq\mbox{depth}\,(I(D)^t)\leq |V|-\sum\limits_{i=1}^{s}b_i+1$.
\end{itemize}
Furthermore, $\mbox{pd}\,(I(D)^t)$ and $\mbox{depth}\,(I(D)^t)$  attain upper and lower bounds respectively if each  $D_{i}$ is a complete bipartite graph and  the  lower and upper bounds of $\mbox{pd}\,(I(D)^t)$ and  $\mbox{depth}\,(I(D)^t)$ are the same respectively if  each  $D_{i}$ is a star graph.
\end{Corollary}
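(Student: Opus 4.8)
The plan is to reduce everything to a single inequality about projective dimension, exploiting the fact that parts (1) and (2) are equivalent. Since $S$ is a polynomial ring in $|V|$ variables and $I(D)^t$ is a nonzero finitely generated graded module, the Auslander--Buchsbaum formula gives $\mbox{depth}\,(I(D)^t)=|V|-\mbox{pd}\,(I(D)^t)$. Hence a lower (resp.\ upper) bound on $\mbox{pd}\,(I(D)^t)$ translates verbatim into an upper (resp.\ lower) bound on $\mbox{depth}\,(I(D)^t)$, and both attainment clauses transfer along this identity; so it is enough to prove part (1). The upper bound $\mbox{pd}\,(I(D)^t)\leq |V|-s-1$, together with its attainment when every $D_i$ is complete bipartite, is already Theorem \ref{thm3}(2). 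Thus the only genuinely new content is the lower bound $\mbox{pd}\,(I(D)^t)\geq \sum\limits_{i=1}^{s}b_i-1$.

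To establish this lower bound I would lower the exponent $t$ down to $1$ by an iterated colon and then quote the $t=1$ case. Fix the component $D_1$, the vertex $x_{1_1}\in X_1$, and a vertex $y\in N_D^{+}(x_{1_1})$ (which exists because $D$ has no isolated vertices). For $t\geq 2$, Corollary \ref{cor4} yields $(I(D)^t:x_{1_1}y^{w_y})=I(D)^{t-1}$, and since $(J:fg)=((J:f):g)$ for monomial ideals, applying this identity $t-1$ times with the same $x_{1_1}$ and $y$ gives $(I(D)^t:(x_{1_1}y^{w_y})^{t-1})=I(D)$. By Lemma \ref{lem4}(1), $\mbox{pd}\,(I(D)^t)\geq \mbox{pd}\,\big((I(D)^t:(x_{1_1}y^{w_y})^{t-1})\big)=\mbox{pd}\,(I(D))$, and Corollary \ref{cor2}(2) supplies $\mbox{pd}\,(I(D))\geq \sum\limits_{i=1}^{s}b_i-1$. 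This handles $t\geq 2$; the case $t=1$ is Corollary \ref{cor2}(2) directly. Substituting the two resulting $\mbox{pd}$ bounds into the Auslander--Buchsbaum identity then yields part (2).

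It remains to verify the degenerate cases. If every $D_i$ is complete bipartite, the upper bound of $\mbox{pd}$ is attained by Theorem \ref{thm3}, so $\mbox{depth}$ attains its lower bound $s+1$. If every $D_i$ is a star, then $|X_i|=1$ or $|Y_i|=1$, whence $|V(D_i)|=b_i+1$ and $|V|=\sum\limits_{i=1}^{s}(b_i+1)=\sum\limits_{i=1}^{s}b_i+s$; therefore $\sum\limits_{i=1}^{s}b_i-1=|V|-s-1$, the lower and upper bounds coincide, and $\mbox{pd}\,(I(D)^t)$ (and dually $\mbox{depth}\,(I(D)^t)$) is pinned to this common value, consistent with Lemma \ref{lem11}. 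The main obstacle is the lower bound, and inside it the one point requiring care is the iterated-colon identity: one must confirm that Corollary \ref{cor4} applies at every stage with the \emph{same} pair $x_{1_1},y$, so that each colon strictly decreases the exponent and the process terminates at $I(D)$ after exactly $t-1$ steps. Once that is checked, the rest is a formal assembly of Theorem \ref{thm3}, Corollary \ref{cor2}, Lemma \ref{lem4}(1), and Auslander--Buchsbaum.
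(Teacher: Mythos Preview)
Your proposal is correct and follows essentially the same route as the paper: both reduce (2) to (1) via Auslander--Buchsbaum, invoke Theorem~\ref{thm3}(2) for the upper bound, and obtain the lower bound by coloning $I(D)^t$ with $x_{i_1}y^{w_y}$ (using Corollary~\ref{cor4} and Lemma~\ref{lem4}(1)) to descend to the $t=1$ case handled by Corollary~\ref{cor2}(2). The only cosmetic difference is that the paper phrases the descent as a one-step induction on $t$, whereas you unroll it into a single iterated colon $(I(D)^t:(x_{1_1}y^{w_y})^{t-1})=I(D)$; these are equivalent.
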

\begin{proof} By Auslander-Buchsbaum formula and Theorem \ref{thm3}, we only need to prove $\mbox{pd}\,(I(D)^t)\geq \sum\limits_{i=1}^{s}b_i-1$ and
$\sum\limits_{i=1}^{s}b_i=|V|-s$ if  each  $D_{i}$ is a star graph.
Let $x_1$ be the smallest degree in $\bigcup\limits_{i=1}^{s}X_i$ and $y_1\in N_D^{+}(x_1)$. We apply induction on  $t$.
Case $t=1$ follows from Corollary \ref{cor2}.
Assume $t\geq 2$, by  Lemma \ref{lem4} (1),  Corollary \ref{cor4} and induction hypothesis on $t$, we obtain
\[
\mbox{pd}\,(I(D)^t)\geq\mbox{pd}\,((I(D)^t:x_1y_1^{w_{y_1}}))=\mbox{pd}\,(I(D)^{t-1})\geq\sum\limits_{i=1}^{s}b_i-1.
\]
If  each  $D_{i}$ is a star graph, then $\sum\limits_{i=1}^{s}b_i=|V|-s$.
\end{proof}

\begin{Corollary}\label{cor6}
 Let $D=(V, E, w)$  be a weighted oriented graph as Theorem \ref{thm3}. If there exists some $t_0$ such that $\mbox{pd}\,(I(D)^{t_0})=|V|-s-1$, then
\begin{itemize}
\item[(1)] $\mbox{pd}\,(I(D)^t)=|V|-s-1$ for any $t\geq t_0$,
\item[(2)] $\mbox{depth}\,(I(D)^t)=s+1$ for any $t\geq t_0$.
\end{itemize}
\end{Corollary}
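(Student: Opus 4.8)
The plan is to exploit the monotonicity of the function $t\mapsto \mbox{pd}\,(I(D)^t)$ together with the uniform upper bound already supplied by Theorem \ref{thm3}. First I would record the key monotonicity estimate. Let $x_{1_1}$ be a vertex of smallest degree in $\bigcup_{i=1}^{s}X_i$ and pick $y\in N_D^{+}(x_{1_1})$. For any $t\geq 2$, Corollary \ref{cor4} gives $(I(D)^t:x_{1_1}y^{w_y})=I(D)^{t-1}$, and Lemma \ref{lem4}(1) yields
$$\mbox{pd}\,(I(D)^t)\geq\mbox{pd}\,((I(D)^t:x_{1_1}y^{w_y}))=\mbox{pd}\,(I(D)^{t-1}).$$
Hence the sequence $\big(\mbox{pd}\,(I(D)^t)\big)_{t\geq 1}$ is non-decreasing; this is exactly the inequality already used in the proof of Corollary \ref{cor5}.

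Next I would combine this with the ceiling $\mbox{pd}\,(I(D)^t)\leq |V|-s-1$, which holds for all $t\geq 1$ by Theorem \ref{thm3}. Under the hypothesis $\mbox{pd}\,(I(D)^{t_0})=|V|-s-1$, for every $t\geq t_0$ the monotonicity and the upper bound sandwich the value:
$$|V|-s-1=\mbox{pd}\,(I(D)^{t_0})\leq\mbox{pd}\,(I(D)^t)\leq |V|-s-1,$$
forcing $\mbox{pd}\,(I(D)^t)=|V|-s-1$. This proves part (1).

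Finally, part (2) follows by converting projective dimension into depth. Applying the Auslander--Buchsbaum formula to the $S$-module $I(D)^t$ gives $\mbox{pd}\,(I(D)^t)+\mbox{depth}\,(I(D)^t)=\mbox{depth}\,(S)=|V|$, so part (1) yields $\mbox{depth}\,(I(D)^t)=|V|-(|V|-s-1)=s+1$ for all $t\geq t_0$.

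I do not expect a genuine obstacle here: the statement is a formal consequence of the upper bound of Theorem \ref{thm3} and the non-decreasing property obtained from Corollary \ref{cor4}. The only point requiring a little care is the bookkeeping in the Auslander--Buchsbaum step, in particular keeping track of the shift between $I(D)^t$ and $S/I(D)^t$ recorded in Lemma \ref{lem1}, so that the resulting constant is $s+1$ rather than $s$.
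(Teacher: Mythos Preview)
Your proposal is correct and follows essentially the same approach as the paper: the paper also uses Corollary~\ref{cor4} and Lemma~\ref{lem4}(1) to get $\mbox{pd}\,(I(D)^t)\geq\mbox{pd}\,(I(D)^{t-1})$, combines this (by induction on $t$) with the upper bound from Theorem~\ref{thm3}, and deduces part~(2) from the Auslander--Buchsbaum formula. Your explicit phrasing of the argument as a sandwich between a non-decreasing sequence and a uniform ceiling is slightly more polished, but the content is identical.
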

\begin{proof} By Auslander-Buchsbaum formula and Theorem \ref{thm3}, we only need to prove $\mbox{pd}\,(I(D)^t)\geq|V|-s-1$ for any $t\geq t_0$.
Let $x_1$ be the smallest degree in $\bigcup\limits_{i=1}^{s}X_i$ and $y_1\in N_D^{+}(x_1)$.
If $t>t_0$, then, by  Lemma \ref{lem4} (1),  Corollary \ref{cor4} and induction hypothesis on $t$, we obtain
\[
\mbox{pd}\,(I(D)^t)\geq\mbox{pd}\,((I(D)^t:x_1y_1^{w_{y_1}}))=\mbox{pd}\,(I(D)^{t-1})\geq |V|-s-1.
\]
\end{proof}

An immediate consequence of Corollary \ref{cor6} is the following corollary.
\begin{Corollary}\label{cor7}
 Let $s$ be a positive integer and $G_1,\ldots,G_s$  be $s$ disjoint gap-free bipartite graphs. Let  $G=\coprod\limits_{i=1}^{s}G_{i}$ be  their disjoint union.
If there exists some $t_0$ such that $\mbox{pd}\,(I(G)^{t_0})=|V|-s-1$, then
\begin{itemize}
\item[(1)] $\mbox{pd}\,(I(G)^t)=|V|-s-1$ for any $t\geq t_0$,
\item[(2)] $\mbox{depth}\,(I(G)^t)=s+1$ for any $t\geq t_0$.
\end{itemize}
\end{Corollary}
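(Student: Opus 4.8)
The plan is to recover this undirected statement as the special case of Corollary \ref{cor6} in which every vertex weight equals $1$. First I would turn the undirected graph $G=\coprod_{i=1}^{s}G_i$ into a weighted oriented graph $D=\coprod_{i=1}^{s}D_i$ as follows: for each $i$ fix a bipartition $X_i,Y_i$ of the gap-free bipartite graph $G_i$, orient every edge of $G_i$ away from $X_i$ so that the orientation of $D$ is exactly the one prescribed in Definition \ref{def2}, and assign the weight $w(x)=1$ to every vertex $x\in V$. This $D$ is then a weighted oriented graph of precisely the type treated in Theorem \ref{thm3}, hence of the type appearing in Corollary \ref{cor6}.

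Next I would observe that with this choice the edge ideal of the oriented graph coincides with the ordinary edge ideal of $G$. Indeed, each generator of $I(D)$ has the form $x_ix_j^{w_{x_j}}=x_ix_j$ since $w_{x_j}=1$, and these are exactly the generators of $I(G)$; this is the well-known reduction recorded in the introduction (when all weights are $1$, $I(D)$ recovers the usual edge ideal of its underlying graph). Hence $I(D)=I(G)$, and therefore $I(D)^t=I(G)^t$ for every $t\geq 1$. In particular the hypothesis $\mbox{pd}\,(I(G)^{t_0})=|V|-s-1$ is literally the hypothesis $\mbox{pd}\,(I(D)^{t_0})=|V|-s-1$, where $|V|$ is the common vertex set of $G$ and $D$, and one also checks that every source vertex of $D$ (namely each vertex of the $X_i$) indeed carries weight $1$, as required by the setup.

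Finally I would simply invoke Corollary \ref{cor6} applied to $D$: its conclusions $\mbox{pd}\,(I(D)^t)=|V|-s-1$ and $\mbox{depth}\,(I(D)^t)=s+1$ for all $t\geq t_0$ translate verbatim, through the identity $I(D)^t=I(G)^t$, into statements (1) and (2) for $G$. I expect no genuine obstacle here: the only point requiring verification is the identification $I(D)=I(G)$, after which both assertions are immediate from Corollary \ref{cor6}, which is exactly why the statement is labelled an immediate consequence.
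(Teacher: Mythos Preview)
Your proposal is correct and matches the paper's approach exactly: the paper states this corollary as an immediate consequence of Corollary~\ref{cor6} and gives no further proof, and your reduction---orienting each $G_i$ as in Definition~\ref{def2}, setting all weights equal to $1$ so that $I(D)=I(G)$, and then invoking Corollary~\ref{cor6}---is precisely the intended argument.
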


\medskip
The following example shows that  if there exists some $t$ such that $\mbox{pd}\,(I(D)^t)=|V(D)|-s-1$,  we can not obtain  every $D_i$ is a  complete bipartite graph in Corollary \ref{cor6}.
\begin{Example}  \label{example1}
Let $D=D_{1}\coprod D_{2}$ be   disjoint union of two weighted oriented gap-free bipartite graphs, where $D_1$ (resp. $D_2$) is a
 digraph with $E(D_1)=\{x_1y_1, x_1y_2, x_2y_1, x_2y_2\}$ (resp. $E(D_2)=\{x_3y_3, x_4y_3, x_4y_4\}$). Thus $D_1$ (resp. $D_2$) is a complete bipartite graph
( resp. is not a complete bipartite graph).
The  weight function  of $D$ is: $w(x_1)=\cdots=w(x_4)=1$ and $w(y_1)=\cdots=w(y_4)=2$. Then the edge ideal of $D$ is
$I(D)=(x_1y_1^{2},x_1y_2^{2},x_2y_1^{2},x_2y_2^{2}, x_3y_3^{2},x_4y_3^{2},x_4y_4^{2})$. By using CoCoA, we obtain $\mbox{pd}\,(I(D)^2)=5=8-2-1$.
\end{Example}

\medskip
Next, we are  ready to prove another major result of this section.
\begin{Theorem}\label{thm4}
Let $D=(V(D),E(D),w)$ be a weighted oriented bipartite graph as  Theorem \ref{thm3}. Then  for any $t\geq 1$,
$$\mbox{reg}\,(I(D)^t)=\sum\limits_{x\in V(D)}w(x)-|V(D)|+s+1+(t-1)(w+1),$$
where $w=\mbox{max}\,\{w(x)\mid x\in V(D)\}$.
\end{Theorem}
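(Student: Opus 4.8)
The plan is to keep the estimate of Theorem \ref{thm3}(1) as the upper bound and to prove the matching lower bound $\mbox{reg}\,(I(D)^t)\ge R$, where I abbreviate $R:=\sum_{x\in V(D)}w(x)-|V(D)|+s+1+(t-1)(w+1)$. If every $D_i$ is a star graph, the asserted equality is already Lemma \ref{lem11}(1), so I may assume some component, say $D_1$ with bipartition $X_1=\{x_1,\dots,x_\ell\}$, $Y_1=\{y_1,\dots,y_m\}$, is not a star, i.e.\ $\ell,m\ge 2$. I would induct on $|V(D)|$ and reuse the short exact sequence $(\ddag)$ from the proof of Theorem \ref{thm3}, namely $0\to S/(I(D)^t:y_1^{w_{y_1}})(-w_{y_1})\to S/I(D)^t\to S/(I(D)^t,y_1^{w_{y_1}})\to 0$, but now extracting an \emph{equality} from Lemma \ref{lem5}(1) instead of an inequality.

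The key step is to make the outer two terms sharp. For the quotient term, recall that in the derivation of formula (1) in the proof of Theorem \ref{thm3} the only inequality came from replacing $(t-1)(w'+1)$ by $(t-1)(w+1)$, where $w'=\mbox{max}\,\{w(x)\mid x\in V(D\setminus P)\}$ and $P=\{y_1\}\cup\{x\in X_1\mid d(x)=1\}$; thus, provided $y_1$ is chosen so that $w'=w$, that step becomes an equality and the inductive hypothesis applied to $D\setminus P$ (again a disjoint union of $s$ graphs of the required type, since $x_\ell\notin P$) yields $\mbox{reg}\,(I(D)^t,y_1^{w_{y_1}})=R$ exactly, hence $\mbox{reg}\,(S/(I(D)^t,y_1^{w_{y_1}}))=R-1$ by Lemma \ref{lem1}(2). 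For the sub term, formula (4) in the proof of Theorem \ref{thm3} already gives $\mbox{reg}\,(I(D)^t:y_1^{w_{y_1}})\le R-w_{y_1}$, so after the shift and passage to $S/(\cdot)$ one gets $\mbox{reg}\,(S/(I(D)^t:y_1^{w_{y_1}})(-w_{y_1}))\le R-1$. Writing $A,B,C$ for the three modules of $(\ddag)$, I then have $\mbox{reg}\,(A)-1\le R-2\neq R-1=\mbox{reg}\,(C)$, which is precisely the equality hypothesis of Lemma \ref{lem5}(1); it forces $\mbox{reg}\,(B)=\max\{\mbox{reg}\,(A),\mbox{reg}\,(C)\}=R-1$, i.e.\ $\mbox{reg}\,(I(D)^t)=R$, completing the induction.

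The delicate point, and the place I expect the real difficulty, is the requirement $w'=w$: since $P$ meets $Y$ only in $y_1$, the maximum weight is preserved unless $y_1$ happens to be the \emph{unique} vertex of weight $w$. I would avoid the clash by exploiting that the common neighbours $N_D^{+}(x_1)=\{y_1,\dots,y_{k_1}\}$ all have degree $\ell$ by Lemma \ref{lem6} and are therefore interchangeable, or by switching to a different non-star component; this works whenever $w$ is attained at least twice, or whenever the maximizer is not the sole common neighbour of some minimal source. The genuinely obstructive configuration is a single non-star component whose minimal (degree-one) source has the unique maximum-weight vertex as its only neighbour. In that case I would not delete the sink at all, but instead peel the weight-one source $x_1$ via Lemma \ref{lem4}(2), which leaves $w$ untouched, or revert to induction on $t$, using Corollary \ref{cor4} to identify $(I(D)^t:x_1y_1^{w_{y_1}})$ with $I(D)^{t-1}$ and feeding the bound back through the same sequence; verifying that one of these substitutes still produces the clean equality case of Lemma \ref{lem5}(1) is the part that will require the most care.
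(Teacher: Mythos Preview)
Your strategy and the paper's strategy differ in which variable is peeled first. The paper peels the \emph{source} $x_1$ (a weight-$1$ vertex) rather than the sink $y_1^{w_{y_1}}$: it applies the short exact sequence
\[
0\longrightarrow \frac{S}{(I(D)^t:x_1)}(-1)\longrightarrow \frac{S}{I(D)^t}\longrightarrow \frac{S}{(I(D)^t,x_1)}\longrightarrow 0,
\]
computes $\mbox{reg}\,((I(D)^t,x_1))=R$ by induction on $|V(D)|$, and then bounds $\mbox{reg}\,((I(D)^t:x_1))$ by running a further chain of short exact sequences in the variables $y_{k_1}^{w_{y_{k_1}}},\ldots,y_1^{w_{y_1}}$, using Corollary~\ref{cor4} to identify each colon with a power $I(D\setminus V_i)^{t-1}$. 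The payoff is that deleting $x_1$ never touches the maximum weight $w$ (sources carry weight $1$), so the ``$w'=w$'' obstruction you isolate simply never arises; the argument is uniform, with no case split on the location of the maximizer. Your primary route---reusing the $y_1$-sequence from Theorem~\ref{thm3} and forcing equality in Lemma~\ref{lem5}(1)---is valid whenever $w'=w$, and your diagnosis of the obstructive configuration (a unique maximum-weight sink which is also the sole neighbour of a degree-one source in the only non-star component) is accurate.

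The gap is exactly the one you flag: in that obstructive configuration the quotient term drops to $R-(t-1)(w-w')<R$, the equality case of Lemma~\ref{lem5}(1) no longer gives the lower bound, and you must fall back to peeling $x_1$. But once you peel $x_1$ you still have to control $\mbox{reg}\,((I(D)^t:x_1))$, and Lemma~\ref{lem4}(2) alone is not enough (it only gives a disjunction). What actually closes the argument is precisely the paper's device: one more short exact sequence in $y_1^{w_{y_1}}$ applied to $K_1=(I(D)^t:x_1)$, together with $(K_1:y_1^{w_{y_1}})=I(D)^{t-1}$ from Corollary~\ref{cor4} and induction on $t$, yielding $\mbox{reg}\,(K_1)\le R-1$ and hence the clean equality case. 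So your fallback is correct, but carrying it out \emph{is} the paper's proof (specialised to $k_1=1$). In effect your proposal splits into ``generic case via the $y_1$-sequence, edge case via the $x_1$-sequence,'' whereas the paper runs the $x_1$-sequence throughout and thereby avoids the split.
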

\begin{proof} If every $D_{i}$ is a  star graph, then the  results hold by Lemma \ref{lem11}. Otherwise, there exists some  $D_{i}$ which is not a star graph. Say $i=1$ and $D_{1}$ has bipartition  $X_1=\{x_1,\ldots,x_{\ell}\}$,  $Y_1=\{y_{1},\ldots,y_{m}\}$ with
 the degree of $x_{1}$ being the smallest in $X_1$. Thus $\ell,m\geq 2$.
We apply induction on  $t$ and $|V(D)|$. Case $t=1$ follows from  Corollary \ref{cor3}. Now assume that $t\geq 2$.
By Lemma \ref{lem2} (2) and induction hypothesis on $|V(D)|$, we obtain
\begin{eqnarray*}
 \mbox{reg}\,((I(D)^t,x_1))&=&\mbox{reg}\,((I(D\setminus x_1)^t,x_1))=\mbox{reg}\,(I(D\setminus x_1)^t)+\mbox{reg}\,((x_1))-1\\
\!\!\!\!&=&\!\!\!\!\sum\limits_{x\in V(D\setminus x_1)}w(x)-|V(D\setminus x_1)|+s+1+(t-1)(w+1)\\
\!\!\!\!&=&\!\!\!\!\sum\limits_{x\in V(D)}w(x)-|V(D)|+s+1+(t-1)(w+1).\hspace{2.3cm} (1)
\end{eqnarray*}
Using  the short exact sequence
$$  0  \longrightarrow  \frac{S}{(I(D)^t:x_1)}(-1)  \stackrel{ \cdot x_1} \longrightarrow \frac{S}{I(D)^t} \longrightarrow  \frac{S}{(I(D)^t,x_1)}  \longrightarrow  0,  \eqno(\ddag)$$
we need to  calculate $\mbox{reg}\,((I(D)^t:x_1)(-1))$ in order to compute $\mbox{reg}\,(I(D)^t)$. Let
 $N_D^{+}(x_1)=\{y_1,\ldots,y_{k_1}\}$, $K_{k_1}=(I(D)^t:x_1)$ and $K_i=(I(D)^t:x_1)+\sum\limits_{j=i+1}^{k_1}(y_j^{w_{y_{j}}})$ for $0\leq i\leq {k_1-1}$, then we have the  short exact sequences
\begin{gather*}
\begin{matrix}
 0 & \longrightarrow & \frac{S}{(K_{k_1}:y_{k_1}^{w_{y_{k_1}}})}(-w_{y_{k_1}}) & \stackrel{ \cdot y_{k_1}^{w_{y_{k_1}}}} \longrightarrow & \frac{S}{K_{k_1}} &\longrightarrow & \frac{S}{K_{k_1-1}} & \longrightarrow & 0  &\\
 0 & \longrightarrow & \frac{S}{(K_{k_1-1}:y_{{k_1}-1}^{w_{y_{{k_1}-1}}})}(-w_{y_{{k_1}-1}}) & \stackrel{ \cdot y_{{k_1}-1}^{w_{y_{{k_1}-1}}}} \longrightarrow & \frac{S}{K_{k_1-1}} &\longrightarrow & \frac{S}{K_{{k_1}-2}} & \longrightarrow & 0    \\
      &  &\vdots&  &\vdots&  &\vdots&  & &\hspace{1.0cm} (\ddagger\ddagger) \\
 0 & \longrightarrow & \frac{S}{(K_{1}:y_1^{w_{y_1}})}(-w_{y_1}) & \stackrel{ \cdot y_1^{w_{y_1}}} \longrightarrow & \frac{S}{K_{1}} &\longrightarrow & \frac{S}{K_0} & \longrightarrow & 0.
 \end{matrix}
\end{gather*}
Let  $V_{k_1}=\emptyset$ and $V_{i}=\{y_{i+1},\ldots,y_{k_1}\}$ for $0\leq i\leq {k_1-1}$, then $K_{0}=I(D\setminus V_0)^t+\sum\limits_{j=1}^{k_1}(y_j^{w_{y_{j}}})$, $K_i=(I(D\setminus V_i)^t:x_1)+\sum\limits_{j=i+1}^{k_1}(y_j^{w_{y_{j}}})$  and $y_1\notin V_i$ for $i\in [{k_1}]$. It follows that $D\setminus V_i$ has $s$ connected components for  $i\in [{k_1}]$. We distinguish into the following two steps:

Step 1. We  will compute  $\mbox{reg}\,((K_{i}:y_{i}^{w_{y_{i}}})(-w_{y_{i}}))$ for $i\in [k_1]$.

Notice that $(K_{i}:y_{i}^{w_{y_{i}}})=(I(D\setminus V_i)^t:x_1y_{i}^{w_{y_{i}}})+\sum\limits_{j=i+1}^{k_1}(y_j^{w_{y_{j}}})
=I(D\setminus V_i)^{t-1}+\sum\limits_{j=i+1}^{k_1}(y_j^{w_{y_{j}}})$
by Corollary \ref{cor4}. For any $i\in [k_1]$, by Lemma \ref{lem2} (2) and induction hypothesis on $|V(D)|$, we obtain
\begin{eqnarray*}
& &\mbox{reg}\,((K_{i}:y_{i}^{w_{y_{i}}}))\!=\!\mbox{reg}\,(I(D\setminus V_i)^{t-1}+\sum\limits_{j=i+1}^{k_1}(y_j^{w_{y_{j}}}))\\
&=&\mbox{reg}\,(I(D\setminus V_i)^{t-1})+\sum\limits_{j=i+1}^{k_1}\mbox{reg}\,((y_j^{w_{y_j}}))-(k_1-i)\\
\!\!\!\!&=&(\!\!\!\!\sum\limits_{x\in V(D\setminus V_i)}\!\!\!\!\!\!w(x)-|V(D\setminus V_i)|+s+1+(t-2)(w'+1))
+\!\!\sum\limits_{j=i+1}^{k_1}\!w_{y_j}-(k_1-i)\\
\!\!\!\!&=&\!\!\!\!\sum\limits_{x\in V(D)}\!\!\!\!w(x)-(|V(D)|-(k_1-i))+s+1+(t-2)(w'+1)-(k_1-i)\\
\!\!\!\!&=&\!\!\!\!\sum\limits_{x\in V(D)}\!\!\!\!w(x)-|V(D)|+s+1+(t-2)(w'+1),
\end{eqnarray*}
where $w'=\mbox{max}\,\{w(x)\mid x\in V(D\setminus V_i)\}$.

By the choice of $w$, we have
$$\mbox{reg}\,((K_{i}:y_{i}^{w_{y_{i}}})(-w_{y_{i}}))\leq \sum\limits_{x\in V(D)}\!\!\!\!w(x)-|V(D)|+s+1+(t-1)(w+1)-1.\eqno(2)$$

Step 2. We  will compute  $\mbox{reg}\,(K_{k_1})$.

By direct calculation, we get
\[
K_0=\left\{\begin{array}{ll}
B\ \ &\text{if}\  \ k_1=m,\ s=1\\
A+B \ &\text{otherwise}\\
\end{array},\right.
\]
where $A=I(D\setminus V_0)^t=\left\{\begin{array}{ll}
I(\coprod\limits_{i=2}^{s}D_{i})^t\ \ &\text{if}\  \ k_1=m, \ s\geq2\\
I(D\setminus V_0)^t\ \ &\text{if}\  \ k_1<m\\\end{array}\right.$
 and $B=\sum\limits_{j=1}^{k_1}(y_j^{w_{y_j}})$.

By induction hypothesis on $|V(D)|$, we obtain
\[
\mbox{reg}\,(A)=\left\{\begin{array}{ll}
C+s\ \ &\text{if}\  \ k_1=m, \ s\geq 2\\
C+s+1\ \ &\text{if}\  \ k_1<m, \ s\geq 1\\
\end{array}\right.,
\]
where $C=\sum\limits_{x\in V(D\setminus V_0)}\!\!\!\!\!\!w(x)-|V(D\setminus V_0)|+(t-1)(w''+1)
=\sum\limits_{x\in V(D)}\!\!\!w(x)-|V(D)|+k_1-\sum\limits_{j=1}^{k_1}w_{y_j}$
and  $w''=\mbox{max}\,\{w(x)\mid x \in V(D\setminus V_0)\}$.
By Lemma \ref{lem2} (2), it follows that
\begin{eqnarray*}
\mbox{reg}\,(K_0)&=&\left\{\begin{array}{ll}
\mbox{reg}\,(\sum\limits_{j=1}^{m}(y_j^{w_{y_j}}))\ \ &\text{if}\  \ k_1=m, \ s=1,\\
\mbox{reg}\,(A)+\mbox{reg}\,(\sum\limits_{j=1}^{k_1}(y_j^{w_{y_j}}))-1\ \  &\text{otherwise}.\\
\end{array}\right.\\
&=&\left\{\begin{array}{ll}
\sum\limits_{j=1}^{m}w_{y_j}-(m-1)\ \ &\text{if}\  \ k_1=m, \ s=1,\\
C+s+\sum\limits_{j=1}^{m}w_{y_j}-(m-1)-1\ \  &\text{if}\  \ k_1=m, \ s\geq 2,\\
C+s+1+\sum\limits_{j=1}^{k_1}w_{y_j}-(k_1-1)-1\ \  &\text{if}\  \ k_1<m.\\
\end{array}\right.
\end{eqnarray*}
Hence, if $k_1<m$ and $w''=w$, then $\mbox{reg}\,(K_0)=\sum\limits_{x\in V(D)}\!\!\!w(x)\!-\!|V(D)|+s+1+(t-1)(w+1)$; otherwise,
$\mbox{reg}\,(K_0)=\sum\limits_{x\in V(D)}\!\!\!w(x)\!-\!|V(D)|+s+1+(t-1)(w+1)-1$.

Using  Lemma \ref{lem1} (2), Lemma \ref{lem5} (1),  and the short exact sequences ($\ddag\ddag$),  we obtain
if $k_1<m$ and $w''=w$, then $\mbox{reg}\,(K_{k_1})=\mbox{reg}\,((I(D)^t:x_1))=\sum\limits_{x\in V(D)}\!\!\!w(x)\!-\!|V(D)|+s+1+(t-1)(w+1)$;
otherwise, $\mbox{reg}\,(K_{k_1})=\mbox{reg}\,((I(D)^t:x_1))\leq \sum\limits_{x\in V(D)}\!\!\!w(x)\!-\!|V(D)|+s+1+(t-1)(w+1)-1$.

Finally,  using Lemma \ref{lem1} (2), Lemma \ref{lem4} (2), Lemma \ref{lem5} (1), Theorem \ref{thm3},  formulas  (1), (2),  the formula of $\mbox{reg}\,(K_{k_1})$ and the short exact sequences ($\ddagger$), we obtain
$$\mbox{reg}\,(I(D)^t)=\mbox{reg}\,((I(D)^t,x_1))=\sum\limits_{x\in V(D)}w(x)-|V(D)|+s+1+(t-1)(w+1).$$
We complete the proof.
\end{proof}

\medskip
From Corollary \ref{cor3} and Theorem \ref{thm4}, we have
\begin{Corollary}\label{cor8}
Let $D=(V,E,w)$ be a weighted oriented  bipartite  graph as Theorem \ref{thm4}. Then
$$\mbox{reg}\,(I(D)^{t})=\mbox{reg}\,(I(D))+(t-1)(w+1)\hspace{1cm}  \mbox{for any}\ \ t\geq 1,$$
where $w=\mbox{max}\,\{w(x)\mid x \in V\}$.
\end{Corollary}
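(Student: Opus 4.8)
The plan is to combine the two closed-form expressions that have already been established, since the desired identity is purely arithmetic once both sides are written out explicitly. First I would invoke Theorem \ref{thm4}, which applies verbatim because $D$ here is exactly the weighted oriented bipartite graph considered there; this gives, for every $t\geq 1$,
$$\mbox{reg}\,(I(D)^{t})=\sum\limits_{x\in V}w(x)-|V|+s+1+(t-1)(w+1),$$
where I have used $V=V(D)$ and hence $|V|=|V(D)|$.

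Next I would read off the constant term. The quantity $\sum_{x\in V}w(x)-|V|+s+1$ is precisely the value of $\mbox{reg}\,(I(D))$ supplied by Corollary \ref{cor3}. Substituting this back into the formula above yields
$$\mbox{reg}\,(I(D)^{t})=\mbox{reg}\,(I(D))+(t-1)(w+1)$$
for all $t\geq 1$, which is the claim. One should note that the case $t=1$ is consistent, since then the extra term $(t-1)(w+1)$ vanishes and both sides reduce to $\mbox{reg}\,(I(D))$.

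There is essentially no obstacle here: the entire content of the statement is carried by Theorem \ref{thm4} (the hard analytic input, proved by the induction on $t$ and $|V(D)|$ via the short exact sequences) and by Corollary \ref{cor3} (the base-case regularity), and the corollary merely repackages the affine-linear formula of Theorem \ref{thm4} so that its constant coefficient is displayed as $\mbox{reg}\,(I(D))$ rather than as the combinatorial expression $\sum_{x\in V}w(x)-|V|+s+1$. The only point requiring a word of care is the bookkeeping identification $V=V(D)$ and the verification that the hypotheses of both Theorem \ref{thm4} and Corollary \ref{cor3} are literally the standing hypotheses of the present corollary, so that both formulas may be quoted without further argument.
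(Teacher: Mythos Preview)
Your proposal is correct and matches the paper's own argument exactly: the paper simply states that the corollary follows from Corollary~\ref{cor3} and Theorem~\ref{thm4}, and you have written out precisely that substitution.
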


\medskip
A {\it matching} in a graph $G$ is a subgraph consisting of pairwise disjoint edges. The
largest size of a matching in $G$ is called its {\it matching number} and denoted by $c(G)$. If the subgraph is an induced subgraph, the matching
is an {\it induced matching}. The largest size of an induced matching in $G$ is called its
{\it induced matching number} and denoted by $\nu(G)$.

As a consequence of Corollary \ref {cor5} and Theorem \ref{thm4}, we have the following corollary.
\begin{Corollary}\label{cor9}
Let $s$ be a positive integer,  $G_1,\ldots,G_s$  be $s$ disjoint gap-free bipartite graphs. Let  $G=\coprod\limits_{i=1}^{s}G_{i}$ be  their disjoint union, where  every $G_i$  has  bipartition $X_i$, $Y_i$ and $b_i=\mbox{max}\,\{|X_i|,|Y_i|\}$. If  $\nu(G)$ denotes the induced matching number of $G$,
then for any $t\geq 1$
\begin{itemize}
\item[(1)] $\mbox{reg}\,(I(G)^t)=2t+\nu(G)-1$,
\item[(2)] $\sum\limits_{i=1}^{s}b_i-1\leq\mbox{pd}\,(I(G)^t)\leq|V(G)|-s-1$,
\item[(3)] $s+1\leq\mbox{depth}\,(I(G)^t)\leq |V(G)|-\sum\limits_{i=1}^{s}b_i+1$.
\end{itemize}
 Furthermore,  $\mbox{pd}\,(I(G)^t)$ and $\mbox{depth}\,(I(G)^t)$  attain the upper and lower bounds respectively if each  $G_{i}$ is a complete bipartite graph and  the  lower and upper bounds of $\mbox{pd}\,(I(G)^t)$ and  $\mbox{depth}\,(I(G)^t)$ are the same respectively if  each  $G_{i}$ is a star graph.
\end{Corollary}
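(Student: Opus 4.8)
The plan is to realize the undirected disjoint union $G=\coprod_{i=1}^{s}G_i$ as a weighted oriented graph to which Theorem \ref{thm4} and Corollary \ref{cor5} already apply, and then simply read off the three statements. Concretely, I would assign the trivial weight $w(x)=1$ to every vertex $x\in V(G)$ and orient each edge of $G_i$ away from $X_i$, obtaining a weighted oriented gap-free bipartite graph $D$ of exactly the type fixed in Definition \ref{def2}. Since every vertex has weight $1$, the edge ideal $I(D)=(x_ix_j^{w_{x_j}}\mid x_ix_j\in E)$ coincides with the ordinary edge ideal $I(G)=(x_ix_j\mid x_ix_j\in E(G))$, so all the invariants of $I(D)^t$ computed earlier are precisely the invariants of $I(G)^t$.

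For part (1), I would substitute $w(x)\equiv 1$ (so that $w=\max\{w(x)\}=1$ and $\sum_{x\in V(G)}w(x)=|V(G)|$) into the formula of Theorem \ref{thm4}, giving
\[
\reg(I(G)^t)=|V(G)|-|V(G)|+s+1+(t-1)(1+1)=2t+s-1.
\]
It then remains to identify the constant $s$ with the induced matching number $\nu(G)$. This is the one genuinely combinatorial point. Each $G_i$ is gap-free, which by the characterization recalled in the introduction means $G_i$ contains no two vertex-disjoint edges as an induced subgraph, i.e. no induced matching of size two; since $G_i$ has at least one edge, $\nu(G_i)=1$. Because the $G_i$ are pairwise vertex-disjoint with no edges between distinct components, a set of edges is an induced matching of $G$ if and only if its restriction to each $G_i$ is an induced matching of $G_i$, so $\nu$ is additive over the disjoint union and $\nu(G)=\sum_{i=1}^{s}\nu(G_i)=s$. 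Substituting $s=\nu(G)$ yields $\reg(I(G)^t)=2t+\nu(G)-1$.

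Parts (2) and (3) then follow directly from Corollary \ref{cor5} applied to $D$: the projective-dimension bounds are literally the inequalities of Corollary \ref{cor5}(1), and the depth bounds of Corollary \ref{cor5}(2) are obtained from them through the Auslander--Buchsbaum formula $\pd_S(I(G)^t)+\depth_S(I(G)^t)=|V(G)|$. The same corollary records that when each $G_i$ is complete bipartite the extreme values $\pd(I(G)^t)=|V(G)|-s-1$ and $\depth(I(G)^t)=s+1$ are attained, and that when each $G_i$ is a star graph one has $\sum_{i=1}^{s}b_i=|V(G)|-s$, which collapses the lower and upper bounds into a single common value; I would simply transcribe these observations.

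The only step that is not a direct specialization of the earlier results is the equality $\nu(G)=s$, and even this is routine once one invokes the gap-free $=$ no-induced-$2K_2$ description together with additivity of the induced matching number over disjoint unions. Consequently I expect no serious obstacle here, the remaining work being essentially the bookkeeping of the weight-one substitution and the translation of Corollary \ref{cor5} from the weighted oriented setting to the unweighted undirected graph $G$.
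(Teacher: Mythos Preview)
Your proposal is correct and follows essentially the same approach as the paper: orient each $G_i$ away from $X_i$, assign the trivial weight $w\equiv 1$ so that $I(D)=I(G)$, and then specialize Theorem \ref{thm4} and Corollary \ref{cor5}. Your justification of $\nu(G)=s$ is in fact more explicit than the paper's, which simply asserts it ``by the definition of $\nu(G)$''.
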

\begin{proof} By the definition of $\nu(G)$, we have $\nu(G)=s$. Let $D=\coprod\limits_{i=1}^{s}D_{i}$ be a  weighted oriented graph such that the underlying graph  of $D_i$ is $G_i$. If $w(x)=1$ for any  $x\in V(D)$, then $I(G)=I(D)$. Hence the results  follow  from Theorem \ref{thm4} and Corollary \ref{cor5}.
 \end{proof}

\medskip
It is well known that  for a simple graph $G$, we have $\mbox{reg}\,(I(G)^t)\geq 2t+\nu(G)-1$ for all $t\geq 1$ (see \cite[Theorem 4.15]{B1}). By Corollary \ref{cor9}, we obtain that  $\mbox{reg}\,(I(G)^t)$ reaches this lower bounds  if $G$ is the disjoint union of some  gap-free  bipartite graphs.

\medskip
The following three examples show that   regularity and  projection dimension of  powers of edge ideals of    graphs as Corollary \ref{cor2}, Theorem \ref{thm4} and Corollary \ref{cor5} are related to
direction selection.
\begin{Example}  \label{example2}
Let $I(D)=(x_1y_1^2,y_2x_1^2,y_1x_2^2,x_2y_2^2)$  be the edge ideal of a weighted oriented complete bipartite graph with  weight  $w(x_1)=w(x_2)=w(y_1)=w(y_2)=2$, thus we have  $w=\mbox{max}\,\{w(x)\mid x \in V(D)\}=2$. By using CoCoA, we obtain $\mbox{reg}\,(I(D))=5$, $\mbox{reg}\,(I(D)^2)=8$ and $\mbox{pd}\,(I(D))=\mbox{pd}\,(I(D)^2)=3$. But we have $\mbox{reg}\,(I(D))=\sum\limits_{x\in V(D)}w(x)-|V(D)|+2=6$, $\mbox{reg}\,(I(D)^2)=\sum\limits_{x\in V(D)}w(x)-|V(D)|+2+(w+1)=9$  by Theorem \ref{thm4} and
$\mbox{pd}\,(I(D))=\mbox{pd}\,(I(D)^2)=|V(D)|-2=2$ by Corollary \ref{cor5}.
\end{Example}

\begin{Example}  \label{example3}
Let $D=D_{1}\coprod D_{2}$ be the  disjoint union of two weighted oriented gap-free bipartite graphs, where $E(D_1)=\{y_1x_1, x_2y_1, x_2y_2\}$ and $E(D_2)=\{x_3y_3, y_3x_4, x_4y_4, x_5y_3, y_4x_5\}$. Then $r_{D_1}=d(x_1)+d(y_1)-2=1$ and $r_{D_2}=d(x_3)+d(y_3)-2=2$.
The  weight function  of $D$ is: $w(x_1)=\cdots=w(x_5)=1$ and $w(y_1)=\cdots=w(y_4)=2$. Then the edge ideal of $D$ is
$I(D)\!=\!(y_1x_1, x_2y_1^2, x_2y_2^2, x_3y_3^2, y_3x_4, x_4y_4^2,\\ x_5y_3^2, y_4x_5)$. By using CoCoA, we obtain $\mbox{reg}\,(I(D))=6$, $\mbox{reg}\,(I(D)^2)=9$ and $\mbox{pd}\,(I(D))=5$. But we have $\mbox{reg}\,(I(D))=\sum\limits_{x\in V(D)}w(x)-|V(D)|+2+1=7$, $\mbox{reg}\,(I(D)^2)=\sum\limits_{x\in V(D)}w(x)-|V(D)|+2+1+(w+1)=10$  by Theorem \ref{thm4} and
$\mbox{pd}\,(I(D))=r_{D_1}+r_{D_2}+2-1=4$ by Corollary \ref{cor2}.
\end{Example}

\begin{Example}  \label{example4}
Let $I(D)=(x_1y_1^2, y_1x_2, x_2y_2^2, x_3y_1^2, y_2x_3^2)$ be the edge ideal of a weighted oriented gap-free bipartite with weight  $w(x_1)=w(x_2)=1$ and $w(x_3)=w(y_1)=w(y_2)=2$. By using CoCoA, we obtain $\mbox{pd}\,(I(D)^3)=4$. But we have
$\mbox{pd}\,(I(D)^3)\leq|V(D)|-2=3$ by Corollary \ref{cor5}.
\end{Example}

\medskip

\hspace{-6mm} {\bf Acknowledgments}

 \vspace{3mm}
\hspace{-6mm}  This research is supported by the National Natural Science Foundation of China (No.11271275) and  by foundation of the Priority Academic Program Development of Jiangsu Higher Education Institutions.

\end{document}